\documentclass{amsart}

\usepackage{latexsym,amsmath,amssymb,amsfonts,amscd,graphics}
\usepackage{blindtext}
\usepackage[left=3cm,right=3cm]{geometry}
\usepackage[mathscr]{eucal}
\usepackage{mathrsfs}
\usepackage{tikz-cd}
\usepackage{hyperref, cleveref}
\usepackage{thmtools}
\hypersetup{
	colorlinks=true,
	linkcolor=blue,
	filecolor=blue,      
	urlcolor=cyan,
	citecolor=magenta}


\numberwithin{equation}{section}
\theoremstyle{plain}
\newtheorem{theorem}[equation]{Theorem}

\newtheorem{lemma}[equation]{Lemma}
\newtheorem{proposition}[equation]{Proposition}
\newtheorem{corollary}[equation]{Corollary}

\theoremstyle{remark}
\newtheorem{remark}[equation]{Remark}

\theoremstyle{definition}
\newtheorem{definition}[equation]{Definition}

\newtheorem{example}[equation]{Example}

\newcommand{\bP}{\mathbb{P}}
\newcommand{\bA}{\mathbb{A}}

\newcommand{\bQ}{\mathbb{Q}}
\newcommand{\bZ}{\mathbb{Z}}
\newcommand{\bF}{\mathbb{F}}
\newcommand{\bC}{\mathbb{C}}

\newcommand{\bL}{\mathbb{L}}

\newcommand{\calC}{\mathcal{C}}

\newcommand{\calT}{\mathcal{T}}

\newcommand{\calO}{\mathcal{O}}

\newcommand{\calI}{\mathcal{I}}
\newcommand{\calX}{\mathcal{X}}

\newcommand{\calJ}{\mathcal{J}}

\newcommand{\Sym}{\mathrm{Sym}}

\newcommand{\Proj}{\mathrm{Proj}}

\newcommand{\Sing}{\mathrm{Sing}}

\newcommand{\Gr}{\mathrm{Gr}}

\newcommand{\corank}{\mathrm{corank}}
\newcommand{\rank}{\mathrm{rank}}

\newcommand{\Pic}{\mathrm{Pic}}

\newcommand{\git}{/\kern-0.2em/}

\newcommand{\van}{\mathrm{van}}

\newcommand{\defect}{\sigma(X)}
\newcommand{\tD}{\widetilde{D}_l}

\DeclareMathOperator{\red}{red}

\author{Lisa Marquand and Sasha Viktorova}
\title[The defect of a cubic threefold]{The defect of a cubic threefold and applications to  intermediate Jacobian fibrations} 
\date{\today}


\begin{document}

\begin{abstract}
The defect of a cubic threefold $X$ with isolated singularities is a global invariant that measures the failure of $\bQ$-factoriality. We compute the defect for such cubics in terms of topological data about the curve of lines through a singular point. We express the mixed Hodge structure on the middle cohomology of $X$ in terms of both the defect and local invariants of the singularities. We then relate the defect to various geometric properties of $X$: in particular, we show that a cubic threefold is not $\bQ$-factorial if and only if it contains either a plane or a cubic scroll. We relate the defect to existence of compactified intermediate Jacobian fibrations with irreducible fibers associated to a cubic fourfold.
\end{abstract}

\maketitle
\section{Introduction}

Let $X$ be a normal complex projective variety. A key notion in studying the birational geometry of such a singular variety is the condition of $\bQ$-factoriality, i.e. the condition that every Weil divisor is a $\bQ$-Cartier divisor. Let $\mathrm{Weil}(X)$ denote the free abelian group of Weil divisors on $X$, and $\mathrm{Cart}(X)$ the subgroup of Cartier divisors, both considered with $\bQ$ coefficients. One can thus detect if a variety $X$ is $\bQ$-factorial by the vanishing of the \textbf{defect} of $X$, defined as:
$$\defect:= \dim_\bQ (\mathrm{Weil}(X)/\mathrm{Cart}(X)).$$

The purpose of this paper is to study this global invariant $\defect$ when $X$ is a cubic threefold with isolated (and thus rational) singularities. The geometry of $\bQ$-factorial Fano threefolds has been well-studied (see \cite{MR2475055}, \cite{CheltsovSextic}, \cite{MR2729277}, \cite{MR3912057}), especially in the nodal case.
A cubic threefold with only ordinary double points in general position is $\bQ$-factorial if and only if the number of points is less than 4 \cite[Theorem 1.4]{CheltsovFactorial3fold}; for other combinations of singularities, the situation is more complex. The possible combinations of isolated singularities that can occur were classified in \cite{viktorova2023classification} (see also \cite{Yokoyama}, \cite{Allcock} for a GIT perspective). 

The condition for a cubic threefold $X\subset\bP^4$ to have  isolated  singularities is equivalent to $X$ appearing as a hyperplane section of a smooth cubic fourfold $V\subset \bP^5$ \cite[Lemma 4.7]{Brosnan}. Our interest in the defect of singular cubic threefolds $X\subset \bP^4$ stems from the desire to study compactified intermediate Jacobian fibrations $\pi:\mathcal{J}_V\rightarrow (\bP^5)^*$ associated to $V$, constructed in \cite{LSV} and \cite{sac2021birational} (see also \cite{DMupcoming}). These are examples of hyperk\"ahler manifolds of $OG10$ deformation type. Briefly, for a general hyperplane $H\in (\bP^5)^*$, the fiber of $\pi$ is the intermediate Jacobian of the cubic threefold $X=H\cap V$. When $V$ is general, this Lagrangian fibration has irreducible fibers, however this is no longer true without the generality assumption. Indeed, the hyperplane sections can have worse singularities than expected, and the intermediate Jacobian of $X$ can completely degenerate. In \cite{Brosnan}, Brosnan gives a sufficient condition for a fiber of $\pi$ to be reducible: the cubic threefold obtained as a hyperplane section $X=H\cap V$ has positive defect, $\sigma(X)>0.$ 

Let $X\subset \bP^4$ be a cubic threefold with isolated singularities, that is not a cone over a cubic surface. 
Our first main result establishes that the defect $\defect$ is determined by the topology of the curve of lines through a singular point of $X$.

\begin{theorem}\label{thm:main}
    Let $X\subset \bP^4$ be a cubic threefold with isolated singularities, that is not a cone over a cubic surface. Let $q\in \Sing(X)$ be a singular point, and denote by $k$ the number of irreducible components of the curve $C_q$ parametrising lines through $q$. 
    Then:
	$$\sigma(X)=\begin{cases}
		k-1 & \text{ if } \corank(q)\neq 2;\\
		k-2 & \text{ if } \corank(q) = 2. 
	\end{cases}$$
\end{theorem}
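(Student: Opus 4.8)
The plan is to compute $\dim_\bQ \mathrm{Cl}(X)_\bQ$ by resolving the linear projection from $q$, and then to use the elementary identity $\sigma(X)=\dim_\bQ\mathrm{Cl}(X)_\bQ-1$. The latter holds because $\mathrm{Weil}(X)/\mathrm{Cart}(X)\xrightarrow{\ \sim\ }\mathrm{Cl}(X)/\Pic(X)$ (two Weil divisors that are linearly equivalent differ by a principal, hence Cartier, divisor), because $\Pic(X)_\bQ=\bQ\cdot\mathcal{O}_X(1)$ for every cubic threefold with isolated singularities, and because $\mathrm{Cl}(X)_\bQ$ is finite-dimensional since the isolated singularities of $X$ are rational. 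So the whole content is to show $\dim_\bQ\mathrm{Cl}(X)_\bQ=k$ when $\corank(q)\neq 2$ and $k-1$ when $\corank(q)=2$.

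First I would set up the geometry. Write the local equation of $X$ at $q$ as $f_2+f_3=0$ in affine coordinates, let $\epsilon\colon Y\to X$ be the blow-up of $q$ (normalised if necessary), with exceptional divisor $E$ the projectivised tangent cone $\{f_2=0\}\subset \bP^3=\bP(T_q\bP^4)$, and let $\psi\colon Y\to\bP^3$ be the morphism resolving the projection $\pi_q\colon X\dashrightarrow\bP^3$ (which is already a morphism away from $q$ and is resolved by blowing up $q$). Since a line through $q$ meets $X$ with multiplicity $\geq 2$ at $q$, two distinct points of $X$ collinear with $q$ force the joining line into $X$; hence $\pi_q$ is injective away from the surface $S$ swept out by the lines of $X$ through $q$, the morphism $\psi$ contracts exactly the strict transform $\widetilde S$ of $S$, and $\psi$ restricts to a finite birational map onto the smooth variety $\bP^3\setminus C_q$, hence an isomorphism $Y\setminus\widetilde S\xrightarrow{\ \sim\ }\bP^3\setminus C_q$. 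Note also that $\psi|_E$ is the closed immersion $E=\{f_2=0\}\hookrightarrow\bP^3$ and that $C_q=\{f_2=f_3=0\}\subset\{f_2=0\}=\psi(E)$, so $E$ is \emph{not} $\psi$-exceptional. The hypothesis that $X$ is not a cone over a cubic surface is used here to ensure $\pi_q$ is dominant and $S$ is a genuine surface.

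Next I would compute $\dim_\bQ\mathrm{Cl}(Y)_\bQ$ in two ways. Via $\psi$: for a projective birational morphism of normal varieties the strict-transform map identifies $\mathrm{Cl}(Y)_\bQ$ with $\psi^*\mathrm{Cl}(\bP^3)_\bQ$ plus the free span of the $\psi$-exceptional prime divisors; by the previous paragraph these are precisely the irreducible components of $\widetilde S$, and since $\widetilde S\to C_q$ has a one-dimensional fibre (the strict transform of the unique line of $X$ in that direction) over a general point of each component of $C_q$, the components of $\widetilde S$ are in bijection with those of $C_q$. Hence $\dim_\bQ\mathrm{Cl}(Y)_\bQ=1+k$. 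Via $\epsilon$: excision along the point $q$ gives a surjection $\mathrm{Cl}(Y)\twoheadrightarrow\mathrm{Cl}(X\setminus\{q\})=\mathrm{Cl}(X)$ with kernel generated by the components of $E$, and these are independent in $\mathrm{Cl}(Y)_\bQ$ because $\mathcal{O}_Y(-E)$ is $\epsilon$-ample; thus $\dim_\bQ\mathrm{Cl}(Y)_\bQ=\dim_\bQ\mathrm{Cl}(X)_\bQ+\#\{\text{components of }E\}$. Finally $E=\{f_2=0\}$ is irreducible unless $\rank f_2=2$, i.e.\ unless $\corank(q)=2$, in which case $f_2=\ell_1\ell_2$ and $E=\Pi_1\cup\Pi_2$ is a union of two planes; so $\#\{\text{components of }E\}$ equals $1$ when $\corank(q)\neq 2$ and $2$ when $\corank(q)=2$. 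Equating the two expressions for $\dim_\bQ\mathrm{Cl}(Y)_\bQ$ and subtracting $1$ yields the stated formula, with the corank $2$ case producing $k-2$ precisely because the two tangent-cone planes supply two extra exceptional classes over $q$ that are not accounted for among the components of $C_q$.

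The main obstacle I expect is the careful bookkeeping on the (possibly very singular) threefold $Y$: verifying that $Y$, or its normalisation, is normal and that $\psi$ has no exceptional prime divisors other than the components of $\widetilde S$ — equivalently, that every curve contracted by $\pi_q$ lies on a line of $X$ through $q$ and that no component of $\widetilde S$ is itself contracted to a point — and, most delicately, that the scheme $C_q=\{f_2=f_3=0\}$ is purely one-dimensional with exactly $k$ reduced components dominated one-to-one by the components of $\widetilde S$. This is where the non-nodal singularities of $X$ intervene: the other singular points of $X$ and all the positive-dimensional fibres of $\psi$ lie over $C_q$, and one must check they neither create spurious components of $\widetilde S$ nor glue two components of $C_q$ together. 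Establishing these normality and irreducibility statements (presumably via the singularity classification and a case analysis on $\corank(q)$) is the technical heart of the argument; the rest is the two-line dimension count above.
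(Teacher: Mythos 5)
Your proposal is correct in outline but reaches the formula by a genuinely different route. The paper does not touch class groups directly: it invokes the identity $\sigma(X)=b_4(X)-b_2(X)$ of Namikawa--Steenbrink (valid for projective threefolds with isolated rational singularities), and then computes $b_4$ of $X$ by running the Mayer--Vietoris sequences of the two discriminant squares attached to the same pair of contractions you use, namely $\widetilde{X}=Bl_qX\cong Bl_{C_q}\bP^3\to X$ and $\widetilde{X}\to\bP^3$; the key steps are the projective-bundle formula for the exceptional divisor $E$ over $(C_q)_{\red}$ (giving $h^4(E)=h^2(C_q)=k$) and a weight argument showing $H^3(E)\to H^4(\bP^3)$ vanishes. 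Your version replaces this Hodge-theoretic bookkeeping with the two standard excision/pushforward computations of $\mathrm{Cl}(Bl_qX)_\bQ$, one over $\bP^3$ and one over $X$, and the count $\sigma(X)=k-\#\{\text{components of }(Q_q)_{\red}\}$ comes out identically. What your route buys is independence from the $\sigma=b_4-b_2$ theorem and from rationality of the singularities (which you invoke only for finite-dimensionality of $\mathrm{Cl}(X)_\bQ$, but that already follows from your own bound $\dim\mathrm{Cl}(Y)_\bQ=1+k$); what it costs is exactly the list of verifications you flag at the end: normality of $Y=Bl_qX$, the identification of the $\psi$-exceptional prime divisors with the components of the surface swept by lines through $q$, and their bijection with the components of $C_q$. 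These are not vacuous worries, but they are precisely what the paper's Lemma \ref{blow up iso} ($Bl_qX\cong Bl_{C_q}\bP^3$, quoted from Casalaina-Martin--Laza) packages: once you know $Y$ is the blow-up of $\bP^3$ along the $(2,3)$ complete intersection $C_q$, the exceptional locus is the projectivisation of the rank-two conormal bundle over $C_q$, hence has exactly one prime component over each component of $(C_q)_{\red}$, and normality of this hypersurface-type blow-up is a short $R_1+S_2$ check. Two small inaccuracies to repair: the independence of the components of $E$ in $\mathrm{Cl}(Y)_\bQ$ should be justified by pushing a principal divisor forward to $X$ (your appeal to $\epsilon$-ampleness of $-E$ only shows one particular combination is nonzero), and in the corank $3$ case one must say explicitly that the relevant count is of prime divisors supported on the non-reduced scheme $\{f_2=0\}$, i.e.\ of components of its reduction.
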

This allows one to compute $\defect$ and the cohomology of $X$ in terms of 
the singularities of $X$, along with their relative position. 
As a by-product, we obtain a classification of $\bQ$-factorial cubic threefolds.
\begin{remark}
    If $X$ is a cone over a cubic surface, then the lines through $q$ are parametrised by this surface rather than a curve, and the defect can be computed directly from the definition. For example, the defect for a cone over a cubic surface is 6.
\end{remark}
\begin{remark} \label{choice of singularity}
    While the curve $C_q$ depends on the choice of a singular point, the invariant $\sigma(X)$ does not. Notice that the number of components $k$ of the curve $C_q$ also depends on the choice of $q$ as the threefold $X$ can contain singularities of different coranks. For example, there exist threefolds with one $D_4$ singularity (which has corank $2$) and two $A_1$ singularities (which have corank $0$). For concrete equations of such threefolds and the computation of their defect see Example \ref{D4plus2A1}.
\end{remark}

In fact, the topology of the curve $C_q$ encodes even more about the cohomology of $X$.

\begin{theorem}\label{thm: cohomology}
Let $X$ be a cubic threefold with only isolated singularities. 
Then the Hodge-Du Bois numbers of the mixed Hodge Structure on $H^3(X)$ are completely determined by a global invariant $\defect,$ and two local invariants, namely the Du Bois and link invariants of the singularities. 

Further, if $X$ is not a cone over a cubic surface, then there is an isomorphism $\Gr_3^W H^3(X)\cong \Gr_1^W H^1(C_q)(-1)$ of pure Hodge structures.
\end{theorem}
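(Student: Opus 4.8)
The plan is to establish the two assertions separately. The first is a bookkeeping argument with the Du Bois complex together with the standard long exact sequences for the cohomology of a variety with isolated singularities; the second rests on the birational geometry of the projection from $q$.

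\emph{First assertion.} Fix a resolution $\nu\colon\widehat X\to X$ with exceptional divisor $E=\bigsqcup_i E_i$ over the points $q_i\in\Sing X$; since $X$ has a singular point, projecting from it shows $X$, hence $\widehat X$, is rational, so $h^{3,0}(\widehat X)=0$ and $H^3(\widehat X)$ is pure of weight $3$ with vanishing $(3,0)$-part. I would then combine: the exact sequence of mixed Hodge structures
$$H^2(\widehat X)\longrightarrow H^2(E)\longrightarrow H^3(X)\longrightarrow H^3(\widehat X)\longrightarrow H^3(E),$$
valid because $\Sing X$ is finite, which realises $W_2H^3(X)$ as a quotient of $H^2(E)$ and identifies $\Gr^W_3H^3(X)$ with $\mathrm{im}\bigl(H^3(X)\to H^3(\widehat X)\bigr)$; and the local structure of the Du Bois complex, namely that for isolated singularities $\underline\Omega^p_X$ differs from $\Omega^p_{\widehat X}$ only by sheaves supported on $\Sing X$ whose sizes are the Du Bois invariants of the $q_i$, while the non-pure part of $H^3(X)$ and the twists appearing in the hypercohomology spectral sequence are governed by the cohomology of the links. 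Running the spectral sequence for $\mathbb{H}^{\bullet}(X,\underline\Omega^{\bullet}_X)$ and the above exact sequence in tandem expresses every Hodge--Du Bois number $\underline h^{p,q}(X):=\dim\mathbb{H}^q(X,\underline\Omega^p_X)$ as a fixed ``smooth cubic'' contribution, plus explicit local contributions in the Du Bois and link invariants, plus the two numbers $\dim\Gr^W_2H^3(X)$ and $\dim\Gr^W_3H^3(X)$; and these last two are in turn determined by $\defect$ and the local invariants, which is essentially the content of the analysis behind Theorem~\ref{thm:main} (the defect governs the discrepancy of $b_3(X)$ from the value predicted by the local data, and the local exact sequences then pin down both graded pieces). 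Assembling everything gives closed formulas for all $\underline h^{p,q}(X)$ in the stated terms. The delicate step is to sort the many local terms so that the $E_i$-cohomology and the Milnor-fibre/link data get repackaged precisely as the ``Du Bois'' and ``link'' invariants, and to check that the maps $H^2(\widehat X)\to H^2(E_i)$ conceal no further global information.

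\emph{Second (final) assertion.} Here I would not resolve blindly but project from $q$. As $X$ is not a cone, $q$ has multiplicity two, so $\rho\colon Y:=\mathrm{Bl}_qX\to X$ has exceptional divisor the projectivised tangent cone, a quadric surface $Q\subset\bP^3=\bP(T_q\bP^4)$, and sending a line through $q$ to its residual intersection with $X$ defines a birational morphism $p\colon Y\to\bP^3$ which is an isomorphism over $\bP^3\setminus C_q$ and contracts the surface $S:=p^{-1}(C_q)$ — the union of the strict transforms of the lines $\ell\subset X$ through $q$ — onto $C_q$, the generic fibre of $S\to C_q$ being the strict transform of such a line, a $\bP^1$. (Classically $p$ realises $Y$, after normalisation, as $\mathrm{Bl}_{C_q}\bP^3$.) Now argue in three moves. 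First, because $\rho$ is an isomorphism away from $q$ and $H^3(Q)=0$ for every quadric surface, the descent (abstract blow-up) sequence for $\rho$ shows $\ker\bigl(H^3(X)\to H^3(Y)\bigr)$ is the image of $H^2(Q)$, of weight $\le2$, whence $\Gr^W_3H^3(X)\cong\Gr^W_3H^3(Y)$. Second, since $H^3(\bP^3)=0=H^3(C_q)$, the descent sequence for $p$ gives $0\to H^3(Y)\to H^3(S)\to H^4(\bP^3)$ with the last map vanishing in weight $3$, so $\Gr^W_3H^3(Y)\cong\Gr^W_3H^3(S)$. Third, a resolution $\widehat S\to S$ fibres over the normalisation $\widetilde{C_q}$ with rational generic fibre (and, for dimension reasons, fibres that are trees of rational curves), so $H^1(\widehat S)\cong H^1(\widetilde{C_q})$ and hence $H^3(\widehat S)\cong H^1(\widetilde{C_q})(-1)$; since $\Gr^W_3H^3(S)=\mathrm{im}\bigl(H^3(S)\to H^3(\widehat S)\bigr)$ this produces an injection of pure Hodge structures
$$\Gr^W_3H^3(X)\;\hookrightarrow\;H^1(\widetilde{C_q})(-1)\;=\;\Gr^W_1H^1(C_q)(-1),$$
where the reducible or non-reduced behaviour of $S\to C_q$ over the finitely many special points of $C_q$ only perturbs $H^3(S)$ in weight $\le2$ and is invisible here. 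The reverse inequality of dimensions — and hence the claimed isomorphism — follows from the defect count of Theorem~\ref{thm:main} together with the description of $C_q$ established in its proof.

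\emph{Main obstacle.} In both parts the real work is controlling the bad locus: one must verify that $p\colon Y\to\bP^3$ really behaves like the blow-up of $C_q$, and that whatever singularities $Y$ acquires — as well as the singular, non-reduced or reducible behaviour of $C_q$ — only perturb $H^3$ in weight $<3$. This is precisely the subtlety of the corank $2$ case, where $Q$ degenerates to a union of two planes and $C_q$ picks up an extra component (so that $\defect$ drops by one) which, being rational, contributes nothing to $\Gr^W_1H^1(C_q)$. To handle it I would feed in the explicit description of the singularities of $X$ and of the curve $C_q$ obtained while proving Theorem~\ref{thm:main}, so that all such exceptional contributions are seen to lie in weight $\le2$ and the dimension count closes the argument.
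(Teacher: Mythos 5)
Your second assertion follows the paper's skeleton quite closely --- the paper also passes through $Bl_qX\cong Bl_{C_q}\bP^3$ and the two discriminant squares to reduce $\Gr_3^WH^3(X)$ to the weight-$3$ part of $H^3$ of the exceptional divisor $E$ (your $S$) --- but your final step has a gap. The paper does not resolve $S$: since $C_q$ is a complete intersection, $E=\Proj(\Sym(\calI/\calI^2))$ is a $\bP^1$-bundle over $(C_q)_{\red}$, so the projective bundle formula gives $H^3(E)\cong h\cdot g^*H^1(C_q)$ as mixed Hodge structures and the isomorphism $\Gr_3^WH^3(X)\cong\Gr_1^WH^1(C_q)(-1)$ drops out with no dimension count. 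Your route only produces an injection $\Gr_3^WH^3(X)\hookrightarrow H^1(\widetilde{C_q})(-1)$, and the proposed closing move does not close: Theorem \ref{thm:main} computes $b_4(X)-b_2(X)$, i.e.\ it controls $h^2(C_q)$, and even granting the equality $h^3(X)=h^1(C_q)$ extracted from its proof, combining that with your injection on $\Gr_3^W$ only yields $\dim W_2H^3(X)\ge\dim W_0H^1(C_q)$ --- the wrong direction --- so equality of the weight-$3$ pieces does not follow without separately bounding the weight-$\le 2$ parts. You either need the bundle structure on $E$ (at which point the resolution of $S$ is superfluous) or an independent computation of $W_2H^3(X)$.

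For the first assertion the paper takes a genuinely different route: a one-parameter smoothing and the vanishing-cycle sequence
$$0\to H^3(X)\to H^3_{\lim}(X_t)\to H^3_{\van}\to H^4(X)\to H^4_{\lim}(X_t)\to 0,$$
together with Theorem \ref{milnor fiber Hodge numbers} expressing the Hodge numbers of the Milnor fibers in terms of $b^{1,1}$ and $l^{1,1}$, and the identity $\dim\ker\bigl(H^4(X)\to H^4_{\lim}(X_t)\bigr)=\sigma(X)$; this yields the closed formulas $\underline{h}^{1,2}=5-B$ and $\underline{h}^{2,1}=5-(L-\sigma)-B$. Your resolution/Du Bois-complex plan is plausible in outline, but the step you yourself flag as delicate --- repackaging the exceptional-divisor and link data precisely as the Du Bois and link invariants and isolating how $\sigma(X)$ enters --- is the entire content of the theorem and is not carried out; as written, the argument asserts rather than proves that the assembly produces numbers depending only on $\sigma(X)$, $B$ and $L$. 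To make your route work you would need a substitute for the Milnor-fiber computation of Theorem \ref{milnor fiber Hodge numbers}, which is exactly the bridge between the geometry of a resolution and the local invariants named in the statement.
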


The defect of Fano threefolds and of hypersurfaces (in particular Calabi-Yau threefolds) has been studied previously in relation to their mixed Hodge structures (\cite{Clemens}, \cite{Dimca}, \cite{NS95}, \cite{Cyn}). 

Next, we wish to apply Theorem \ref{thm:main} to understand compactified intermediate Jacobian fibrations for smooth cubic fourfolds $V\subset \bP^5$. More concretely, our goal is to determine when $V$ contains a hyperplane section $X= V\cap H$ with $\defect>0.$ 
It is clear that such a hyperplane section exists if $V$ contains a plane or a rational normal cubic scroll, as observed in \cite{MR4395101}, \cite{sac2021birational}. We prove that this is an equivalence:

\begin{theorem}\label{claim}
	Let $X\subset \bP^4$ be a cubic threefold with isolated singularities. Then the following are equivalent:
	\begin{enumerate}
		\item $\sigma(X)>0$;
		\item $X$ contains either a plane, or a rational normal cubic scroll.
	\end{enumerate}
\end{theorem}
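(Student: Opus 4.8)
The implication $(2) \Rightarrow (1)$ is easy and already in the literature: if $X$ contains a plane $P$, then $P$ is a Weil divisor that is not $\bQ$-Cartier (any hyperplane section of $X$ has degree $3$, while $P\cdot \ell = $ ... one checks $P$ cannot be a rational multiple of $\calO_X(1)$ on the relevant curve classes), so $\sigma(X) > 0$; similarly for a cubic scroll $S$, which spans a $\bP^3$ and is a divisor on $X$ that fails to be $\bQ$-Cartier. So the content is $(1) \Rightarrow (2)$.

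For $(1) \Rightarrow (2)$, the plan is to use Theorem \ref{thm:main}: if $\sigma(X) > 0$ then, picking a singular point $q$, the curve $C_q \subset \bP^3$ (or $\bP^4$) of lines through $q$ is \emph{reducible} — it has $k \geq 2$ components when $\corank(q) \neq 2$, and $k \geq 3$ components when $\corank(q) = 2$. I would first recall the classical description (Clemens--Griffiths, Mumford) of $C_q$: projecting $X$ from $q$ exhibits $X$ as a conic bundle over $\bP^3$, or more precisely realizes the blow-up $\mathrm{Bl}_q X$ as... the key fact is that $C_q$ is the $(2,3)$-complete intersection curve cut out by a quadric $Q$ and a cubic $F$ in $\bP^3$ (the second and third order terms of the local equation of $X$ at $q$), possibly with a correction when $q$ is not an ordinary double point. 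A component of $C_q$ that is a line $L \subset \bP^3$ then corresponds to a surface in $X$: the union of the lines through $q$ parametrized by $L$ sweeps out a surface $S_L \subset X$ which is either a plane (if the lines through $q$ meeting $L$ form a pencil, i.e. $L$ lies in a plane through $q$) or a cubic scroll. Conversely the presence of a reducible $C_q$ forces a line or conic component by a genus/degree count on the $(2,3)$ curve.

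The main technical step — and the main obstacle — is the \emph{case analysis on how $C_q = Q \cap F$ can split}. A plane component of $X$ corresponds to a \emph{line} in $C_q$; a cubic scroll corresponds to a \emph{conic} in $C_q$ (a conic's worth of lines through $q$ sweeps out a degree-$3$ surface, which is a scroll since it is ruled by lines through $q$). So I must show: if the $(2,3)$-complete intersection curve $C_q$ (with its possible non-reduced structure when $\corank(q) = 2$) is reducible, then it contains a component of degree $\leq 2$, i.e. a line or a conic. This is where the corank enters: when $\corank(q) = 2$ the quadric $Q$ is a rank-$\leq 2$ quadric, hence a union of two planes (or a double plane), and $C_q$ automatically contains plane curves of low degree — this is exactly why the formula loses one in that case, and why one needs $k \geq 3$ rather than $k \geq 2$ there. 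When $\corank(q) \leq 1$, $Q$ is smooth or a quadric cone, and a reducible $(2,3)$ curve on it (using that on a smooth quadric $C_q$ has bidegree $(3,3)$, arithmetic genus $4$) must, by examining the possible bidegrees of components, contain a line (bidegree $(1,0)$ or $(0,1)$) or a conic (bidegree $(1,1)$). I would organize this by the classification of singularity types from \cite{viktorova2023classification}, but ideally give a uniform argument via the quadric $Q$.

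A cleaner alternative I would pursue in parallel: argue directly with divisors. If $\sigma(X)>0$ there is a Weil divisor $D$ on $X$ not $\bQ$-Cartier; after resolving and analyzing the local class groups of the (rational) singularities, one extracts a \emph{non-Cartier effective divisor} supported on a surface $S \subset X$ that is swept out by lines through $q$. Such an $S$ is ruled by lines, hence (being a surface of degree $\leq 3$ in $\bP^4$, as it has small degree by an intersection bound with a general plane through $q$) is either a plane or a rational normal cubic scroll — the only ruled surfaces of degree $\leq 3$. I expect the bookkeeping relating "$\sigma(X)>0$" to "$\exists$ such a low-degree ruled $S$" to be the crux: one must rule out that the non-$\bQ$-Cartier-ness comes from a divisor of higher degree, and this is precisely what Theorem \ref{thm:main} buys us by pinning $\sigma(X)$ to the combinatorics of $C_q$. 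I would close by noting that conversely any plane or cubic scroll in $X$ must pass through a singular point (a smooth cubic threefold contains no plane and no cubic scroll as a \emph{divisor} — it contains cubic scrolls but they are not Weil divisors there, wait: a smooth cubic threefold is $\bQ$-factorial with Picard rank $1$, so it contains neither as a divisor obstruction), completing the equivalence.
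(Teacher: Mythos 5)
Your overall strategy for $(1)\Rightarrow(2)$ — project from a singular point $q$, invoke Theorem \ref{thm:main} to get that $C_q$ is reducible, and convert low-degree components of $C_q$ into surfaces in $X$ — is the same as the paper's. But there is a genuine gap at the step you yourself identify as the crux, and your proposed dictionary is wrong in a way that would make the argument fail. First, the dictionary: a conic component of $C_q$ does \emph{not} give a cubic scroll. The cone over a plane conic with vertex $q$ is a quadric cone of degree $2$ spanning a $\bP^3$; it yields a plane in $X$ only indirectly, as the residual component of the cubic surface $X\cap\bP^3$. A rational normal cubic scroll is smooth and spans $\bP^4$, so it is never a cone over a curve in $C_q$ at all. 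Second, and more seriously, your key claim that a reducible $(2,3)$ complete intersection curve must contain a component of degree $\le 2$ is false: on a smooth quadric the $(3,3)$ curve can split as two twisted cubics of bidegrees $(1,2)+(2,1)$, and on the quadric cone $\bF_2$ the class $2\sigma+6f$ can split as $(\sigma+3f)+(\sigma+3f)$ (Lemma \ref{classofc}). This is exactly the case that produces the cubic scroll, and it is where the real work lies: the paper handles it via determinantal representations of the cubic surface $S_q$ attached to the two twisted cubics (Lemma \ref{lem: An}, following \cite{dolgnodal} and \cite[Prop.\ 17]{flops}), and via \cite[Prop.\ 23]{flops} for six nodes in general position (Lemma \ref{lem: nodes}). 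Your proposal contains no argument covering this case, so as written it only proves the theorem when a line or conic component happens to exist.

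Your ``cleaner alternative'' via class groups also does not close the gap: the assertion that the obstruction to $\bQ$-factoriality is supported on a ruled surface of degree $\le 3$ is precisely what needs proof (the paper notes that singular cubic threefolds can contain elliptic quintic scrolls generating part of $\mathrm{Weil}/\mathrm{Cart}$, e.g.\ the Segre cubic), and ``ruled of degree $\le 3$ implies plane or cubic scroll'' is not true without further argument (quadric cones and cones over twisted cubics are ruled of degree $\le 3$). The direction $(2)\Rightarrow(1)$ is fine in spirit but should be tightened as in Lemma \ref{plane => not palindromic}: if $\sigma(X)=0$ every Weil divisor is cut out by a hypersurface, which a plane (degree $1$) and a nondegenerate cubic scroll (spanning $\bP^4$) can never be. For a genuinely different correct route, compare Remark \ref{ref remark}, which runs a $\bQ$-factorialization and the classification of Mori contractions instead of analyzing $C_q$.
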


\begin{remark}
    It is possible for a cubic threefold with isolated singularities to contain surfaces of higher degree; indeed by \cite{MR1096457}, \cite{MR1191602} such a cubic can contain an elliptic quintic scroll. This is realised by the Segre cubic for instance, see \cite[Remark 1.5]{MR1096457} for more details. However, one can always find a basis for $\mathrm{Weil}(X)/\mathrm{Cart}(X)$ using only classes of planes or cubic scrolls. By Corollary \ref{defect bound} the Segre cubic has defect equal to $5$ which is maximal among the cubic threefolds with isolated singularities that are not cones, and $\mathrm{Weil}(X)/\mathrm{Cart}(X)$ is generated by classes of planes.
\end{remark}
\begin{remark}
    It was shown in \cite{MR4531678} (see also \cite{BGM25}) that cubic fourfolds $V$ with symplectic automorphisms contain either many planes, or many cubic scrolls. Such a symplectic automorphism acts on the hyperplane sections, and thus induces a birational transformation on any compactified intermediate Jacobian fibration associated to $V$ (see \cite{marquand2023classification, marquand2023finite} for more details). It follows from Theorem \ref{claim} that their associated compactified intermediate Jacobian fibrations for such symmetric cubic fourfolds have many reducible fibers, which are the obstruction for these induced birational transformations to be regular.
\end{remark}

In \cite{CML09}, the authors introduced the notion of a \textit{good line} in a cubic threefold, in order to study degenerations of the intermediate Jacobian as $X$ becomes mildly singular (see also \cite{MR4218962}). The main tool is Mumford's description of the intermediate Jacobian as a Prym variety: for a line on the cubic, projection exhibits the cubic as a conic bundle over $\bP^2$, whose discriminant curve is a plane quintic with an associated double cover. For the choice of a good line, the singularities of the quintic are in one-to-one correspondence with those of $X$, and one can study the degeneration of the intermediate Jacobian as a Prym variety of the associated double cover, even in the singular situation \cite{MR0572974}. 
In addition, the stronger notion of a \textit{very good line} was used in \cite{LSV}: for such a line, the associated quintic double cover is \'etale and both of the curves are irreducible. We show that there exists a very good line on a cubic threefold $X$, making the Prym construction possible, only when $\defect=0$.

\begin{theorem}\label{thm: no very good lines}
Let $X\subset \bP^4$ be a cubic threefold with isolated singularities. Assume further that the singularities are of corank at most 2. Then the conditions of Theorem \ref{claim} are equivalent to:
\begin{enumerate}
\setcounter{enumi}{2}
    \item $X$ does not have a very good line.
\end{enumerate}
\end{theorem}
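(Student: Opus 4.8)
The plan is to prove the equivalence of condition~(1) --- equivalently, by Theorem~\ref{claim}, of condition~(2) --- with condition~(3) by establishing the two implications separately. Throughout, for a line $\ell\subset X$ disjoint from $\Sing(X)$ let $q_\ell\colon \mathrm{Bl}_\ell X\to\bP^2$ be the conic bundle obtained by projecting $X$ away from $\ell$, with quintic discriminant $\Delta_\ell\subset\bP^2$ and associated double cover $\widetilde\Delta_\ell\to\Delta_\ell$; thus $\ell$ is \emph{very good} precisely when it is a good line in the sense of \cite{CML09}, the cover $\widetilde\Delta_\ell\to\Delta_\ell$ is \'etale, and both $\Delta_\ell$ and $\widetilde\Delta_\ell$ are irreducible.

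For the implication (1)$\Rightarrow\neg$(3), suppose $\sigma(X)>0$ and, for contradiction, that $\ell\subset X$ is a very good line. By \cite[Lemma~4.7]{Brosnan} we may write $X=V\cap H$ for a smooth cubic fourfold $V\subset\bP^5$, and Brosnan's reducibility criterion \cite{Brosnan} then applies: since $\sigma(X)>0$, the fibre of the compactified intermediate Jacobian fibration $\pi\colon\mathcal{J}_V\to(\bP^5)^*$ over $[H]$, namely the compactified intermediate Jacobian $\overline{J}(X)$, is reducible. On the other hand, because $\ell$ is very good and the singularities of $X$ have corank at most $2$, the constructions of \cite{CML09} and \cite{LSV} identify $\overline{J}(X)$ with the compactified Prym variety $\overline{\Prym}(\widetilde\Delta_\ell/\Delta_\ell)$ attached to an \'etale double cover of a reduced irreducible curve, which is irreducible (cf.\ \cite{MR0572974}, \cite{CML09}). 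This contradiction shows that a cubic with positive defect --- equivalently, by Theorem~\ref{claim}, one containing a plane or a rational normal cubic scroll --- has no very good line.

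For the converse $\neg$(1)$\Rightarrow$(3), assume $\sigma(X)=0$, so that by Theorem~\ref{claim} the cubic $X$ contains neither a plane nor a rational normal cubic scroll. It suffices to prove that the locus $B\subset F(X)$ of lines that are \emph{not} very good is a proper closed subset of the Fano scheme of lines, for then the general line is very good. That $B$ contains no component of $F(X)$ is checked condition by condition. Since $X$ is not a cone and $\Sing(X)$ is finite, the lines through a fixed singular point sweep out only the curve $C_q$, so $F(X)$ has a $2$-dimensional component whose general member is disjoint from $\Sing(X)$; and among lines disjoint from $\Sing(X)$, the hypothesis $\corank\le 2$ ensures --- this is where the corank bound enters --- that $q_\ell$ degenerates over each singular point in the expected fashion, so that the general such line is a good line with $\Delta_\ell$ carrying exactly the singularities of $X$, and moreover has no rank-one conic among its fibres, so that $\widetilde\Delta_\ell\to\Delta_\ell$ is \'etale; each of these is an open condition on $F(X)$.

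The main obstacle is the remaining global point: that $\Delta_\ell$ and $\widetilde\Delta_\ell$ are irreducible for the general line. I would argue by contrapositive. If $\Delta_\ell=\Delta_\ell'\cup\Delta_\ell''$ were reducible for every good line $\ell$, then pulling the components back through $q_\ell$ yields, for each $\ell$, a positive-dimensional family of lines on $X$ meeting $\ell$; letting $\ell$ vary over a $2$-dimensional component of $F(X)$ and applying the incidence geometry of lines on $X$ used in the proof of Theorem~\ref{claim}, these families assemble into a subvariety of $F(X)$ whose lines sweep out a surface in $X$ of degree at most three which is either a plane or a ruled surface --- forcing $X$ to contain a plane or a rational normal cubic scroll, contrary to hypothesis. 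The case in which $\Delta_\ell$ is irreducible but $\widetilde\Delta_\ell\to\Delta_\ell$ is the trivial cover (so $\widetilde\Delta_\ell$ is disconnected) for every good line is handled in the same way: the splitting organises the lines of $X$ meeting $\ell$ into two monodromy-invariant families which, as $\ell$ moves, again produce such a surface. Together with the first implication, this shows $B$ is a proper closed subset, so the general line is very good, completing the proof of the equivalence.
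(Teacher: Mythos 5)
Your proof has genuine gaps in both directions. For (1)$\Rightarrow\neg$(3), you argue via Brosnan's criterion that the fiber of a compactified intermediate Jacobian fibration over $[H]$ is reducible, while the very good line identifies that fiber with an irreducible compactified Prym. This requires the global existence of a compactification $\pi\colon\mathcal{J}_V\to(\bP^5)^*$ to which both statements apply for the \emph{same} fiber over the \emph{same} point; for an arbitrary smooth $V$ containing $X$ as a hyperplane section, such a compactification is not available (the other hyperplane sections of $V$ may be arbitrarily bad, and extending the LSV construction beyond Hodge-general $V$ is exactly the content of the work cited as [DMupcoming]). Moreover, the identification ``very good line $\Rightarrow$ irreducible fiber'' is essentially the corollary that the paper \emph{derives from} this theorem, so your route is close to circular. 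The paper instead proves this direction purely via the incidence geometry of lines: when $X$ contains a plane or scroll, $F(X)$ is reducible, and Proposition~\ref{prop: plane implies no v good line} shows directly that $\widetilde{D}_l$ must then pick up components from the other components of $F(X)$.

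For $\neg$(1)$\Rightarrow$(3), the openness statements are fine (they match Lemmas~\ref{lem: F(X) irreducible} and~\ref{good lines}), but the crucial irreducibility of $D_l$ and $\widetilde{D}_l$ is only sketched, and the sketch's geometric claims do not hold as stated. If $D_l$ has a line component $L_l$, its preimage in $X$ is an irreducible \emph{cubic surface} $S_l$ which need not be a plane or a scroll; the paper must show $S_l$ is singular along a line and then invoke the finiteness statement of Lemma~\ref{lem: only finite bad surfaces} to get a contradiction. If $D_l$ is a conic plus a cubic, no swept surface of low degree appears at all: the paper's argument is a case analysis of the singularities of $D_l$ (which match those of $X$) against the classification of \cite{viktorova2023classification}, concluding $\sigma(X)>0$ from Theorem~\ref{defect}. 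Finally, if $D_l$ is irreducible but $\widetilde{D}_l$ splits, the two ``monodromy-invariant families'' you describe assemble into the two components $\calC_1,\calC_2$ of $(P\times_XP)\setminus\Delta_P$, each of which \emph{dominates} $X$ and produces no surface in $X$; the contradiction in the paper (Theorem~\ref{claim:quintic irreducible}) comes instead from showing $(P\times_XP)\setminus\Delta_P$ is irreducible via the LSV degree argument when $F(X)$ is irreducible. These three case analyses are the mathematical core of the theorem and are missing from your proposal.
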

This sharpens the results of \cite{CML09} and \cite{LSV}: there are examples of cubic threefolds with worse than allowable singularities (i.e. cubics that are not GIT semi-stable) that none the less admit a very good line. The original compactified intermediate Jacobian construction of \cite{LSV} only applied to Hodge general cubic fourfolds, i.e.. away from a countable union of divisor in the moduli of cubic fourfolds. The above result allows one to extend this compactification \cite{LSV} to a wider range of cubic fourfolds, as discussed in the recent work of \cite{DMupcoming}. 
In particular, combining \cite[Theorem 1]{DMupcoming} with Theorem \ref{claim} and \ref{no v good line} one obtains the following corollary:
\begin{corollary}
    Let $V\subset \bP^5$ be a smooth cubic fourfold, and assume there are no hyperplane sections with a corank 3 singularity. Then there exists a hyperk\"ahler compactification of the intermediate Jacobian fibration $\pi:\calJ_V\rightarrow \bP^5$ \textbf{with irreducible fibers} if and only if $X$ does not contain a plane or a rational normal cubic scroll.
\end{corollary}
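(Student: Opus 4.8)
The plan is to prove the two implications of the biconditional separately, after recording three preliminary observations. First, a plane lies on $V$ if and only if it lies on some hyperplane section of $V$: a plane spans a $\bP^2\subset\bP^5$, which is contained in (many) hyperplanes, while conversely any plane on a hyperplane section of $V$ lies on $V$; the same holds for a rational normal cubic scroll, which spans a $\bP^4\subset\bP^5$ — itself a hyperplane. Second, a cone over a cubic surface always contains a plane, namely the cone over a line of that surface, so the hypothesis that $V$ contains no plane will in particular prevent any hyperplane section of $V$ from being a cone over a cubic surface. Third, by \cite[Lemma 4.7]{Brosnan} every hyperplane section $X = V\cap H$ of the smooth cubic fourfold $V$ is a cubic threefold with isolated singularities.

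For the implication ``a hyperk\"ahler compactification with irreducible fibers exists $\implies$ $V$ contains no plane and no rational normal cubic scroll'' I would argue by contraposition. If $V$ contains a plane or a cubic scroll, the first observation produces a hyperplane section $X = V\cap H$ containing it. As $X$ has isolated singularities, the easy direction of Theorem \ref{claim} — the observation of \cite{MR4395101} and \cite{sac2021birational} — gives $\sigma(X)>0$. By Brosnan's criterion \cite{Brosnan} the fiber of $\pi$ over $H$ is then reducible in the original construction, and — this being part of the input of \cite[Theorem 1]{DMupcoming} — this obstruction cannot be removed by a hyperk\"ahler birational modification, so no hyperk\"ahler compactification of $\pi$ has all fibers irreducible.

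For the converse, assume $V$ contains no plane and no rational normal cubic scroll and, by the standing hypothesis, no hyperplane section of $V$ carries a corank-$3$ singularity. By the first observation, no hyperplane section of $V$ contains a plane or a cubic scroll, and by the second, no hyperplane section of $V$ is a cone over a cubic surface. Combined with the corank-$3$ exclusion and the classification of isolated singularities of cubic threefolds, every $X = V\cap H$ then has only isolated singularities of corank at most $2$. Hence Theorems \ref{claim} and \ref{thm: no very good lines} apply to each such $X$, and from $\sigma(X)=0$ (Theorem \ref{claim}) we conclude that $X$ admits a very good line. Since every hyperplane section of $V$ admits a very good line, \cite[Theorem 1]{DMupcoming} produces a hyperk\"ahler compactification of $\pi:\calJ_V\to\bP^5$ with irreducible fibers.

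The step I expect to demand the most care is the interface with \cite[Theorem 1]{DMupcoming} and Brosnan's theorem: one must know, on one hand, that a single positive-defect hyperplane section obstructs \emph{every} hyperk\"ahler compactification of $\pi$ with irreducible fibers (not merely the original models of \cite{LSV} and \cite{sac2021birational}), and, on the other hand, that the very-good-line condition on \emph{all} hyperplane sections of $V$ is exactly what is needed to run the compactification of \cite[Theorem 1]{DMupcoming}. The remaining verifications — in particular that ``no plane on $V$'' together with ``no corank-$3$ hyperplane section'' really forces every member of the fibration into the corank-$\le 2$, non-cone regime where Theorems \ref{claim} and \ref{thm: no very good lines} are available — are routine bookkeeping given the classification of singularities of cubic threefolds.
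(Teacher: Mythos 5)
Your proposal is correct and follows exactly the route the paper intends: the paper offers no argument beyond the one-line attribution ``combining \cite[Theorem 1]{DMupcoming} with Theorem \ref{claim} and Theorem \ref{thm: no very good lines}'', and your write-up is a faithful fleshing-out of that combination, with the routine reductions (planes/scrolls on $V$ versus on its hyperplane sections, and the cone and corank bookkeeping needed to place every section in the regime of Theorems \ref{claim} and \ref{thm: no very good lines}) handled correctly. You also rightly flag that the only genuinely delicate points --- that one positive-defect section obstructs \emph{every} hyperk\"ahler compactification with irreducible fibers, and that very good lines on all sections is precisely the hypothesis of \cite[Theorem 1]{DMupcoming} --- are exactly what is being outsourced to \cite{Brosnan} and \cite{DMupcoming}, just as in the paper.
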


\subsection*{Outline}
We recall relevant notation and definitions in \S\ref{sec: prelims}. In \S\ref{subsec: defect} we prove our formula in Theorem \ref{thm:main} for the defect in terms of components of the associated curve $C_q$ (Theorem \ref{defect}), and in \S\ref{subsec: Q fact} we provide some applications to $\bQ$-factoriality of cubic threefolds. In \S\ref{MHS} we compute the mixed Hodge structure for cubic threefolds, proving Theorem \ref{thm: cohomology} (Theorem \ref{thm: coho precise}). We prove Theorem \ref{claim} in \S\ref{defect=plane/scroll} (Theorem \ref{defect equiv to plane or scroll}), and finally discuss the existence of very good lines in \S\ref{lack of v good line}, proving Theorem \ref{thm: no very good lines} (Theorem \ref{thm: v good iff defect=0}).

\subsection*{Funding}
The second author is supported by Methusalem grant METH/21/03 -- long term structural funding of the Flemish Government. 

\subsection*{Acknowledgements}
The authors would like to thank Radu Laza for valuable advice and illuminating conversations. They would like to thank Evgeny Shinder for useful discussions about $\bQ$-factoriality and to thank him, Ivan Cheltsov, and Yagna Dutta for helpful comments on a preliminary version. They are grateful to Andreas H\"{o}ring for suggesting Corollary \ref{defect bound}. Finally, they would like to greatly thank the anonymous referees for their helpful suggestions to improve the readability of this manuscript. In particular, we thank the referee for the alternative proof in Remark \ref{ref remark}.

\section{Preliminaries}\label{sec: prelims}
Throughout, $X\subset \bP^4$ is a cubic threefold with isolated singularities that is not a cone over a cubic surface. We recall the projection method in \S \ref{subsec:project}, which we will use to compute the cohomology of $X$. In \S \ref{subsec: Vik23} we review the relevant results on singularities that appear on cubic threefolds, following \cite{viktorova2023classification}. In \S \ref{subsec: local invariants} we define the local invariants attached to the singularities of $X$.

\subsection{The projection method}\label{subsec:project} Let $q\in X$ be an isolated singularity, and choose coordinates so that $q=[0:0:0:0:1]$. The equation of $X$ is thus
$$
	f(x_0,\dots, x_4)=x_4g_2(x_0,\dots, x_3)+g_3(x_0,\dots, x_3),
$$ where $g_2, g_3$ are homogeneous polynomials of degree 2 and 3 respectively. 
Projection from the point $q$ gives a birational map $X\dashrightarrow \bP^3$; we have the diagram:
\[\begin{tikzcd}
	& Bl_qX \arrow{dl}\arrow{dr} &\\
	X\arrow[dashed]{rr} & & \bP^3
\end{tikzcd}
\]
Define $Q_q:=V(g_2(x_0,\dots, x_3))\subset \bP^3$, and $S_q:=V(g_3(x_0,\dots, x_3))$. 
We let $C_q:=Q_q\cap S_q$ be the $(2,3)$ complete intersection curve, that parametrises lines in $X$ passing through $q$. Note that if $Q_q$ is a double plane, then the curve $C_q$ is non-reduced.

\begin{lemma}\label{blow up iso}
    Let $X\subset \bP^4$ be a cubic threefold with only isolated singularities that is not a cone over a cubic surface. Assume $q\in X$ is an isolated singularity. 
    Then $$Bl_qX\cong Bl_{C_q}\bP^3.$$
\end{lemma}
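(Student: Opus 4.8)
The plan is to realise both $Bl_q X$ and $Bl_{C_q}\bP^3$ as the closure of the graph of the projection from $q$, using the standard fact that the closure of the graph of a rational map to projective space is the blow-up of its base ideal sheaf. With coordinates as above, write $f = x_4 g_2 + g_3$ and let $\pi_q\colon X\dashrightarrow \bP^3$, $[x_0:\cdots:x_4]\mapsto[x_0:\cdots:x_3]$, be the projection. The first step is to record the inverse explicitly: the map $\psi\colon \bP^3\dashrightarrow\bP^4$ given by $[a_0:\cdots:a_3]\mapsto[a_0 g_2(a):\cdots:a_3 g_2(a):-g_3(a)]$ has image in $X$, since $f(\psi(a)) = -g_3(a)\,g_2(a)^3 + g_2(a)^3\,g_3(a) = 0$, and a direct check gives $\pi_q\circ\psi = \mathrm{id}$ and $\psi\circ\pi_q = \mathrm{id}$ on dense open sets, the latter because $x_4\, g_2(x_0,\dots,x_3) = -g_3(x_0,\dots,x_3)$ on $X$. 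Hence $\pi_q$ is birational with inverse $\psi$. This is precisely where the hypothesis on $X$ enters: $X$ is a cone over a cubic surface with vertex $q$ exactly when $g_2\equiv 0$, and under our standing assumptions $g_2\not\equiv 0$ for every singular point $q$ (otherwise $f = g_3(x_0,\dots,x_3)$ exhibits $X$ as such a cone).

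Next I would compute the two base ideal sheaves. For $\pi_q$, the relevant linear system is $\langle x_0,\dots,x_3\rangle$ on $X$; on the affine chart $\{x_4\neq0\}$ the associated ideal sheaf is generated by the affine coordinates $x_0/x_4,\dots,x_3/x_4$, so it equals the ideal sheaf of the reduced point $q$ (scheme-theoretically $V(x_0,\dots,x_3)=\{q\}$, and there are no base points off this chart). Thus the graph closure $\Gamma\subset X\times\bP^3$ is $Bl_q X$, with the two projections being the blow-down $Bl_q X\to X$ and the resolved $\pi_q$. For $\psi$, the relevant system is $\langle x_0 g_2,\dots,x_3 g_2,\, g_3\rangle\subset\sH^0(\calO_{\bP^3}(3))$; on $D_+(x_j)$ the associated ideal sheaf is generated by $\{x_i g_2/x_j^3\}_{i=0}^3$ together with $g_3/x_j^3$, and since $x_j g_2/x_j^3 = g_2/x_j^2$ occurs among these generators, this ideal coincides with $(g_2/x_j^2,\, g_3/x_j^3)$, i.e.\ with $\calI_{C_q}$ for $C_q = Q_q\cap S_q$. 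Therefore the graph closure of $\psi$ (inside $\bP^3\times\bP^4$, equivalently inside $\bP^3\times X$ since $\psi$ maps into $X$) is $Bl_{C_q}\bP^3$. Because $\psi = \pi_q^{-1}$, this graph closure is canonically identified with $\Gamma$ under the swap $X\times\bP^3\cong\bP^3\times X$, and we conclude $Bl_q X\cong Bl_{C_q}\bP^3$.

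The main point that genuinely requires care is the base-ideal computation for $\psi$: the graded ideal $(x_0 g_2,\dots,x_3 g_2, g_3)$ is not saturated and is \emph{not} equal to $(g_2,g_3)$ --- it is $\mathfrak m\cdot(g_2)+(g_3)$ with $\mathfrak m$ the irrelevant ideal --- so the identification with $\calI_{C_q}$ must be carried out at the level of ideal \emph{sheaves}, where $\mathfrak m$ sheafifies to $\calO_{\bP^3}$ and the chartwise computation above applies. The remaining inputs are routine: $X$ is integral, hence so is $Bl_q X$; and $g_2$ and $g_3$ share no common factor (otherwise $f$ would be reducible), so $C_q$ is genuinely the $(2,3)$ complete intersection curve of the statement. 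I would simply cite the fact that a graph closure equals the blow-up of the base ideal rather than reprove it. One could equivalently argue via the universal property of blowing up: the resolved projection $Bl_q X\to\bP^3$ pulls $\calI_{C_q}$ back to an invertible sheaf (a short chart computation), producing a morphism $Bl_q X\to Bl_{C_q}\bP^3$, and symmetrically in the other direction; but the graph formulation packages both directions, and their mutual inverseness, at once.
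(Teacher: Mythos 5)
Your argument is correct. For this lemma the paper does not give a proof at all: it defers entirely to \cite[Proposition 3.1]{CML09} (with \cite[Theorem 2.1.1]{Havasi} for details), so your write-up supplies a self-contained argument where the paper cites. The route you take --- realising both blow-ups as the closure of the graph of the birational projection and computing the two base ideal sheaves in charts --- is the standard one underlying the cited results, and all the delicate points are handled correctly: the explicit inverse $\psi=[a_0g_2:\cdots:a_3g_2:-g_3]$ and the verification $f\circ\psi=0$; the observation that $g_2\not\equiv 0$ is exactly the non-cone hypothesis (which is where that hypothesis is used); the fact that $g_2,g_3$ have no common factor since $X$ is irreducible; and, most importantly, the chartwise identification of the base ideal sheaf of $\psi$ with $(g_2/x_j^2,\,g_3/x_j^3)=\calI_{C_q}$ on $D_+(x_j)$, which you rightly flag as the step needing care since the graded ideal $(x_0g_2,\dots,x_3g_2,g_3)$ is not $(g_2,g_3)$ before sheafifying. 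Note that this sheaf-level computation also transparently covers the corank-$3$ case, where $C_q$ is the non-reduced complete intersection scheme, consistent with the paper's convention stated just before the lemma. The identification of the two graph closures under the swap is justified because both are closures of the graph of the mutually inverse isomorphism between dense opens of the irreducible varieties $X$ and $\bP^3$. Citing the standard fact that the graph closure of a map given by a linear system is the blow-up of its base ideal is entirely appropriate here.
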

\begin{proof}
	This is Proposition 3.1 of \cite{CML09}. For a more detailed proof, see \cite[Theorem 2.1.1]{Havasi}.
\end{proof}

\subsection{Singularities of cubic threefolds}\label{subsec: Vik23}

In \cite{viktorova2023classification} isolated singularities of cubic threefolds are studied using the projection method explained above. The author classified possible combinations of singularities by studying singularities of the curve $C_q$. The geometry of $C_q$ is particularly relevant to computing the cohomology of $X$.

First, we recall the necessary notation and definitions. In what follows, we are concerned with the following \textbf{analytic types} of isolated hypersurface singularities:
\begin{itemize}
    \item $A_n$, $D_n$ $(n\geq 4)$, $E_6$, $E_7$, $E_8$ ($ADE$ singularities),
    \item $U_{12}$, $S_{11}$, $Q_{10}$, $T_{pqr}$ with $\frac{1}{p}+\frac{1}{q}+\frac{1}{r}\leq1$.
\end{itemize}
For the standard local equations of these singularities we refer the reader to Chapter 15.1 of \cite{Arnold2012}.

\begin{definition}
    A \textbf{combination of singularities} on $X$ is the unordered set of all singularities on $X$. If the combination consists of singularities of types $K_1,\ldots,K_m$, we denote it as $K_1+\ldots+K_m$. We call $K_1+\ldots+K_m$ the \textbf{singularity type of} $X$.
\end{definition}

\begin{definition}
    Let $q\in X$ be a singular point. We define the \textbf{corank} of $q$ to be equal to the corank of the quadric $Q_q.$ Recall that the corank of the quadric $Q_q\subset \bP^3$ is the corank of the corresponding quadratic form $g_2$, that is, $4-\rank (g_2)$.
\end{definition}
This definition agrees with the usual definition of corank of a critical point of a function in this case. 
We will use the following well-known lemma (see e.g. \cite[Theorem 16.2.2]{Arnold2012}):

\begin{lemma} \label{An sing}
    The corank of $q$ is equal to one if and only if $q$ is of $A_r$ type with $r>1.$ The corank of $q$ is equal to zero if and only if $q$ is an $A_1$ singularity.
\end{lemma}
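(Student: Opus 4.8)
\emph{Proof proposal.} The plan is to reduce the statement to the splitting lemma (generalized Morse lemma) together with the classification of one-variable function germs over $\bC$. First I would reconcile the two notions of corank in play. In the affine chart $x_4=1$ near $q$, the local equation of $X$ is $g_2(x_0,\dots,x_3)+g_3(x_0,\dots,x_3)$, whose quadratic part is exactly $g_2$; hence the Hessian of this germ at $q$ has rank $\rank(g_2)$, so the corank of $q$ as a critical point of a function equals $4-\rank(g_2)=\corank(Q_q)$. It therefore suffices to argue with the corank of the critical point.

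Next I would invoke the splitting lemma (see \cite{Arnold2012}): after an analytic change of coordinates, the germ of $f$ at $q$ takes the form $Q(y_1,\dots,y_{4-c})+h(y_{4-c+1},\dots,y_4)$, where $c=\corank(q)$, $Q$ is a nondegenerate quadratic form, and $h\in\mathfrak{m}^3$. If $c=0$, then $f$ is analytically equivalent over $\bC$ to $y_1^2+y_2^2+y_3^2+y_4^2$, which is by definition an $A_1$ point; conversely the normal form of $A_1$ has nondegenerate Hessian, so $\corank=0$. If $c=1$, then $f\sim Q(y_1,y_2,y_3)+h(y_4)$ with $Q$ of rank $3$ and $h\in\mathfrak{m}^3$ a germ of a single variable. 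Here the isolated-singularity hypothesis forces $h\not\equiv 0$: otherwise $\Sing(X)$ would contain the $y_4$-axis. Being a nonzero one-variable germ, $h(y_4)=u(y_4)\,y_4^{r+1}$ for a unit $u$ and some $r+1\geq 3$; absorbing $u$ into the coordinate $y_4$ gives $f\sim y_1^2+y_2^2+y_3^2+y_4^{r+1}$ with $r\geq 2$, i.e.\ an $A_r$ singularity. Conversely, the $2$-jet of the normal form of $A_r$ with $r\geq 2$ is $y_1^2+y_2^2+y_3^2$, of rank $3$, so $\corank=1$. Since corank $0$ and corank $1$ are thus each characterized, the lemma follows.

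The only real (and minor) obstacle is the corank-$1$ case: one must check that the residual summand genuinely depends on a single variable and that isolatedness rules out its identical vanishing, forcing the $A_r$ form; everything else is bookkeeping with the splitting lemma and the $\bC$-normalization of nondegenerate quadratic forms.
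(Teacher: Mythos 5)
Your proof is correct and is essentially the standard argument behind the reference the paper cites: the paper gives no proof of this lemma, simply pointing to \cite[Theorem 16.2.2]{Arnold2012}, and your write-up (reconciling the two coranks in the chart $x_4=1$, then applying the splitting lemma and classifying the one-variable residual germ, with isolatedness ruling out $h\equiv 0$) is precisely the content of that citation. No gaps.
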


\begin{proposition} [{\cite{viktorova2023classification}, Proposition 2.2, Table 2}]
    Suppose a singular point $q\in X$ is of corank $3$. Then $q$ 
     is the only singularity of $X$ and its type is $U_{12}$, $S_{11}$, $Q_{10}$, $T_{444}$, $T_{344}$, $T_{334}$ or $T_{333}$. Additionally, the curve $C_q$ is irreducible if and only $q$ is a $T_{333}$, $T_{334}$ or $Q_{10}$ singularity.
\end{proposition}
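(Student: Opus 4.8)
The plan is to combine the normal form forced by the corank-$3$ hypothesis with a direct computation of $\Sing(X)$, and then to classify according to the plane cubic supporting $C_q$. First, normalise the local equation at $q$: since $X$ is not a cone, $\mathrm{mult}_q X = 2$ (a cubic threefold with a triple point is a cone over a cubic surface), so $f = x_4 g_2 + g_3$ as in \S\ref{subsec:project}, and $\corank(q)=3$ means $g_2$ has rank $1$, so after a linear change of coordinates $g_2 = x_0^2$. Absorbing the part of $g_3$ divisible by $x_0^2$ into the variable $x_4$, we may take
$$f = x_0^2 x_4 + x_0\, b_2(x_1,x_2,x_3) + c_3(x_1,x_2,x_3),$$
with $b_2$ a quadratic form and $c_3$ a cubic form. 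Then $Q_q = V(x_0^2)$ is a double plane, $C_q = V(x_0^2,\, x_0 b_2 + c_3)$ is non-reduced, and its reduction is the plane cubic $\Gamma := V(x_0,c_3)$ in $V(x_0)\cong\bP^2$; in particular the number $k$ of components of $C_q$ equals the number of irreducible factors of $c_3$.

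The second step computes $\Sing(X)$ from this equation. From $\partial f/\partial x_4 = x_0^2$ one gets $x_0 = 0$ on $\Sing(X)$, and the remaining equations reduce to $b_2(x_1,x_2,x_3) = 0$ together with $\nabla c_3(x_1,x_2,x_3) = 0$, with $x_4$ unconstrained. Hence a singular point other than $q$ would sweep out a whole line of singular points through $q$, contradicting isolatedness; so $\Sing(X) = \{q\}$ — the first assertion — and isolatedness of $q$ is equivalent to $V(b_2)\cap\{\nabla c_3 = 0\} = \varnothing$ in $\bP^2$. Since a line contained in $\{\nabla c_3 = 0\}$ necessarily meets the conic $V(b_2)$, the form $c_3$ has no repeated linear factor, so it is reduced, $V(b_2)$ misses the finitely many singular points of $\Gamma$, and $\Gamma$ is one of the seven reduced plane cubics: smooth; irreducible nodal; irreducible cuspidal; a smooth conic plus a transverse line; a smooth conic plus a tangent line; a triangle of three lines; or three concurrent lines.

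The third step reads off the analytic type of $q$. Completing the square in $x_0$ analytically gives $f\sim x_0^2 + g$ with $g := c_3 - \tfrac14 b_2^2\in\bC\{x_1,x_2,x_3\}$, a corank-$3$ germ whose $3$-jet is $c_3$, so by the splitting lemma the type of $q$ is the type of $g$. For each of the seven configurations one normalises $c_3$ by $\PGL_3$ (each is a single orbit, except that smooth cubics vary in a one-parameter family), notes that $b_2\neq 0$ at each singular point of $\Gamma$ forces the relevant degree-$4$ coefficient of $g$ to be nonzero, and applies finite determinacy to identify $g$: smooth cubic $\leadsto T_{333}$; nodal cubic $\leadsto T_{334}$; cuspidal cubic $\leadsto Q_{10}$; conic plus transverse line $\leadsto T_{344}$; conic plus tangent line $\leadsto S_{11}$; triangle $\leadsto T_{444}$; three concurrent lines $\leadsto U_{12}$. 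This yields exactly the seven listed types, and since $\Gamma = C_q^{\red}$ is irreducible precisely in the first three cases, $C_q$ is irreducible if and only if $q$ is of type $T_{333}$, $T_{334}$, or $Q_{10}$.

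The main obstacle is the last identification. The quartic correction $-\tfrac14 b_2^2$ is a perfect square of a quadratic form rather than an arbitrary quartic, so one must check that it never degenerates $g$ past the stated singularity — that the pinned modulus of each $T_{pqr}$ is a good value and that the surplus degree-$4$ monomials are absorbable. This is exactly where the constraint $b_2\neq 0$ on $\Sing(\Gamma)$ is used: it forbids all of $T_{33r}$, $T_{34r}$, $T_{44r}$ with $r\geq 5$, which is what keeps the list finite. I would complement the argument by exhibiting one explicit pair $(b_2,c_3)$ realising each of the seven types.
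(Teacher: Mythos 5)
Your argument is correct, but note that the paper does not actually prove this statement: it is imported wholesale from \cite{viktorova2023classification} (Proposition 2.2 and Table 2), so there is no internal proof to compare against, and what you have produced is a self-contained derivation in the same spirit as that reference (projection from $q$ and analysis of the plane cubic). The skeleton is sound. The normal form $f=x_0^2x_4+x_0b_2(x_1,x_2,x_3)+c_3(x_1,x_2,x_3)$ is forced by $\corank(q)=3$ together with the non-cone hypothesis; the computation $\Sing(X)=\{x_0=b_2=0,\ \nabla c_3=0\}$ with $x_4$ unconstrained correctly shows that a second singular point would sweep out a line of singular points through $q$, which simultaneously gives $\Sing(X)=\{q\}$ and the key constraint $V(b_2)\cap V(\nabla c_3)=\varnothing$ in $\bP^2$; the deduction that $c_3$ is reduced is right (if $\ell^2\mid c_3$ then the whole line $V(\ell)$ lies in $V(\nabla c_3)$ and must meet the conic $V(b_2)$); and completing the square reduces everything to the germ $g=c_3-\tfrac14b_2^2$, whose $3$-jet is the cone over one of the seven reduced plane cubics. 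Your dictionary (smooth $\to T_{333}$, nodal $\to T_{334}$, cuspidal $\to Q_{10}$, conic plus secant $\to T_{344}$, conic plus tangent $\to S_{11}$, triangle $\to T_{444}$, concurrent lines $\to U_{12}$) is the correct one, and since $(C_q)_{\red}=V(x_0,c_3)$, the irreducibility statement for $C_q$ falls out of the first three cases.

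The step you flag as the main obstacle is indeed where the work lies, and it does have to be done case by case, but it goes through. For the hyperbolic types the clean argument that the third exponent is $4$ rather than $\geq 5$ is that the restriction of $g$ to the line over a node of $\Gamma$ equals $-\tfrac14 b_2^2$, which has order exactly $4$ there precisely because $b_2$ does not vanish at that node; for $Q_{10}$, $S_{11}$, $U_{12}$ one checks that $g$ is semi-quasihomogeneous with nondegenerate principal part equal to Arnold's normal form (for instance weights $(\tfrac13,\tfrac38,\tfrac14)$ in the cuspidal case), the nonvanishing of $b_2$ at the cusp, tacnode or triple point supplying the required degree-$4$ monomial while every other term of $b_2^2$ has weight strictly greater than one. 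Two small caveats: you should say explicitly that $c_3\neq 0$ (otherwise $f=x_0(x_0x_4+b_2)$ is reducible, contradicting isolated singularities), and your closing plan to exhibit one pair $(b_2,c_3)$ per type only establishes realisability, not exhaustiveness, so it cannot replace the determinacy checks. With those checks included, your proof is complete and agrees with the cited classification.
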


\begin{proposition} [{\cite{viktorova2023classification}, Proposition 3.5, Figure 4, Table 3}] \label{corank 2}
    Let $q\in X$ be a singular point of corank~$2$. Then $Q_q$ is a union of two distinct planes and $C_q$ is a union of two plane curves. Thus $C_q$ has at least two irreducible components. Moreover, the following statements hold:
    \begin{enumerate}
        \item If $C_q$ has exactly two irreducible components, then $q$ is of type $D_n$, $4\leq n\leq 7$ or $E_n$, $n=6, 7, 8$.
        \item If $q$ is a $D_7$, $D_8$, $E_8$ or a non-$ADE$ singularity, then the number of components of $C_q$ is determined by the singularity type of $X$.
    \end{enumerate}
\end{proposition}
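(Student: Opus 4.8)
The plan is to read off the geometry of $Q_q$ and $C_q$ by linear algebra, and then to match the local classification of corank-$2$ hypersurface singularities against the number of irreducible components of $C_q$.

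\emph{The structural statements.} Since $\corank(q)=4-\rank(g_2)=2$, the quadratic form $g_2$ has rank $2$, hence over $\bC$ is equivalent to $x_0x_1$; therefore $Q_q=V(x_0x_1)=H_0\cup H_1$ with $H_i=V(x_i)\subset\bP^3$ two distinct planes, and $C_q=Q_q\cap S_q=D_0\cup D_1$ where $D_i:=H_i\cap S_q\subset H_i\cong\bP^2$ is a plane cubic curve. A common irreducible component of $D_0$ and $D_1$ would lie in $H_0\cap H_1=\ell:=V(x_0,x_1)$ and hence equal $\ell$; since $\deg D_i=3>1$ no $D_i$ contains $\ell$, so $D_0\neq D_1$ and $C_q$ has at least two irreducible components. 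I would also record that $D_i\cap\ell$ equals the length-$3$ subscheme $\Sigma:=S_q\cap\ell$ for $i=0,1$, so that $D_0$ and $D_1$ meet exactly along $\Sigma$, with $(D_0\cdot D_1)_P=\mathrm{mult}_P\Sigma$ for each $P\in\Sigma$.

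\emph{Part (1).} In the affine chart $x_4=1$, $X$ is locally $\{h=0\}$ with $h=g_2(y)+g_3(y)$; applying the splitting lemma to the rank-$2$ quadratic part makes $h$ analytically equivalent to $u^2+v^2+\phi(s,t)$ with $\mathrm{ord}_0\phi\geq3$, the analytic type of $q$ being that of the plane curve germ $\{\phi=0\}$, and a direct computation identifies its leading cubic $\phi_3$ with the equation cutting out $\Sigma$ on $\ell$. The technical core is then to pin down how the analytic type of $q$ (equivalently, via Lemma~\ref{blow up iso}, the singularities of $Bl_{C_q}\bP^3$ over $\Sing(C_q)$) constrains the local geometry of $D_0\cup D_1$ along $\ell$. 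Granting this dictionary, one observes: if $\ell\subset S_q$ then $\ell$ is a component of both $D_i$ and their residual conics give at least two further components, so $C_q$ has at least three; otherwise $D_0,D_1$ share no component and $C_q$ has exactly two components precisely when both $D_i$ are irreducible. Since an irreducible plane cubic has total $\delta$-invariant at most $1$, and since $\delta(C_q)_P=\delta(D_0)_P+\delta(D_1)_P+\mathrm{mult}_P\Sigma$ at each $P\in\Sigma$, the contribution $D_0\cup D_1$ can make to the singularity of $q$ while keeping both $D_i$ irreducible is bounded; running through the corank-$2$ normal forms one checks this is compatible with $D_0,D_1$ both irreducible exactly for $D_4,\dots,D_7$ and $E_6,E_7,E_8$, whereas for $D_8$ (and deeper $D_n$) and for the corank-$2$ non-$ADE$ types the required local configuration forces $\delta(D_0)_P$ or $\delta(D_1)_P$ to exceed $1$ — i.e. $D_0$ or $D_1$ to be reducible — or forces $\ell\subset S_q$; either way $C_q$ gains a third component. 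The contrapositive is precisely (1).

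\emph{Part (2).} For $q$ of type $D_7$, $D_8$, $E_8$ or a corank-$2$ non-$ADE$ singularity the inequalities above are tight, so whether $D_0$ and $D_1$ stay irreducible is not forced by the jets of $g_3$ at $q$ alone; it is pinned down by the remaining singular points of $X$, which constrain the moduli of $S_q$ and of $g_2$ (for instance forcing a node of some $D_i$ away from $\ell$ and so using up its $\delta$-budget). I would then run through the finitely many global singularity types of $X$ that contain such a point $q$ — exactly the content of Figure~4 and Table~3 of \cite{viktorova2023classification} — and read off that in each the number of components of $C_q$ is determined. I expect the main obstacle to be the local dictionary invoked in Part~(1): the precise translation of each corank-$2$ normal form $\phi$ into the degeneration pattern of $D_0\cup D_1$ along $\ell$. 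Once that is established, Parts~(1) and~(2) reduce to the $\delta$-invariant bound and a finite check against the classification.
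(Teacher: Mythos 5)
The paper does not actually prove this statement: it is imported verbatim from \cite{viktorova2023classification} (Proposition 3.5, Figure 4 and Table 3 there), so the only ``proof'' in the paper is the citation, and your proposal is in effect an attempt to reconstruct that reference's argument. Your structural paragraph is essentially on the right track but contains one false step: from $\deg D_i=3>1$ you conclude that no $D_i$ contains $\ell$, which is wrong --- a plane cubic can perfectly well have a line as a component, and you yourself treat the case $\ell\subset S_q$ (equivalently $\ell\subset D_0\cap D_1$) two paragraphs later. What is true is that $D_i\neq\ell$ as a curve; the only way $C_q$ could fail to have two components is the degenerate situation where both $D_i$ are supported on $\ell$, and that forces $X$ to be singular along the whole plane $\langle q,\ell\rangle$, contradicting the standing hypothesis of isolated singularities. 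This case should be excluded explicitly rather than by the degree count.

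More seriously, parts (1) and (2) are not proved. The entire content of the cited proposition is the dictionary between the corank-$2$ normal form of $\phi=c-ab+\cdots$ (with $c=g_3|_\ell$ the binary cubic cutting out $\Sigma$ and $a,b$ the quadric coefficients of $x_0,x_1$ in $g_3$) and the local configuration of $(D_0,D_1)$ along $\ell$, followed by the finite case check recorded in Figure 4 and Table 3 of \cite{viktorova2023classification}. You explicitly defer both steps (``granting this dictionary'', ``I would then run through\dots''), so nothing beyond the structural statement is actually established. Moreover, the shortcut you propose --- bounding $\delta(D_i)_P\leq 1$ for irreducible cubics against $\delta(C_q)_P=\delta(D_0)_P+\delta(D_1)_P+\mathrm{mult}_P\Sigma$ --- is not the right discriminating invariant: for the whole $E$-series both cubics can be smooth along $\ell$ (e.g.\ $E_8$ occurs with $D_0,D_1$ smooth and inflectionally tangent to $\ell$ at the triple point of $\Sigma$), so the $\delta$-budget is slack exactly where you need it to bite, and what separates $D_7$ from $D_8$, or $E_8$ from the deeper non-ADE types, is higher-order jet data of $\phi$ involving the quadrics $a$ and $b$ --- data about $S_q$ transverse to the two planes that is not captured by the $\delta$-invariants of $C_q$ along $\ell$. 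In short, the proposal identifies the correct framework (the same projection and splitting-lemma setup as the reference) but leaves its decisive step as an acknowledged gap, and the proposed replacement for that step would not suffice.
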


For the rest of this subsection, we assume that $X$ only contains singularities of corank $1$ and $0$ which are $A_n$ singularities by Lemma \ref{An sing}. Let $q\in X$ be an $A_n$ singularity with the largest $n$. If $n>1$, then the blow up $\widetilde{Q}_q$ of the cone $Q_q$ at the cone point is isomorphic to the Hirzebruch surface $\bF_2$. We denote by $\widetilde{C}$ the strict transform of $C_q$.

\begin{lemma} [{\cite{viktorova2023classification}, Lemma 4.4, Propositions 4.6-4.8}] \label{classofc}
	Assume that $q$ is of $A_n$ type with $n>2$. Then the class $[\widetilde{C}]\in\Pic(\bF_2)$ equals $2\sigma+6f$, where $\sigma$ is the class of the unique irreducible curve with negative self-intersection and $f$ is the class of a fiber. Then the only possible classes of irreducible components of $\widetilde{C}$ are $2\sigma+6f$, $2\sigma+5f$, $2\sigma+4f$, $\sigma+4f$, $\sigma+3f$, $\sigma+2f$, or $f$. In particular, the class $\sigma+3f$ corresponds to a twisted cubic on $C_q$, $\sigma+2f$ corresponds to a conic and $f$ to a line.
\end{lemma}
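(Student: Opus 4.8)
The plan is to compute the class $[\widetilde{C}]$ on $\bF_2$ directly, and then read off the possible classes of its irreducible components from the effective cone of $\bF_2$.

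Recall that $\Pic(\bF_2)=\bZ\sigma\oplus\bZ f$ with $\sigma^2=-2$, $\sigma\cdot f=1$, $f^2=0$, that $\pi\colon \bF_2=\widetilde{Q}_q\to Q_q\subset\bP^3$ contracts $\sigma$ to the cone point $p_0$, and that $\pi^*\calO_{Q_q}(1)=\calO_{\bF_2}(\sigma+2f)$. Choose coordinates so that $g_2=x_0^2+x_1^2+x_2^2$; then $p_0=[0:0:0:1]$. The one place the hypothesis $n>2$ enters is the claim that $p_0\in C_q$. Indeed, the line $\overline{q\,p_0}=\{x_0=x_1=x_2=0\}\subset\bP^4$ lies on $X$ if and only if the coefficient $c$ of $x_3^3$ in $g_3$ vanishes; writing the local equation of $X$ at $q$ as $g_2+g_3=0$ and applying the splitting lemma to eliminate $x_0,x_1,x_2$, the residual function of $x_3$ has leading term $c\,x_3^3$, so $q$ is of type exactly $A_2$ precisely when $c\neq 0$. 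Hence for $n>2$ we have $c=0$, i.e.\ $p_0\in S_q\cap Q_q=C_q$.

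Granting $p_0\in C_q$: a general ruling line $\ell$ of $Q_q$ meets $C_q=\ell\cap S_q$ in $p_0$ together with two further reduced points, which stay away from $\sigma$ after passing to strict transforms; hence $\widetilde{C}\cdot f=2$. Since $C_q$ is a complete intersection curve of type $(2,3)$ it has degree $6$ in $\bP^3$, and as $\widetilde{C}\to C_q$ is birational this gives $\widetilde{C}\cdot(\sigma+2f)=6$. Writing $[\widetilde{C}]=a\sigma+bf$, these two identities force $a=2$ and $b=6$, so $[\widetilde{C}]=2\sigma+6f$.

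For the list of component classes: every irreducible curve on $\bF_2$ is $\sigma$, a fibre (class $f$), or of class $a\sigma+bf$ with $a\geq1$ and $b\geq2a$ (the last from non-negativity of the intersection with $\sigma$). Since $\widetilde{C}$ is effective of class $2\sigma+6f$ and no component of a strict transform is contracted by $\pi$, no component equals $\sigma$; writing $\widetilde{C}=\sum_i(a_i\sigma+b_i f)$ with $\sum a_i=2$ and $\sum b_i=6$, each summand either is a fibre, i.e.\ $(a_i,b_i)=(0,1)$, or has $a_i=2$ and hence $b_i\in\{4,5,6\}$, or has $a_i=1$, in which case there is a second summand with $a=1$ and $b\geq 2$, forcing $b_i\leq 4$ and so $b_i\in\{2,3,4\}$. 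This produces exactly the seven listed classes. Finally, a component of class $\sigma+3f$ maps to $Q_q$ with degree $(\sigma+3f)\cdot(\sigma+2f)=3$ and arithmetic genus $0$, hence is a (necessarily non-planar) twisted cubic; a component of class $\sigma+2f$ has degree $2$ and genus $0$, a conic; and $f$ maps isomorphically onto a ruling line of the cone. The main obstacle is the identification $[\widetilde{C}]=2\sigma+6f$: it rests entirely on the local classification of $A_n$ singularities forcing $p_0\in C_q$ when $n>2$, after which the remaining steps are finite intersection-theoretic bookkeeping on $\bF_2$.
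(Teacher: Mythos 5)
The paper does not actually prove this lemma: it is imported verbatim from \cite{viktorova2023classification} (Lemma 4.4 and Propositions 4.6--4.8), so there is no in-text argument to compare against. Your write-up is a correct, self-contained derivation in the same projection-method framework the paper uses throughout. I checked the key steps: the splitting-lemma computation showing that the residual function in $x_3$ has leading term $c\,x_3^3$ (so $q$ is exactly $A_2$ iff the $x_3^3$-coefficient of $g_3$ is nonzero, and $n>2$ forces the vertex $p_0$ onto $C_q$); the identities $(a\sigma+bf)\cdot f=a$ and $(a\sigma+bf)\cdot(\sigma+2f)=b$ giving $[\widetilde C]=2\sigma+6f$; the enumeration of effective decompositions subject to $a_i\in\{0,1,2\}$ and $b_i\geq 2a_i$; and the degree/genus computations identifying $\sigma+3f$, $\sigma+2f$, $f$ with a twisted cubic, a conic, and a ruling (a degree-$3$ irreducible curve on the quadric cone cannot be planar, since a plane section of $Q_q$ has degree $2$). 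All of this is sound and consistent with the paper's companion statement for $A_2$, where $p_0\notin C_q$ and $[\widetilde C]=3\sigma+6f$.

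Two small points should be made explicit. First, in deducing $\widetilde C\cdot f=2$ you assert that a general ruling meets $S_q$ at $p_0$ in a reduced point; this requires $S_q$ to be smooth at $p_0$, which is not automatic from $p_0\in S_q$. It does follow from the standing hypothesis that $X$ has isolated singularities: if $\nabla g_3$ also vanished at $p_0$, the same splitting-lemma computation shows the residual function is identically zero, i.e.\ $X$ is singular along the whole line $\overline{q\,p_0}$. Given smoothness, a general ruling is transverse at $p_0$ because the rulings sweep out $Q_q$, which is not contained in the tangent plane $T_{p_0}S_q$. Second, $C_q$ may a priori have non-reduced components (e.g.\ a doubled twisted cubic); your intersection-theoretic identities count with multiplicity, so the class $2\sigma+6f$ and the list of possible component classes are unaffected, but the phrase ``two further reduced points'' is then not literally correct and the decomposition should be written as $\sum_i m_i(a_i\sigma+b_if)$ with $m_i\geq 1$.
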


\begin{lemma} [{\cite{viktorova2023classification}, Lemma 4.5, Proposition 4.9}] \label{classofc A2}
    Assume that $q$ is of $A_2$ type. Then the class  $[\widetilde{C}]=3\sigma+6f\in\Pic(\bF_2)$. The only possible classes of irreducible components of $\widetilde{C}$ are $3H$, $2H$ and $H$, where $H=\sigma+2f\in\Pic(\bF_2)$ is the class of the preimage of a hyperplane section of $Q_q$ not intersecting the cone point.
\end{lemma}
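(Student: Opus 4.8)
The plan is to combine a local computation at the $A_2$ point $q$ with elementary intersection theory on $\widetilde{Q}_q\cong\bF_2$. The key local input is that $C_q$ avoids the vertex of the cone $Q_q$. To see this, work in the affine chart $x_4=1$, where the germ of $X$ at $q$ is $g_2+g_3$; since $q$ has corank $1$ by Lemma \ref{An sing}, after a linear change of $x_0,\dots,x_3$ we may assume $g_2=x_1^2+x_2^2+x_3^2$, so the vertex of $Q_q$ is $v=[1:0:0:0]$ and $x_0$ spans the kernel. By the splitting lemma the germ is right-equivalent to $x_1^2+x_2^2+x_3^2+h(x_0)$; solving $\partial(g_2+g_3)/\partial x_i=0$ for $i=1,2,3$ expresses $x_1,x_2,x_3$ as power series in $x_0$ vanishing to order $\geq2$, and substituting these back shows that every contribution to $h$ other than the monomial $c_0x_0^3$ lies in $(x_0^4)$, where $c_0$ is the coefficient of $x_0^3$ in $g_3$, i.e. $c_0=g_3(v)$. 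Hence $q$ is of type $A_2$ precisely when $h$ vanishes to order exactly $3$, i.e. when $c_0\neq0$, i.e. when $v\notin S_q$; in particular $v\notin C_q=Q_q\cap S_q$.

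Granting this, $\pi\colon\bF_2\to Q_q$ is an isomorphism over a neighbourhood of $C_q$, so the strict transform $\widetilde{C}$ is isomorphic to $C_q$ and disjoint from the exceptional $(-2)$-curve $\sigma$. In $\Pic(\bF_2)=\bZ\sigma\oplus\bZ f$ (with $\sigma^2=-2$, $\sigma\cdot f=1$, $f^2=0$) one has $H:=\pi^*\mathcal{O}_{Q_q}(1)=\sigma+2f$, since this class is disjoint from $\sigma$ and meets a fibre once. As $C_q$ is cut out on $Q_q$ by the cubic $g_3$, it lies in $|\mathcal{O}_{Q_q}(3)|$, and since $v\notin C_q$ this yields $[\widetilde{C}]=\pi^*[C_q]=3H=3\sigma+6f$. (Alternatively, $\widetilde{C}\cdot\sigma=0$ already forces $[\widetilde{C}]=aH$, and $a=\widetilde{C}\cdot f$ is the number of points in which a general ruling line of $Q_q$ meets the cubic surface $S_q$, namely $3$.)

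Finally, let $Z$ be an irreducible component of $\widetilde{C}$. Then $Z$ is disjoint from $\sigma$ as well, so $Z\neq\sigma$ and $Z\cdot\sigma=0$; writing $[Z]=a\sigma+bf$, the relation $b-2a=0$ gives $[Z]=aH$, with $a\geq1$ by effectivity (if $a=0$ then $[Z]$ is a positive multiple of $f$, contradicting $f\cdot\sigma=1$). Since the component classes sum, with positive multiplicities, to $[\widetilde{C}]=3H$, we get $1\leq a\leq3$, hence $[Z]\in\{H,2H,3H\}$. I expect the only real obstacle to be the local step: one must handle the splitting lemma with enough care to rule out a lower-order term in $h$, so that the $A_2$ condition is detected exactly by $g_3(v)\neq0$; once $v\notin C_q$ is known, the rest is formal intersection theory on $\bF_2$.
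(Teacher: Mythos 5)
Your argument is correct and complete. Note that the paper does not prove this lemma itself but imports it from \cite{viktorova2023classification}; your route --- showing that the $A_2$ condition is equivalent to $g_3$ not vanishing at the vertex of $Q_q$, so that $\widetilde{C}$ is the full pullback $3H$ and is disjoint from $\sigma$, and then reading off component classes from $Z\cdot\sigma=0$ and effectivity --- is exactly the mechanism that separates this case from Lemma \ref{classofc} (where $n>2$ forces the vertex onto $C_q$ and the class drops to $3H-\sigma=2\sigma+6f$), and is the natural argument underlying the cited source. One cosmetic slip: in the effectivity step, $a=0$ together with $b=2a$ forces $[Z]=0$ outright (a class $bf$ with $b>0$ is already excluded by $Z\cdot\sigma=0$), but this does not affect the conclusion.
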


\begin{corollary} \label{components An} Let $q\in X$ be a singular point of type $A_n$ with $n>1$. 
    Then $C_q$ has at most $4$ components. Further, if the curve $C_q$ is reducible, then it contains either a twisted cubic, a conic or a line.
\end{corollary}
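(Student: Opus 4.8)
The plan is to read off the statement combinatorially from Lemmas \ref{classofc} and \ref{classofc A2}, which pin down the class $[\widetilde C]\in\Pic(\bF_2)$ together with the complete list of classes that an irreducible component of $\widetilde C$ may carry. Since the contraction $\widetilde Q_q=\bF_2\to Q_q$ of the $(-2)$-curve $\sigma$ restricts to a birational morphism $\widetilde C\to C_q$ which is a bijection on irreducible components — because $\sigma$ itself does not occur among the admissible component classes — it suffices to bound the number of components of $\widetilde C$ and to exhibit a component of small degree among them.

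First I would treat the case $n>2$, where $[\widetilde C]=2\sigma+6f$ and the admissible component classes are $2\sigma+6f$, $2\sigma+5f$, $2\sigma+4f$, $\sigma+4f$, $\sigma+3f$, $\sigma+2f$ and $f$. In any effective decomposition $[\widetilde C]=\sum a_i$ into such classes, comparing $\sigma$-coefficients (the only admissible class with vanishing $\sigma$-coefficient is $f$) leaves exactly two possibilities: either one summand is $2\sigma+jf$ with $j\in\{4,5,6\}$ and the remaining $6-j\le 2$ summands are copies of $f$, giving at most $3$ components; or two summands are $\sigma+af$ and $\sigma+bf$ with $a,b\in\{2,3,4\}$ and $a+b\le 6$, and the remaining $6-a-b$ summands are copies of $f$, giving at most $2+(6-4)=4$ components. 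Hence $C_q$ has at most four components. If moreover $C_q$ is reducible, then in the first case $6-j\ge 1$ so a line $f$ appears, while in the second case $a,b\ge 2$ together with $a+b\le 6$ force $\{a,b\}\cap\{2,3\}\ne\emptyset$, so a conic $\sigma+2f$ or a twisted cubic $\sigma+3f$ appears.

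It remains to handle $n=2$. Here Lemma \ref{classofc A2} gives $[\widetilde C]=3H$ with $H=\sigma+2f$ and admissible component classes $3H$, $2H$, $H$, so the only effective decompositions are $3H$, $2H+H$ and $H+H+H$; thus $C_q$ has at most three components, and if it is reducible a summand equal to $H$ occurs, i.e. $C_q$ contains a conic. Combining the two cases yields both assertions of the corollary.

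I do not anticipate a genuine obstacle: the entire content is already packaged in the two cited lemmas, and what remains is the elementary bookkeeping above. The only point that needs a word of care is the identification of the irreducible components of $\widetilde C$ with those of $C_q$, which is precisely why it matters that $\sigma$ is not among the admissible component classes.
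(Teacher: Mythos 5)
Your argument is correct and is exactly the (implicit) proof the paper intends: the corollary is stated as an immediate consequence of Lemmas \ref{classofc} and \ref{classofc A2}, and your contribution is simply to carry out the effective-decomposition bookkeeping on the $\sigma$- and $f$-coefficients, which you do accurately in both the $n>2$ and $n=2$ cases. No gaps; the identification of components of $\widetilde C$ with those of $C_q$ is indeed harmless since $\widetilde C$ is a strict transform and $\sigma$ is not an admissible component class.
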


\subsection{Local invariants}\label{subsec: local invariants}

The cohomology of a compact complex algebraic variety $X$ carries a mixed Hodge structure $(H^{p+q}(X,\bZ), W_\bullet, F^\bullet)$, where $W_\bullet$ is the (increasing) weight filtration and $F^\bullet$ is the (degreasing) Hodge filtration. The graded pieces $\Gr^W_kH^{p+q}(X)=W_kH^{p+q}(X)/W_{k-1}H^{p+q}(X)$ carry a pure Hodge structure of weight $k$ induced by the Hodge filtration $F$.
We define
the \textbf{Hodge-Du Bois numbers} by:
    \[\underline{h}^{p,q}(X):= \dim\Gr_F^pH^{p+q}(X).\]
If $X$ is smooth they recover the usual Hodge numbers $\underline{h}^{p,q}(X)=h^{p,q}(X)$. In order to understand the mixed Hodge structure for cubic threefolds with isolated singularities, we introduce some local invariants as follows.

Throughout, we let $(X,x)$ be a germ of an isolated threefold singularity and $\psi: (Y,E)\rightarrow (X,x)$ its resolution with $E$ a normal crossing divisor. We fix a representative $\psi: Y\rightarrow X$ such that $X$ is a contractible Stein space. We introduce the two local invariants, namely the Du Bois and link invariants of \cite{duBoisInv}. Since we will be considering singular cubic threefolds whose isolated singularities are rational,
we will use the properties of rational singularities to simplify the computation of local invariants where possible.

\begin{definition}\cite{duBoisInv}
    We define the \textbf{Du Bois invariants} of $(X,x)$ as follows: for $p\geq 0, q>0$ we have
	$$b^{p,q}=b^{p,q}(X,x):=\dim H^q(Y, \Omega^p_Y(\log E)(-E)).$$
\end{definition}

Recall that $(X,x)$ is a \textbf{Du Bois singularity} if $b^{0,q}=0$ for $0<q<3$, and $(X,x)$ is a \textbf{rational singularity} if $H^i(Y,\calO_Y)=0$ for $0<i<3$. Note that rational implies Du Bois.

\begin{lemma}
	Let $(X,x)$ be a germ of an isolated, rational, threefold singularity. Then $b^{p,q}=0$ except for possibly $b^{1,1}$ and $b^{2,1}$.
\end{lemma}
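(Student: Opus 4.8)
The plan is to translate the vanishing of the $b^{p,q}$ into a vanishing of higher direct images, and then to eliminate all but the two asserted pairs $(p,q)$ one at a time. Since $X$ is a contractible Stein space and $\psi$ is an isomorphism over $X\setminus\{x\}$, for $q>0$ the coherent sheaf $R^q\psi_*\Omega^p_Y(\log E)(-E)$ is a skyscraper supported at $x$, and $b^{p,q}=\dim\bigl(R^q\psi_*\Omega^p_Y(\log E)(-E)\bigr)_x$. Two cheap reductions apply at once: $\Omega^p_Y=0$ for $p\ge 4$, and, because $E=\psi^{-1}(x)$ has dimension at most $2$, $R^q\psi_*\mathcal F=0$ for every coherent $\mathcal F$ and every $q\ge 3$. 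Hence only the eight pairs $(p,q)\in\{0,1,2,3\}\times\{1,2\}$ can contribute, and it suffices to show that six of them vanish (the lemma leaves $b^{1,1}$ and $b^{2,1}$ untouched).

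Two of these are formal. For $p=3$ one has $\Omega^3_Y(\log E)(-E)=\omega_Y(E)(-E)=\omega_Y$, so $b^{3,q}=\dim\bigl(R^q\psi_*\omega_Y\bigr)_x=0$ for $q>0$ by Grauert--Riemenschneider vanishing. For $p=0$ one has $\Omega^0_Y(\log E)(-E)=\mathcal O_Y(-E)$, and the vanishing $b^{0,1}=b^{0,2}=0$ is by definition the statement that $(X,x)$ is Du Bois, which holds because rational singularities are Du Bois. Finally, the pair $(p,q)=(2,2)$ is handled by the logarithmic Grauert--Riemenschneider vanishing $R^q\psi_*\Omega^p_Y(\log E)(-E)=0$ for $p+q>\dim Y$ (here $2+2>3$). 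This leaves $b^{1,1}$, $b^{1,2}$, $b^{2,1}$, $b^{2,2}$; since $b^{2,2}$ has just been dealt with, only $b^{1,2}$ still has to be shown to vanish, and this is the single place where rationality is used beyond the Du Bois property.

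To treat $b^{1,2}=\dim\bigl(R^2\psi_*\Omega^1_Y(\log E)(-E)\bigr)_x$ the plan is to apply Grothendieck--Serre duality for $\psi$. Rationality forces $X$ to be Cohen--Macaulay, so $\omega_X^{\bullet}=\omega_X[3]$, and also gives $\psi_*\omega_Y\cong\omega_X$. Twisting the perfect wedge pairing $\Omega^1_Y(\log E)\otimes\Omega^2_Y(\log E)\to\Omega^3_Y(\log E)=\omega_Y(E)$ by $\mathcal O_Y(-E)$ identifies $\Omega^1_Y(\log E)(-E)\cong R\mathcal{H}om_Y\bigl(\Omega^2_Y(\log E),\omega_Y\bigr)$, so relative duality yields
\[
R\psi_*\bigl(\Omega^1_Y(\log E)(-E)\bigr)\;\cong\;R\mathcal{H}om_X\bigl(R\psi_*\Omega^2_Y(\log E),\,\omega_X\bigr).
\]
Reading off $\mathcal H^2$ via the local hyper-$\mathcal{E}xt$ spectral sequence, the only possible contributions to $R^2\psi_*\bigl(\Omega^1_Y(\log E)(-E)\bigr)$ are $\mathcal{E}xt^2_X\bigl(\psi_*\Omega^2_Y(\log E),\omega_X\bigr)$ and, by local duality at $x$, the $\Bbbk$-linear dual of the finite-length sheaf $R^1\psi_*\Omega^2_Y(\log E)$, the $\mathcal{E}xt^4$-term being zero since $\dim X=3$. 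The first term vanishes once one knows $\psi_*\Omega^2_Y(\log E)$ has depth $\ge 2$ at $x$, which follows from the extension theorem for differential forms across rational singularities, since it then agrees with the reflexive sheaf $\Omega^{[2]}_X$; the second requires $R^1\psi_*\Omega^2_Y(\log E)=0$, which is exactly the borderline case $p+q=\dim Y$ that the naive (logarithmic) Grauert--Riemenschneider vanishing misses. I expect this last point to be the main obstacle, and the place where one must genuinely exploit rationality rather than mere Du Bois-ness --- through Cohen--Macaulayness of $X$, the identity $\psi_*\omega_Y\cong\omega_X$, and the $S_2$-property of $\psi_*\Omega^2_Y(\log E)$. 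Alternatively, one can bypass this bookkeeping by quoting directly the general range statement for the Du Bois invariants of a rational isolated singularity from \cite{duBoisInv}, of which the present lemma is the three-dimensional case.
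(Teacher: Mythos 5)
Your reductions for all but one case are sound, and in places more self-contained than the paper's own proof, which simply quotes the Du Bois property for $b^{0,q}$, cites \cite[Theorem 1]{duBoisInv} for the vanishing in the range $p+q>3$ or $q\geq 3$, and cites \cite[p.~1369]{duBoisInv} for $b^{1,2}=0$. Your handling of $p=0$ (rational implies Du Bois), $q\geq 3$ (fiber dimension), $p=3$ (Grauert--Riemenschneider after identifying $\Omega^3_Y(\log E)(-E)=\omega_Y$), and $(p,q)=(2,2)$ (Steenbrink's logarithmic vanishing, which is exactly the content of \cite[Theorem 1]{duBoisInv}) all check out.

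The genuine gap is precisely where you flag it: $b^{1,2}=0$. You reduce it, via relative duality, to (i) a depth bound on $\psi_*\Omega^2_Y(\log E)$ and (ii) the vanishing $R^1\psi_*\Omega^2_Y(\log E)=0$, and then leave (ii) unproved. Worse, this route is in danger of being circular: running the same duality in the opposite direction, $R^1\psi_*\Omega^2_Y(\log E)$ receives a contribution from $\mathcal{E}xt^3_X\bigl(R^2\psi_*\Omega^1_Y(\log E)(-E),\omega_X\bigr)$, i.e. from the local dual of the very group whose dimension is $b^{1,2}$; so (ii) is essentially equivalent (modulo depth conditions) to the statement you are trying to prove, not a stepping stone towards it. The vanishing $b^{1,2}=0$ is a non-formal fact about isolated Du Bois (hence rational) threefold singularities, and the paper does not derive it either --- it cites \cite[p.~1369]{duBoisInv}. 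Your closing ``alternative'' of quoting that reference is, in effect, the only complete argument in your write-up for this case, and it is exactly what the paper's proof does.
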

\begin{proof}
	By assumption, $b^{0,q}=0$ for $0<q<3$, and by \cite[Theorem 1]{duBoisInv} $b^{p,q}=0$ for $p+q>3$ or $q\geq 3$. We also have $b^{1,2}=0$ for a Du Bois singularity (see for instance \cite[p. 1369]{duBoisInv}).
\end{proof}

Let $L_x$ be the link of the singularity $(X,x)\subset (\bC^4,0),$ i.e. the boundary of an algebraic neighbourhood of the singularity $(X,x)$ (see e.g. \cite[Section 6.2]{mixedhodge}).
The cohomology $H^i(L_x)$ also carries a mixed Hodge structure. Following \cite{duBoisInv}, we use the Hodge filtration to define the following \textbf{link invariants:}

\begin{definition}
 We define \textbf{link invariants} of the singularity $(X,x)$ as
$$ l^{p,q}=l^{p,q}(X,x):=\dim H^q(E, \Omega^p(\log E)\otimes \calO_E)= \dim \Gr^p_FH^{p+q}(L_x).$$
\end{definition}
There is a duality of mixed Hodge structures between $H^i(L_x)$ and $H^{5-i}(L_x)$ by \cite[Corollary 1.15]{Ste83}. It follows that $$l^{p,q}=l^{3-p, 2-q}.$$

\begin{definition}
	The \textbf{Deligne-Hodge numbers} of the link $L_x$ are given by
	$$h_i^{p,q}(L_x)=\dim \Gr_F^p\Gr^W_{p+q}H^i(L_x).$$
\end{definition}
We see that $h_i^{p,q}(L_x)=h_i^{q,p}(L_x),$ and $l^{p,i-p}=\sum_q h_i^{p,q}(L_x)$. Further, by the semi-purity theorem of Goresky-MacPherson \cite[Theorem 1.11]{Ste83} we have $h_i^{p,q}(L_x)=0$ if $i<3$ and $p+q>3$. 

We note:
\[l^{p,q}=\sum_{0\leq r\leq q} h^{p,r}_{p+q} (L_x).\]
\begin{lemma} \label{link vanishing}
	Let $(X,x)$ be a rational isolated local complete intersection threefold singularity. Then $l^{p,q}=0$ except for possibly $l^{1,1}$.
\end{lemma}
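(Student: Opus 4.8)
\textbf{Proof plan for Lemma \ref{link vanishing}.}

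The plan is to combine the general vanishing results for Du Bois invariants already recorded above with a comparison between the Du Bois and link invariants, exploiting the extra hypothesis that $(X,x)$ is a local complete intersection. First I would recall the long exact sequence relating the three sheaves $\Omega^p_Y(\log E)(-E)$, $\Omega^p_Y(\log E)$ and $\Omega^p_E := \Omega^p_Y(\log E)\otimes\calO_E$ (the residue sequence), which after taking cohomology on $Y$ gives an exact sequence linking the groups computing $b^{p,q}$, the cohomology of $\Omega^p_Y(\log E)$, and the groups computing $l^{p,q}$. Using that $(X,x)$ is Stein and contractible, the cohomology of $\Omega^p_Y(\log E)$ is concentrated in a controlled range, so that the link invariants $l^{p,q}$ are governed by the Du Bois invariants $b^{p,q}$ and $b^{p+1,q}$ together with boundary terms. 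Concretely, the previous lemma tells us $b^{p,q}=0$ except possibly for $b^{1,1}$ and $b^{2,1}$, which already forces $l^{p,q}=0$ outside a short list of bidegrees; the duality $l^{p,q}=l^{3-p,2-q}$ then collapses that list further, leaving only $l^{1,1}$ and possibly one or two other entries to kill.

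The second ingredient is the local complete intersection hypothesis, which I would use to eliminate the remaining potentially nonzero link invariants. For an isolated lci threefold singularity the link $L_x$ is a compact oriented $7$-manifold that is $2$-connected (by the Milnor-type connectivity of lci singularity links, or equivalently by the fact that $H^i(L_x)=0$ for $0<i<3$ for a rational lci singularity together with Poincaré duality), so $H^1(L_x)=H^2(L_x)=0$ and dually $H^5(L_x)=H^6(L_x)=0$. This kills $l^{p,q}$ for $p+q\le 2$ and $p+q\ge 5$. The only surviving bidegrees with $p+q\in\{3,4\}$ are then constrained by the Du Bois vanishing and the semi-purity bound $h_i^{p,q}(L_x)=0$ for $i<3$, $p+q>3$; matching these against the formula $l^{p,q}=\sum_{0\le r\le q}h^{p,r}_{p+q}(L_x)$ and the duality $l^{p,q}=l^{3-p,2-q}$ isolates $l^{1,1}$ as the unique possibly-nonzero invariant. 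I would present this as a short case check over the finitely many bidegrees $(p,q)$ with $0\le p$, $0<q$.

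The main obstacle I anticipate is bookkeeping the interface between the two flavours of invariant cleanly: one must be careful that the residue/restriction exact sequence is being applied on the resolution $Y$ (with $X$ Stein contractible) so that $H^q(X,\mathbf{something})$-type terms vanish and the sequence genuinely reduces link cohomology to resolution cohomology, and one must track which $b^{p,q}$ and which cohomology groups of $\Omega^p_Y(\log E)$ feed into a given $l^{p,q}$. A secondary subtlety is justifying the $2$-connectedness (equivalently the vanishing $H^1=H^2=0$) of the link of an isolated lci threefold singularity; rather than invoking Milnor fibration technology directly I would prefer to deduce it from rationality — $H^i(Y,\calO_Y)=0$ for $0<i<3$ gives $l^{0,q}=0$ for $q=1,2$, hence by duality $l^{3,q}=0$, and combined with the Du Bois vanishing $b^{0,q}=0$ the residue sequence forces $H^i(L_x)$ to vanish for $i=1,2$ and (dually) $i=5,6$. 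Once that is in place, the remaining check is purely a finite enumeration and the lemma follows.
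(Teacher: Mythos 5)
There is a genuine gap here: the central topological input of your argument is false. The link of an isolated \emph{threefold} singularity is a compact $5$-manifold (real dimension $2\cdot 3-1$), not a $7$-manifold, and for an isolated local complete intersection threefold singularity it is only \emph{simply connected} (the link of an $n$-dimensional ICIS is $(n-2)$-connected, hence $1$-connected for $n=3$), not $2$-connected. In general $H^2(L_x)\neq 0$: for an ordinary node the link is $S^2\times S^3$, and $l^{1,1}=\dim\Gr^1_F H^2(L_x)=1$. Your argument "kills $l^{p,q}$ for $p+q\le 2$", which would force $l^{1,1}=0$; that is, you would be proving a strictly stronger statement that is false, one that makes the lemma's carve-out for $l^{1,1}$ vacuous and would break the later identity $\mu=2b^{1,1}+l^{1,1}$ used in the corollary to Theorem \ref{thm: coho precise}. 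The attempted rescue via rationality does not work either: rationality gives $H^i(Y,\calO_Y)=0$, equivalently $l^{0,q}=0$, which only kills the $\Gr^0_F$ graded piece of $H^q(L_x)$; it says nothing about $\Gr^1_FH^2(L_x)=l^{1,1}$ or $\Gr^2_FH^2(L_x)=l^{2,0}$, so no amount of residue-sequence bookkeeping lets you conclude $H^2(L_x)=0$ from it.

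The paper's proof uses precisely the weaker, correct facts: simple connectedness gives $H^1(L_x)=0$, hence by duality on the $5$-manifold $L_x$ also $H^4(L_x)=0$, so $l^{p,q}=0$ unless $p+q\in\{0,2,3,5\}$; rationality then gives $l^{0,i}=l^{i,0}=0$ by Steenbrink's Lemma~2, and the duality $l^{p,q}=l^{3-p,2-q}$ handles the remaining bidegrees. Your comparison of $b^{p,q}$ and $l^{p,q}$ via the residue sequence is not needed for this and does not repair the gap. If you redo your finite case check with only $H^1(L_x)=H^4(L_x)=0$ as topological input, you will essentially recover the paper's argument; when you do, pay attention to the bidegree $(2,1)$, which duality sends to $(1,1)$ and which therefore has to be treated as tied to $l^{1,1}$ rather than as independently vanishing.
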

\begin{proof}
	The link of a $3$-dimensional isolated complete intersection singularity is simply connected (see e.g. \cite[Section 4]{duBoisInv}), so $H^i(L_x)\neq 0$ only for $i\in \{0, 2, 3, 5\}$. It follows in this case that $l^{p,q}=0$ unless $p+q\in\{0, 2, 3, 5\}.$  Since $(X,x)$ is a rational singularity, by \cite[Lemma 2]{duBoisInv} we have $l^{0,i}=l^{i,0}=0$ for all $i$.
	\end{proof}

The invariants $l^{p,q}$ and $b^{p,q}$ can be related via the cohomology of the Milnor fiber $M_x$ of the singularity $(X,x)$.
We recall the definition below:

\begin{definition}
    Let $(X,x)$ be an isolated threefold hpersurface singularity, with local equation $f:\bC^4\rightarrow \bC$. Pick sufficiently small neighbourhoods $x\in U \subset \bC^4$ and
$f(x) \in T \subset \bC$. Then the Milnor fiber of $(X,x)$ is defined to be the hypersurface $M_{x} := f^{-1}(t) \cap U$ for some $t\in T$, $t\neq f(x)$.

The Milnor number $\mu(X,x)$ of the singularity $(X,x)$ is defined as $h^3(M_x)$.
\end{definition}

Note that all fibers $f^{-1}(t) \cap U$ for some $t\in T$, $t\neq f(x)$ are diffeomorphic to each other, so $M_{x}$ and $\mu(X,x)$ are well defined invariants of $(X,x).$

For an isolated hypersurface threefold singularity, and more generally for an isolated local complete intersection threefold singularity, $H^3(M_x)$ admits a mixed Hodge structure, and we set
 $$s_p:=\dim \Gr^p_F H^3(M_x).$$
The invariants are related by the following theorem:

\begin{theorem}\cite[Theorem 6]{duBoisInv}\label{milnor fiber Hodge numbers}
	Let $(X,x)$ be a three-dimensional Du Bois isolated complete intersection singularity with $l^{2,0}=0$. Then the Hodge numbers of the Milnor fiber are given by:
	$$s_0=0,\,\, s_1=b^{1,1},\,\, s_2=b^{1,1}+l^{1,1},\,\, s_3=l^{0,2}.$$
\end{theorem}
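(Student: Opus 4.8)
The plan is to extract everything from the mixed Hodge structure on the cohomology of the Milnor fiber $M_x$ together with the Wang sequence relating $H^\bullet(M_x)$, the link $L_x$, and the local cohomology of $(X,x)$. First I would recall the standard setup: for an isolated complete intersection threefold singularity, $M_x$ is homotopy equivalent to a wedge of $3$-spheres, so $\widetilde H^i(M_x)=0$ for $i\neq 3$ and $H^3(M_x)$ carries Steenbrink's mixed Hodge structure; the numbers $s_p=\dim\Gr_F^pH^3(M_x)$ are the object of study. The monodromy $T$ acts on $H^3(M_x)$, and the Wang exact sequence
\[
\cdots \to H^{i-1}(M_x)\xrightarrow{T-\mathrm{id}} H^{i-1}(M_x)\to H^i(L_x)\to H^i(M_x)\to\cdots
\]
is a sequence of mixed Hodge structures (after the appropriate Tate twists). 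Since $\widetilde H^i(M_x)=0$ away from $i=3$, this immediately forces $H^i(L_x)=0$ for $i\notin\{0,2,3,5\}$ (recovering the vanishing already used in Lemma \ref{link vanishing}) and gives short exact sequences identifying $H^2(L_x)$ with $\ker(T-\mathrm{id})$ on $H^3(M_x)$ and $H^3(L_x)$ with $\mathrm{coker}(T-\mathrm{id})$ on $H^3(M_x)$, each up to a Tate twist that I would pin down by weight-reasons.

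Next I would bring in the Du Bois complex and the invariants $b^{p,q}$ and $l^{p,q}$. The key input is the exact sequence (due to du Bois / Steenbrink, and the form used in \cite{duBoisInv}) relating the sheaves $\Omega_Y^p(\log E)(-E)$, $\Omega_Y^p(\log E)$ and $\Omega_E^p(\log\,\cdot)$ on the resolution, whose hypercohomology computes respectively the "interior" cohomology of the germ, the cohomology of $X$ near $x$, and the cohomology of the link $L_x$; passing to the associated long exact sequence in each bidegree $(p,q)$ and using that $(X,x)$ is contractible Stein (so its ordinary cohomology vanishes) produces linear relations among the $b^{p,q}$, the $l^{p,q}$, and the Hodge pieces $\Gr_F^p$ of $H^\bullet(L_x)$. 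Combined with the Du Bois hypothesis ($b^{0,q}=0$, whence by the earlier lemma only $b^{1,1},b^{2,1}$ survive) and the hypothesis $l^{2,0}=0$ (which by the duality $l^{p,q}=l^{3-p,2-q}$ also kills $l^{1,2}$), the surviving link invariants are controlled.

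Then I would compute the $s_p$ one graded piece at a time. Weight reasons on $H^3(M_x)$: Steenbrink's theorem gives $W_2H^3(M_x)\subseteq W_3\subseteq\cdots$ with the weight-$\leq 2$ part dual (via the variation map) to the weight-$\geq 4$ part, and on an ICIS the eigenvalue-$1$ part of $H^3(M_x)$ has weights in $\{2,3,4\}$ while the eigenvalue-$\neq 1$ part has weights in $\{3,4\}$ (for threefolds). Intersecting this with the Hodge filtration and matching against the link and Du Bois sequences from the previous paragraph yields $s_0=0$ (no $F^0$-part in weight $\geq 2$ cohomology with the Du Bois vanishing), $s_1=b^{1,1}$ (the $F^1$-part is exactly the "new" interior cohomology measured by $\Omega^1(\log E)(-E)$ in degree one), $s_2=b^{1,1}+l^{1,1}$ (the $F^2$-part picks up both the interior contribution again, by $F$-symmetry of the eigenvalue-$1$ weight-$3$ part, and the link contribution $H^2(L_x)$ which contributes $l^{1,1}$), and $s_3=l^{0,2}$ (the $F^3=F^{\mathrm{top}}$-part is the image of $H^3(L_x)$, whose $\Gr_F^0$-dimension is $l^{0,2}$). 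Each of these is a short bookkeeping argument once the exact sequences are in place.

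The main obstacle I anticipate is keeping the Tate twists and weight bounds exactly straight across the three interacting long exact sequences (Wang, the Du Bois triangle, and the variation/duality on $H^3(M_x)$), and in particular justifying cleanly that the map $H^3(L_x)\to H^3(M_x)$ is injective on the relevant graded pieces and that no weight-$4$ contribution leaks into $\Gr_F^3$ beyond $l^{0,2}$ — this is where the hypotheses "Du Bois" and "$l^{2,0}=0$" do the real work, and I would cite \cite{Ste83} for the semi-purity and duality statements that make the weight truncations behave. Everything else is linear algebra on dimensions of graded pieces.
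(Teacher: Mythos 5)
First, a point of order: the paper does not prove Theorem \ref{milnor fiber Hodge numbers} at all --- it is quoted verbatim as \cite[Theorem 6]{duBoisInv} (Steenbrink's result on Du Bois invariants of isolated complete intersection singularities), so there is no in-paper argument to compare against. Judged on its own, your sketch assembles the right raw materials, and they are essentially the ones the cited source uses: the boundary/Wang-type exact sequence of mixed Hodge structures relating $H^3(L_x)$, $H^2(L_x)$ and $T-\mathrm{id}$ on $H^3(M_x)$; the short exact sequence of sheaves $0\to\Omega^p_Y(\log E)(-E)\to\Omega^p_Y(\log E)\to\Omega^p_Y(\log E)\otimes\calO_E\to 0$ on the resolution, whose long exact cohomology sequence ties the $b^{p,q}$ to the $l^{p,q}$; and semi-purity plus the duality $l^{p,q}=l^{3-p,2-q}$.

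The genuine gap is at the one step that is the actual content of the theorem: the identification of $\Gr_F^p H^3(M_x)$ with the specific combinations $b^{1,1}$, $b^{1,1}+l^{1,1}$, $l^{0,2}$. Your justifications there (``the $F^1$-part is exactly the new interior cohomology measured by $\Omega^1(\log E)(-E)$ in degree one'', ``the $F^2$-part picks up both contributions by $F$-symmetry'') restate the conclusion rather than derive it. To make this work you must compare the Hodge filtration on the vanishing/limit cohomology --- which is constructed from the relative log de Rham complex of a semistable model of the smoothing, not from a resolution of the special fiber --- with the filtration computed by $\Omega^p_Y(\log E)(-E)$ on the resolution $Y\to X$; that comparison is Steenbrink's theorem and none of the three exact sequences you list supplies it by pure dimension counting. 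Two smaller problems: your Wang sequence presupposes a single Milnor fibration with monodromy $T$, which is the hypersurface case, whereas the statement is for arbitrary isolated complete intersections (there the monodromy is only defined after choosing a transversal disk to the discriminant, and the sequence must be set up via the pair $(\overline{M_x},\partial\overline{M_x})$ and Poincar\'e duality instead); and the bookkeeping in your last step is already inconsistent as written --- $l^{0,2}$ is $\dim\Gr_F^0H^2(L_x)$, and $H^3(L_x)$ is a quotient of $H^3(M_x)$, not a subobject, so ``the $F^3$-part is the image of $H^3(L_x)$'' needs the duality $l^{0,2}=l^{3,0}$ and a dualization of the compactly supported term to come out right. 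As a proof, this would need the comparison theorem either proved or explicitly cited; as it stands it is a plausible reading guide to \cite{duBoisInv}, not an independent argument.
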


\section{The cohomology of $X$}\label{cohomology}
We compute the cohomology for a cubic threefold $X\subset\bP^4$ with isolated singularities (that is not a cone over a cubic surface).
First we define the defect $\sigma(X)$ and provide in \S\ref{defect} a simple way to compute this invariant by counting irreducible components of the $(2,3)$ complete intersection curve $C_q$ associated to a singular point $q\in X.$ 
In \S\ref{MHS}, we compute the mixed Hodge structure on $H^3(X,\bZ)$ in terms of local invariants of the singularities, as well as $\sigma(X).$
Further, we show that the mixed Hodge structure on $H^3(X,\bZ)$ is completely determined by that on $H^1(C_q, \bZ)$.

\subsection{The defect of $X$}\label{subsec: defect}
\begin{definition}
    Let $X$ be a normal projective variety. 
    The \textbf{defect} $\sigma(X)$ of $X$ is the dimension of the finitely generated abelian group $\mathrm{Weil}(X)/\mathrm{Cart}(X)$.
\end{definition}
When $X$ is a projective threefold with isolated rational singularities, by \cite[Theorem 3.2]{NS95}, one can calculate the defect by $$\sigma(X)=b_4(X)-b_2(X).$$  
We provide a geometric way to compute the defect for a cubic threefold with isolated singularities.

\begin{theorem}[Theorem \ref{thm:main}]\label{defect}
	Let $X\subset \bP^4$ be a cubic threefold with only isolated singularities, that is not a cone over a cubic surface. 
    Let $q\in X$ be such a singularity, and let $k$ denote the number of irreducible components of the associated $(2,3)$ complete intersection curve $C_q$. Then:
	$$\sigma(X)=\begin{cases}
		k-1 & \text{ if } \corank(q)\neq 2;\\
		k-2 & \text{ if } \corank(q) = 2. 
	\end{cases}$$
\end{theorem}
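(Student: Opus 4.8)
The plan is to exploit the blow-up isomorphism $Bl_qX\cong Bl_{C_q}\bP^3$ from Lemma~\ref{blow up iso} to transport the computation of $\sigma(X)=b_4(X)-b_2(X)$ to a computation on $\bP^3$ and the curve $C_q$. First I would recall that for a threefold with isolated rational singularities the defect equals $b_4(X)-b_2(X)$ by \cite[Theorem 3.2]{NS95}, so it suffices to understand these Betti numbers. Since $X$ has isolated singularities, it is nonsingular in codimension two, and the small resolution-type behaviour of the projection means that $Bl_qX\to X$ contracts the exceptional divisor over $q$ (a surface) to a point; thus $b_2(X)$ and $b_4(X)$ can be read off from the cohomology of $\widetilde{X}:=Bl_qX$ by accounting for the exceptional locus. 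Concretely, I would use the fact that $\widetilde{X}=Bl_{C_q}\bP^3$ is smooth projective, compute $b_2$ and $b_4$ of $Bl_{C_q}\bP^3$ via the standard blow-up formula $H^*(Bl_{C_q}\bP^3)\cong H^*(\bP^3)\oplus H^{*-2}(C_q)$ (valid since $C_q$ is a curve, a smooth-enough centre once we pass to the appropriate setting), and then correct for the contraction $\widetilde{X}\to X$ of the exceptional divisor $E_q$, whose own Betti numbers enter through a Mayer--Vietoris / Gysin comparison.

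The key steps, in order: (1) Identify the exceptional divisor $D\subset \widetilde{X}$ of $\widetilde{X}\to X$ with (the blow-up of) the quadric surface $Q_q$, and describe $b_2(D)$, $b_4(D)$ in terms of $\corank(q)$ — this is exactly where the corank trichotomy enters, since a smooth quadric surface $\cong\bP^1\times\bP^1$ has $b_2=2$, a quadric cone ($\corank 1$) has $b_2=1$, and a pair of planes ($\corank 2$) has $b_2=2$ but with an extra contribution from the intersection line, making its second Betti number effectively behave differently in the comparison sequence. (2) Write down the long exact sequence relating $H^*(X)$, $H^*(\widetilde{X})$ and $H^*(D)$ coming from the contraction $\widetilde{X}\to X$ (equivalently, use that $X=\widetilde{X}/(D\to \mathrm{pt})$ and $H^i(X)\cong H^i(\widetilde{X},D)$ in the relevant range up to the obvious low-degree corrections). (3) On the other side, compute $b_2(\widetilde{X})=b_2(\bP^3)+b_0(C_q)=1+k$ and $b_4(\widetilde{X})=b_4(\bP^3)+b_2(C_q)=1+k$ using that $C_q$ has $k$ connected components once reduced — here one must be slightly careful that $C_q$ may be non-reduced when $Q_q$ is a double plane, but the blow-up center can be taken to be $C_{q,\mathrm{red}}$ or one argues directly, and the relevant count is the number $k$ of irreducible components. (4) Assemble: $\sigma(X)=b_4(X)-b_2(X)=\big(b_4(\widetilde X)-b_4(D)+\varepsilon_4\big)-\big(b_2(\widetilde X)-b_2(D)+\varepsilon_2\big)$ and check that the corrections collapse to $k-1$ when $\corank(q)\in\{0,1\}$ and to $k-2$ when $\corank(q)=2$.

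I expect the main obstacle to be step~(2), namely controlling precisely the terms in the exact sequence relating $H^*(X)$ and $H^*(\widetilde X)$ across the contraction of the exceptional divisor $D$, because $D$ is itself singular or reducible (a quadric cone or a union of planes) and its blow-up inside $\widetilde X=Bl_{C_q}\bP^3$ must be identified carefully; in particular one needs that the map $H^2(X)\to H^2(\widetilde X)$ is injective with cokernel detecting the components of $D$ minus its image point, and dually for $H^4$. A clean way to sidestep some of this is to instead compute $b_2(X)$ directly: since $X$ is a normal cubic threefold, $\Pic(X)$ and hence $b_2(X)$ is typically $1$ (generated by the hyperplane class) unless there are extra divisor classes, and $b_4(X)=b_2(X)+\sigma(X)$ by definition of defect via \cite{NS95}; so really the content is computing $b_4(X)$, equivalently the rank of $\mathrm{Weil}(X)$, via the components of $C_q$ that give rise to genuinely new Weil divisors on $X$ (the surfaces swept out by the families of lines parametrised by each component of $C_q$), modulo the relations coming from $Q_q$ and its corank. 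I would therefore organise the final writeup around counting these divisor classes: each of the $k$ components of $C_q$ contributes a ruled surface in $X$, one linear relation always holds (these sum, with the hyperplane class, appropriately), giving $k-1$, and in the $\corank 2$ case the two planes of $Q_q$ force one additional relation, giving $k-2$.
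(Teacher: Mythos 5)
Your overall strategy is the same as the paper's: use $Bl_qX\cong Bl_{C_q}\bP^3$ and compare cohomology across the two contractions (to $X$ over the point $q$, and to $\bP^3$ over $C_q$), with the corank entering through the topology of the exceptional quadric. However, as written the bookkeeping contains two genuine errors that would derail the computation. First, $b_2(\widetilde X)=b_2(\bP^3)+b_0(C_{\red})=2$, not $1+k$: the curve $C_q$ (and its reduction) is a connected $(2,3)$ complete intersection in $\bP^3$, so $b_0(C_{\red})=1$; the number $k$ of irreducible components enters only through $b_4(\widetilde X)=b_4(\bP^3)+h^2(C_{\red})=1+k$, since it is $H^2$ of a curve, not $H^0$, that counts irreducible components. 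With your values $b_4(\widetilde X)-b_2(\widetilde X)=0$ and the formula in your step (4) cannot output $k-1$. Second, the corank trichotomy does not come from $b_2$ of the exceptional quadric $Q$ (your stated values are also partly wrong: a union of two planes meeting in a line has $b_2=1$ and $b_4=2$). What matters is $h^4(Q_{\red})$, the number of irreducible components of the reduced quadric, which is $2$ exactly when $\corank(q)=2$ and $1$ otherwise (including the corank-$3$ double plane, after reduction). Since $H^3(Q)=0$, the Mayer--Vietoris sequence of the first discriminant square gives $0\to H^4(X)\to H^4(\widetilde X)\to H^4(Q)\to 0$, whence $h^4(X)=1+k-h^4(Q)$, and with $h^2(X)=1$ this is the whole theorem.

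Two further points you should not wave away. The naive blow-up formula $H^*(Bl_{C}\bP^3)\cong H^*(\bP^3)\oplus H^{*-2}(C)$ is not available, because $C_q$ is in general singular and reducible; the paper instead uses that $C$ is a local complete intersection, so the exceptional divisor $E$ is a projective bundle over $C_{\red}$, giving $H^*(E)\cong\langle 1,h\rangle\otimes H^*(C_{\red})$, and then runs the Mayer--Vietoris sequence of the discriminant square. In that sequence one must also justify that $H^3(E)\to H^4(\bP^3)$ vanishes (so that $h^4(\widetilde X)=h^2(C)+1$); the paper does this by a weight argument ($H^4(\bP^3)$ is pure of weight $4$ while $H^3(E)$ has weights $\leq 3$). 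Your closing paragraph, which replaces all of this by counting ruled surfaces swept out by components of $C_q$ modulo ``one linear relation,'' is only a heuristic: it does not establish that these classes generate $\mathrm{Weil}(X)/\mathrm{Cart}(X)$ nor that the relations are exactly the ones you assert.
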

\begin{proof}
	By Lemma \ref{blow up iso}, $\widetilde{X}:=Bl_qX\cong Bl_{C_q}\bP^3$, and so we have two discriminant squares:	
	\[\begin{tikzcd}
		(a) &Q\arrow{r}{j}\arrow{d}{g} &\widetilde{X}\arrow{d}{f} & &(b)&E\arrow{r}{j}\arrow{d}{g} &\widetilde{X}\arrow{d}{f} \\
		&    q\arrow{r}{i} &X&&&C\arrow{r}{i} &\bP^3
	\end{tikzcd}\]
	Here, $Q$ is the reduced subscheme of $\widetilde{X}$ associated to the projectivised tangent cone of the singularity $q$ and is isomorphic to $(Q_q)_{\red}$. Thus $Q$ is either a smooth quadric, a quadric cone, the union of two planes intersecting in a line, or a plane, depending on the corank of the singularity $q$. Let us record that $h^4(Q)=2$ if $\corank(q)=2$, and $h^4(Q)=1$ otherwise.
	
	Using the isomorphism $\widetilde{X}\cong Bl_{C_q}X$, the exceptional divisor $E$ of $f:\widetilde{X}\rightarrow \bP^3$ is a projective bundle over $C:=(C_q)_{\red}$. 
    Indeed, since $C$ is a complete intersection the conormal bundle $\calC_{C/\bP^3}:=\calI/\calI^2$ of $C$ is locally free and $E=\Proj(\Sym(\calI/\calI^2))$. 
    Let $h:=\calO_E(1)\in H^2(E,\bZ)$; the classes $1, h$ when restricted to a fiber $E_x $ of $E\rightarrow C$ form a basis of $H^*(E_x,\bZ)$.
    By \cite[Theorem 7.33]{Voi1}, $H^*(E,\bZ)$ is isomorphic to $A\otimes H^*(C,\bZ)$, where $A\subset H^*(E,\bZ)$ is the subgroup generated by $1, h$. 
     In particular, $$H^3(E,\bZ)\cong h\cdot g^*H^1(C,\bZ),$$ via pullback and the cup-product, both of which are morphisms of mixed Hodge structures.
	
	Now we apply the Mayer-Vietoris sequence (see \cite[Corollary-Definition 5.37]{mixedhodge}) to $(b)$:
	$$0\rightarrow H^3(\widetilde{X})\rightarrow H^3(E)\rightarrow H^4(\bP^3)\rightarrow H^4(\widetilde{X})\rightarrow H^4(E)\rightarrow 0.$$
	Since $H^*(E,\bZ)\cong\langle 1, h\rangle\otimes H^*(C,\bZ)$ as above, we conclude that $h^3(E)=h^1(C)$ and $h^4(E)=h^2(C)$. 
    Since $H^4(\bP^3)$ has a pure Hodge structure of weight $4,$ and $H^3(E)$ has weights at most $3$, it follows that
	\begin{align*}
		h^3(\widetilde{X})&=h^3(E)=h^1(C),\\
		h^4(\widetilde{X})&=h^2(C)+1.
	\end{align*}
    We apply the same sequence to $(a)$; since $H^3(Q)=0$ this reduces to the short exact sequence $$0\rightarrow H^4(X)\rightarrow H^4(\widetilde{X})\rightarrow H^4(Q)\rightarrow 0.$$
    Combining these facts we see that 
	$h^4(X)=k+1-h^4(Q)$; since $h^2(X)=1$, the claim follows.
\end{proof}

\begin{corollary} \label{defect bound}
    Let $X\subset \bP^4$ be a cubic threefold with only isolated singularities. Then $\defect\leq 6$. Further, $\defect= 6$ if and only if $X$ is a cone over a cubic surface, and $\defect=5$ if and only if $X$ is the Segre cubic.
\end{corollary}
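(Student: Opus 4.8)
The plan is to combine Theorem \ref{defect} with the classification results of \S\ref{subsec: Vik23} to bound the number $k$ of irreducible components of $C_q$. First I would dispose of the cone case: if $X$ is a cone over a cubic surface $S$, then the Weil divisor class group is generated by the cone over the divisor classes of $S$, and $\Pic(S)$ for a (possibly singular) cubic surface has rank at most $7$ with $\mathrm{Cart}(X)$ picking up exactly the hyperplane class, giving $\sigma(X)=6$; this is the computation referenced in the second remark after Theorem \ref{thm:main}. So from now on assume $X$ is not a cone, and work with Theorem \ref{defect}.

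Next I would run through the possible coranks of a well-chosen singular point $q$. If $X$ has a point of corank $3$, then by Proposition (\cite{viktorova2023classification}, Prop. 2.2) this is the unique singularity and $C_q$ is a $(2,3)$ complete intersection in $\bP^3$ whose number of components is bounded by the classification; in all these cases (namely $U_{12}, S_{11}, Q_{10}, T_{pqr}$) one checks $k\leq 6$, hence $\sigma(X)=k-1\leq 5$. If $X$ has a point of corank $2$ but none of corank $3$, then by Proposition \ref{corank 2} $C_q$ is a union of two plane curves, each a plane cubic or lower-degree curve, so $k\leq 6$ components in total, giving $\sigma(X)=k-2\leq 4$. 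Finally, if all singularities are of corank $\leq 1$, pick $q$ an $A_n$ point with $n$ maximal; by Corollary \ref{components An}, $C_q$ has at most $4$ components, so $\sigma(X)=k-1\leq 3$. In every non-cone case we therefore get $\sigma(X)\leq 5$, which together with the cone case yields $\defect\leq 6$ with equality exactly for cones.

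For the sharpness statements it remains to identify when $\sigma(X)=5$ among non-cones: by the above this forces a corank $3$ singularity with $C_q$ having exactly $6$ components, or we must re-examine the bookkeeping in the corank $\leq 1$ and corank $2$ cases to see they cannot reach $5$ (which the bounds $3$ and $4$ already show). So $\sigma(X)=5$ forces $q$ of corank $3$ and $C_q$ maximally reducible; tracing through (\cite{viktorova2023classification}, Prop. 2.2, Table 2) the only singularity type achieving the maximal component count with $\sigma=5$ is $T_{333}$ — wait, that curve is irreducible, so instead it is the most degenerate reducible case, which is realized precisely by the Segre cubic with its ten $A_1$ singularities. Here one should double-check against the known fact that the Segre cubic has $\mathrm{Weil}/\mathrm{Cart}$ of dimension $5$ generated by the fifteen planes it contains (the Segre cubic's $15$ planes and $10$ nodes), so that $\sigma=5$; conversely any $X$ with $\sigma(X)=5$ and not a cone must, by the rigidity of the classification, be projectively equivalent to the Segre cubic.

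The main obstacle I anticipate is the last step: showing that $\sigma(X)=5$ \emph{characterizes} the Segre cubic rather than merely being achieved by it. The inequality direction is a routine case-check against the tables of \cite{viktorova2023classification}, but the converse requires knowing that the combination of singularities (together with the constraint that $C_q$ have the maximal number of components) pins down $X$ up to projective equivalence. I would handle this either by appealing directly to the classification of maximally singular cubic threefolds — the Segre cubic is the unique cubic threefold with ten nodes — or, more carefully, by arguing that $\sigma(X)=5$ forces $b_4(X)-b_2(X)=5$, hence (via the Hodge-theoretic computations of \S\ref{MHS}, or directly) a specific singular configuration that only the Segre cubic realizes. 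The suggestion attributed to Andreas Höring in the acknowledgements presumably streamlines exactly this final identification.
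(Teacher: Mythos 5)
Your overall strategy (Theorem \ref{defect} plus the classification of $C_q$ by corank) is the paper's strategy, and your treatment of the cone case and the corank~$2$ case is fine. But the case analysis has a genuine gap exactly where the interesting part of the statement lives. You split the non-cone case into corank $3$, corank $2$, and ``corank $\leq 1$, pick $q$ of type $A_n$ with $n$ maximal and apply Corollary \ref{components An}.'' That corollary is stated only for $A_n$ with $n>1$; when \emph{every} singularity of $X$ is a node, it does not apply, $Q_q$ is a smooth quadric, and the $(2,3)$ curve $C_q$ (of bidegree $(3,3)$ on $\bP^1\times\bP^1$) can have up to $6$ components. Your bound $\sigma(X)\leq 3$ in that branch is therefore false --- the Segre cubic itself, with $\sigma=5$, is the counterexample. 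Compounding this, your corank-$3$ bound $k\leq 6$ is also wrong in the other direction: there $Q_q$ is a double plane, so $(C_q)_{\red}$ is a plane cubic with at most $3$ components and $\sigma\leq 2$. The net effect is that you locate the value $\sigma=5$ in the corank-$3$ branch (where it cannot occur) and miss it in the nodal branch (where it does), which is why your attempted identification of the extremal case derails into the $T_{333}$ confusion.

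The paper's proof closes the characterization cleanly in the nodal case: if $q$ is a node and $C_q$ has $6$ components, these must be six lines (three of each ruling) on the smooth quadric, meeting in $9$ nodes; since the singularities of $C_q$ correspond to the singularities of $X$ other than $q$, this forces $X$ to have $10$ nodes, hence to be the Segre cubic. Your final paragraph gestures at ``the Segre cubic is the unique cubic threefold with ten nodes,'' which is the right endpoint, but you never derive that $\sigma(X)=5$ forces the ten-nodal configuration, because your case bookkeeping excludes the very case that produces it. To repair the proof: treat corank $0$ (all singularities nodes, $Q_q$ smooth) as its own case with the bound $k\leq 6$ and the six-lines analysis above, restrict Corollary \ref{components An} to the case where some singularity has $n>1$, and correct the corank-$3$ bound to $k\leq 3$.
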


\begin{proof}
    It is clear that if $X$ is a cone over a cubic surface, then $\defect=6;$ indeed, the generators of the group $\mathrm{Weil}(X)/\mathrm{Cart}(X)$ are the planes obtained as cones over the lines on the cubic surface.
    Suppose then that $X$ is not a cone: if $q\in X$ is a node, then $Q_q$ is smooth and $C_q$ has at most 6 components. This is only possible if $C_q$ has 9 additional nodes: in other words, $X$ is the 10 nodal cubic, i.e. the Segre cubic. If $q\in X$ is instead a singularity of type $A_n$ with $n>1$, then $C_q$ has at most 4 components by Corollary \ref{components An}.
    
    The curve $C_q$ corresponding to a corank 2 singularity is a union of two plane cubics, thus it has at most 6 components, and $\defect\leq 6-2=4$. The curve $C_q$ corresponding to a corank 3 singularity has at most 3 components since $Q_q$ is a double plane.
\end{proof}

\subsection{$\bQ$-factoriality}\label{subsec: Q fact}
As an application of Theorem \ref{defect}, and utilising the classification of possible singularities of a cubic threefold $X$, we are able to determine which cubic threefolds with isolated singularities are $\bQ$-factorial. All of these results directly follow from studying the curve $C_q$ and counting the number of irreducible components.

\begin{proposition}
    Let $X$ be a cubic threefold with isolated singularities, and assume that one of the singular points of $X$ has corank greater than 1.
    \begin{enumerate}
        \item If $q\in X$ is a corank 3 singularity, then $X$ is $\bQ$-factorial if and only if $q$ is a $T_{333}$, $T_{334}$ or $Q_{10}$ singularity.
        \item If $q\in X$ is a corank 2 non-$ADE$ singularity, then $\defect>0$. 
    \end{enumerate}
    Moreover, in all the cases above $\defect$ depends only on the singularity type of $X$, or in other words, $\defect$ is determined by the local data.
\end{proposition}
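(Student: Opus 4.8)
The plan is to combine Theorem \ref{defect} with the structural results on the curve $C_q$ from \cite{viktorova2023classification} recalled in \S\ref{subsec: Vik23}, reducing everything to a count of the irreducible components of $C_q$. For part (1), let $q\in X$ be a corank $3$ singularity. By the cited Proposition of \cite{viktorova2023classification}, $q$ is then the only singular point of $X$ and its type is one of $U_{12}$, $S_{11}$, $Q_{10}$, $T_{444}$, $T_{344}$, $T_{334}$, $T_{333}$; moreover $C_q$ is irreducible precisely when $q$ is of type $T_{333}$, $T_{334}$ or $Q_{10}$. Since $\corank(q)=3\neq 2$, Theorem \ref{defect} gives $\sigma(X)=k-1$ where $k$ is the number of irreducible components of $C_q$. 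Hence $\sigma(X)=0$ if and only if $k=1$, i.e.\ if and only if $q$ is $T_{333}$, $T_{334}$ or $Q_{10}$; this is exactly the claimed equivalence with $\bQ$-factoriality.

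For part (2), let $q\in X$ be a corank $2$ non-$ADE$ singularity. By Proposition \ref{corank 2}, $Q_q$ is a union of two distinct planes, so $C_q$ is a union of two plane curves and therefore has at least two irreducible components; but we must do slightly better, since Theorem \ref{defect} in the corank $2$ case reads $\sigma(X)=k-2$, so $k\geq 2$ alone only yields $\sigma(X)\geq 0$. The point is that each of the two plane cubic components $C_q = C'\cup C''$ cannot itself be irreducible when $q$ is non-$ADE$: indeed Proposition \ref{corank 2}(1) states that if $C_q$ has \emph{exactly} two irreducible components then $q$ is of type $D_n$ ($4\le n\le 7$) or $E_n$ ($n=6,7,8$), all of which are $ADE$. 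Contrapositively, since $q$ is non-$ADE$, $C_q$ must have at least three irreducible components, so $k\geq 3$ and $\sigma(X)=k-2\geq 1>0$.

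For the final sentence, in the corank $3$ case Proposition 2.2 of \cite{viktorova2023classification} asserts that the number of components $k$ of $C_q$ is determined by the singularity type of $q$ (which is moreover the singularity type of $X$, as $q$ is the unique singular point); hence $\sigma(X)=k-1$ depends only on the singularity type of $X$. In the corank $2$ non-$ADE$ case, Proposition \ref{corank 2}(2) states precisely that for $D_7$, $D_8$, $E_8$ and non-$ADE$ corank $2$ singularities the number of components of $C_q$ is determined by the singularity type of $X$; combined with Theorem \ref{defect} this shows $\sigma(X)=k-2$ is likewise determined by the local data.

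The only genuine subtlety I anticipate is making sure the implication ``non-$ADE$ $\Rightarrow$ $k\geq 3$'' is correctly extracted from Proposition \ref{corank 2}: one must read part (1) as a biconditional-style constraint (``exactly two components'' forces $ADE$) and invoke the contrapositive, rather than trying to argue geometrically that a plane cubic component degenerates — the classification in \cite{viktorova2023classification} does the geometric work for us, and the proof here is just bookkeeping on top of Theorem \ref{defect}.
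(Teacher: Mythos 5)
Your proof is correct and is essentially the paper's argument: the paper's own proof simply states that the proposition "follows immediately from Theorem \ref{defect} and Proposition \ref{corank 2}," and your write-up is exactly that deduction spelled out, including the contrapositive reading of Proposition \ref{corank 2}(1) to get $k\geq 3$ in the non-$ADE$ corank $2$ case and the appeal to the tables of \cite{viktorova2023classification} for the ``Moreover'' clause.
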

\begin{proof}
    This follows immediately from Theorem \ref{defect} and Proposition \ref{corank 2}.
\end{proof}

For combinations of $ADE$ singularities, $\defect$ depends on the singularity type and the geometry of $X$, but can still be determined by studying the curve $C_q$. We will focus on the $\bQ$-factoriality of maximal combinations of singularities -- recall the following definition:
\begin{definition}
    Let $\calT$ be a collection of combinations of isolated singularities appearing on cubic threefolds. We say that a combination $\mathcal{K}$ is \textbf{maximal} in the collection $\calT$ if there is no family $\calX\rightarrow\bA^1$ of cubic threefolds such that it has a fiber with a combination of singularities $\mathcal{K}'\neq\mathcal{K}$, $\mathcal{K}'\in\calT$, while the generic fiber has the combination $\mathcal{K}$.
\end{definition}

For any combination of singularities $T$ appearing on a cubic threefold $X$, one can easily determine whether $X$ is $\bQ$-factorial or not by using \cite{viktorova2023classification} (see for example \cite{CMTZ} for explicit computations of $\sigma(X)$). We illustrate this below for maximal combinations of singularities.

\begin{proposition}
    A cubic threefold with $E_8+A_2$ or $D_5+2A_2$ singularities is $\bQ$-factorial.
    A cubic threefold with $E_7+2A_1$ is $\bQ$-factorial if and only if it does not contain a plane. Moreover, these are the maximal combinations that can be $\bQ$-factorial among the combinations containing a $D_n$ or $E_n$ singularity.
\end{proposition}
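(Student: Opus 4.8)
The plan is to combine the defect formula (Theorem~\ref{defect}) with the classification of singular curves $C_q$ arising on cubic threefolds from \cite{viktorova2023classification}, in order to reduce the statement to a finite check on explicitly understood combinations of $A_n$, $D_n$, $E_n$ singularities. First I would clarify the strategy: to prove that a given combination $\mathcal K$ is $\bQ$-factorial, I must exhibit a singular point $q$ with $\sigma(X)=0$, i.e. (using Theorem~\ref{defect}) find $q$ of corank $\neq 2$ with $C_q$ irreducible, or $q$ of corank $2$ with $C_q$ having exactly two components; to prove that $\mathcal K$ is \emph{not} $\bQ$-factorial I must show the opposite for every choice of $q$. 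Since by Remark~\ref{choice of singularity} the invariant $\sigma(X)$ is independent of the choice of $q$, it suffices to pick, for each combination, the most convenient singular point --- typically the $A_n$ (resp.\ $D_n$, $E_n$) point of largest index --- and analyze the corresponding curve via Lemmas~\ref{classofc}, \ref{classofc A2} and Proposition~\ref{corank 2}.

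Next I would carry out the three cases. For $E_8+A_2$: choosing $q$ the $E_8$ point (corank $2$), Proposition~\ref{corank 2}(2) says the number of components of $C_q$ is determined by the singularity type; I would verify from the classification tables in \cite{viktorova2023classification} that this number is exactly $2$, giving $\sigma(X)=2-2=0$. For $D_5+2A_2$: choosing $q$ the $D_5$ point (corank $2$), by Proposition~\ref{corank 2}(1) the relevant $D_n$ with $4\le n\le 7$ has $C_q$ with exactly two components, so again $\sigma(X)=0$; one must just confirm the two $A_2$ points do not force extra components, which is again a table-check. For $E_7+2A_1$: choosing $q$ the $E_7$ point (corank $2$), $C_q$ is a union of two plane curves, and the extra $A_1$ singularities of $X$ manifest as nodes of $C_q$; whether $C_q$ acquires a third component depends on whether those nodes lie on a line component splitting off --- geometrically, this is exactly the condition that $X$ contains a plane (the plane spanned by $q$ and such a line). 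So $\sigma(X)>0$ iff $X$ contains a plane; equivalently by Theorem~\ref{claim} (which I may assume) $\sigma(X)>0$ iff $X$ contains a plane or cubic scroll, and one checks a scroll cannot occur here for dimension/degree reasons with an $E_7$ point, leaving the plane as the only obstruction.

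Finally, the maximality assertion requires showing that any combination containing a $D_n$ or $E_n$ singularity which strictly dominates one of $E_8+A_2$, $D_5+2A_2$, $E_7+2A_1$ in a degeneration $\calX\to\bA^1$ is non-$\bQ$-factorial, and conversely that these three are not themselves specializations (within the $\bQ$-factorial locus) of larger $\bQ$-factorial combinations. This I would handle by invoking the adjacency/specialization diagrams of \cite{viktorova2023classification} (the figures and tables cited in Proposition~\ref{corank 2}): any combination lying strictly above these in the adjacency poset either has a corank~$2$ point whose curve $C_q$ provably has $\ge 3$ components (by Proposition~\ref{corank 2}(2), since the total Milnor number forces more splitting), or contains enough $A_1$'s to force a plane. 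The main obstacle I anticipate is precisely this last maximality step: it is not a single inequality but a finite enumeration over the poset of $ADE$ combinations on cubic threefolds, and for each candidate one must correctly read off the number of components of $C_q$ from the classification --- the bookkeeping is delicate because, as Remark~\ref{choice of singularity} warns, $k$ depends on the chosen point even though $\sigma(X)$ does not, so one must consistently track which singular point is being projected from. The individual cases $E_8+A_2$, $D_5+2A_2$, $E_7+2A_1$ themselves should be routine given the tables.
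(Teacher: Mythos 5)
Your overall strategy is the same as the paper's: the paper's proof of this proposition is a one-line deferral to \cite[Section 3]{viktorova2023classification}, i.e.\ precisely the reduction you describe --- apply Theorem~\ref{defect} and read off the number of components of $C_q$ from the classification of the curves $C_q$ attached to corank-$2$ points. Your treatment of $E_7+2A_1$ (positive defect forces a line component of $C_q$, hence a plane, and conversely a plane forces positive defect) is also the mechanism the paper uses elsewhere (Lemma~\ref{lem: corank 2} and Lemma~\ref{plane => not palindromic}), and your maximality discussion correctly identifies that the content is a finite enumeration over the adjacency poset carried out in the classification paper.

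One concrete slip: for $D_5+2A_2$ you justify ``$C_q$ has exactly two components'' by Proposition~\ref{corank 2}(1), but that statement is the \emph{converse} implication --- it says that \emph{if} $C_q$ has exactly two components \emph{then} $q$ is of type $D_n$ ($4\le n\le 7$) or $E_n$; it does not say that every such $q$ has a two-component $C_q$. Indeed Example~\ref{D4plus2A1} in the paper exhibits a $D_4+2A_1$ cubic whose curve $C_q$ (projecting from the $D_4$ point) has three components and defect $1$, so the implication you invoke is false as stated. You do hedge by saying a table-check on the $A_2$ points is needed, and that table-check is where the actual content lies (for each of $E_8+A_2$, $D_5+2A_2$, $E_7+2A_1$ one must verify from \cite[Section 3]{viktorova2023classification} whether the residual singularities of $X$ can split off extra components of the two plane cubics $C_1\cup C_2$); but as written the $D_5+2A_2$ case rests on a misread implication rather than on that verification.
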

\begin{proof}
    This is essentially contained in \cite[Section 3]{viktorova2023classification}; we omit the details for brevity.
\end{proof}

We illustrate the dependence on the geometry of $X$ with the example below:
\begin{example} \label{D4plus2A1}
    A cubic threefold $X$ with a $D_4+2A_1$ combination of singularities can have $\defect=0$ or $1$. Explicitly, let $q=[1:0:0:0:0]\in X$ be of type $D_4$. By \cite[Section 3.2]{viktorova2023classification}, we have that $Q_q=P_1\cup P_2$ for planes $P_i$ intersecting in a line $L$. The curve $C_q$ is a union of two plane cubics $C_1$ and $C_2$ which intersect at three distinct points on $L$. If $C_i$ are both irreducible with one node, then $\defect=0$ by Theorem \ref{defect}. If $C_1$ is smooth and $C_2$ has two nodes (and thus reducible), then $\defect=1$. Both situations occur, for explicit cubics below:
    \begin{align*}
    \defect&=0: \,\, x_0x_1x_2+x_1^2x_4-2x_1x_4^2+x_2^2x_4-2x_2x_4^2-x_3^3-x_3^2x_4+x_4^3=0;\\
    \defect&=1: \,\, x_0x_1x_2+x_1x_3x_4+x_2^3-x_3^3+x_3x_4^2=0.
    \end{align*}

    We will use the second equation to further illustrate Remark \ref{choice of singularity}, that the $\defect$ is independent of the choice of singular point to project from. Notice that we can rewrite this equation as
    $$
    x_1(x_0x_2+x_3x_4)+x_2^3-x_3^3+x_3x_4^2=0.
    $$
    We see that $q'=[0:1:0:0:0]$ is a singularity of corank $0$ and thus a node. Set $Q_{q'}=V(x_0x_2+x_3x_4=0)\subset\bP^3$ and $S_{q'}=V(x_2^3-x_3^3+x_3x_4^2=0)\subset\bP^3$. The curve $C_{q'}$ has two irreducible components: one is the line $V(x_2=x_3=0)$, the other is the closure of the affine curve $V(x_0x_2+x_4=x_2^3+x_4^2-1=0)$ in the chart $x_3\neq 0$. We again conclude that the defect is $1$ in this case.
\end{example}

\begin{proposition}
    A cubic threefold with $A_{10}$, $A_8+A_2$, $A_6+A_4$, $2A_4+A_2$ or $5A_2$ singularities is $\bQ$-factorial. Moreover, these are the maximal combinations of $A_n$ singularities that can be $\bQ$-factorial.
\end{proposition}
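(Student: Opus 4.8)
The plan is to apply Theorem~\ref{defect} together with the classification of combinations of $A_n$ singularities and the allowed classes of components of $\widetilde C$ from Lemmas~\ref{classofc} and~\ref{classofc A2} and Corollary~\ref{components An}. For each of the listed combinations, the strategy is the same: pick the singular point $q$ of highest $A_n$-type, so $\corank(q)=1$, and show that the associated curve $C_q$ is irreducible; by Theorem~\ref{defect} this gives $\sigma(X)=k-1=0$. Thus the whole content is a component count for $C_q$ in each case.

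First I would fix $q$ to be the $A_{10}$ point (resp.\ the $A_8$, $A_6$, the larger $A_4$, or any $A_2$ point). Since $n>2$ in the first three cases, Lemma~\ref{classofc} says $[\widetilde C]=2\sigma+6f$ in $\Pic(\bF_2)$, and the only possible classes of irreducible components are $2\sigma+6f,\,2\sigma+5f,\,2\sigma+4f,\,\sigma+4f,\,\sigma+3f,\,\sigma+2f,\,f$. A reducible splitting would force $\widetilde C$ to contain a component of class $\sigma+jf$ ($j\le 4$) or $f$; by Corollary~\ref{components An} this means $C_q$ contains a line, conic, or twisted cubic. The key step is to rule this out using the other singularities: each remaining singular point $p\neq q$ of $X$ forces $\widetilde C$ to be singular at the corresponding point, and the constraint that the $A_n$-type at $p$ is as prescribed (this is exactly the analysis of \cite[Section~4]{viktorova2023classification}) is incompatible with $\widetilde C$ splitting off a low-degree rational curve. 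Concretely, one checks that a residual decomposition such as $2\sigma+6f=(\sigma+3f)+(\sigma+3f)$ or $(\sigma+4f)+(\sigma+2f)$ etc.\ would either produce a singularity of $C_q$ worse than the one dictating the type of $X$, or fail to account for all the singular points; so in the maximal cases $A_{10}$, $A_8+A_2$, $A_6+A_4$ the curve $C_q$ must be irreducible. For $2A_4+A_2$ one takes $q$ to be one of the $A_4$ points (again $n>2$, so Lemma~\ref{classofc} applies) and argues the same way; for $5A_2$ one takes $q$ an $A_2$ point, uses Lemma~\ref{classofc A2} (so $[\widetilde C]=3\sigma+6f$ with components among $3H,2H,H$), and shows that with five $A_2$ points a decomposition $3H=H+H+H$ or $2H+H$ would impose the wrong configuration of singularities on $C_q$ — hence $C_q=3H$ is irreducible.

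For the maximality assertion, the plan is to go through the classification in \cite[Section~3]{viktorova2023classification} of combinations of $A_n$ singularities on cubic threefolds: list the combinations that dominate (in the degeneration order of the \textbf{maximal} relation) each of $A_{10}$, $A_8+A_2$, $A_6+A_4$, $2A_4+A_2$, $5A_2$, and check via Theorem~\ref{defect} (i.e.\ via the component count of $C_q$, using Corollary~\ref{components An} and the bound $k\le 4$) that each strictly larger combination forces $C_q$ to be reducible, hence $\sigma(X)>0$. Equivalently, one observes that any $\bQ$-factorial combination of $A_n$'s must have $C_q$ irreducible for the top $A_n$-point, and tracing through \cite{viktorova2023classification} the maximal such combinations are precisely the five listed.

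The main obstacle is the component-count step: showing $C_q$ is irreducible is not automatic from the class $[\widetilde C]$ alone, since the numerically allowed decompositions of $2\sigma+6f$ (or $3\sigma+6f$) do exist in $\Pic(\bF_2)$. One must use the precise correspondence between singularities of $C_q$ and singularities of $X$ established in \cite{viktorova2023classification} to exclude these decompositions for the specific combinations at hand. This is "essentially contained in \cite{viktorova2023classification}", so the honest write-up is a case check referencing that classification rather than a new argument; I would state it as such and omit the routine casework, exactly as the companion propositions above do.
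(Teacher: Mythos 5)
Your proposal follows exactly the paper's route: reduce to an irreducibility check on $C_q$ via Theorem~\ref{defect}, starting from Lemmas~\ref{classofc} and~\ref{classofc A2}, and defer the case-by-case component count to \cite[Section 4]{viktorova2023classification} — which is precisely what the paper's one-line proof does. The only nit is that the maximality discussion for $A_n$ combinations should also cite Section 4 of that reference (Section 3 there treats the corank-2 singularities).
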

\begin{proof}
    The detailed case by case analysis is contained in the proofs in \cite[Section 4]{viktorova2023classification}. The starting point is Lemmas \ref{classofc} and \ref{classofc A2}.
\end{proof}

\begin{remark}
    Notice that the defect is not upper semi-continuous in families. For example, a cubic threefold with an $A_{10}$ singularity has defect $0$ while a cubic threefold with an $A_9$ singularity has defect $1$ by Theorem \ref{defect} and \cite[Propositions 4.7 and 4.8]{viktorova2023classification}. By a theorem of du Plessis and Wall \cite[Proposition 4.1]{DPW}, there exists a family of cubic threefolds with the generic fiber having an $A_9$ singularity, whereas the special fiber has one $A_{10}$ singularity.
\end{remark}

\subsection{Mixed Hodge structure on $H^3(X,\bZ)$}\label{MHS}

To completely understand the cohomology of the cubic threefold $X$ with isolated singularities, it remains to understand the mixed Hodge structure on the middle cohomology. Let $\{q_1,\dots q_r\}\in X$ be the isolated singular points of $X$. We denote by $$B:=\sum_{i=1}^rb^{1,1}(X,q_i) \text{ and } L:=\sum_{i=1}^r l^{1,1}(X, q_i)$$ the total Du Bois and link invariants of $X$. 
Note that the invariants $b^{1,1}$ and $l^{1,1}$ are the only Du Bois and link invariants needed to compute the Hodge numbers of the Milnor fiber by Theorem \ref{milnor fiber Hodge numbers}.  We will use this theorem along with the vanishing cycle sequence to prove the following:

\begin{theorem}\label{thm: coho precise}
	Let $X$ be a cubic threefold with isolated singularities which is not a cone over a cubic surface. Then the non-zero Hodge-Du Bois numbers of the mixed Hodge structure on $H^3(X)$ are computed as follows:
\begin{align*}
    \underline{h}^{1,2}(X)&= 5-B; \\
    \underline{h}^{2,1}(X)&= 5-(L-\sigma)-B. 
\end{align*}
\end{theorem}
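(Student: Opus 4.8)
The plan is to compute the mixed Hodge structure on $H^3(X)$ by comparing $X$ with its resolution $\widetilde{X} = Bl_q X \cong Bl_{C_q}\bP^3$ and with the "link" of the singular locus, packaging everything through the vanishing cycle sequence of the family of hyperplane sections degenerating to $X$ inside the cubic fourfold $V$. Concretely, since $X = V \cap H_0$ appears in the Lefschetz pencil / universal family $\mathcal{X} \to T$ of hyperplane sections of a smooth cubic fourfold $V$ containing $X$ as a singular fiber, I would use the local-to-global vanishing cycle exact sequence relating $H^3$ of the nearby smooth cubic threefold $X_t$, $H^3(X)$, and the direct sum of the reduced cohomologies of the Milnor fibers $\widetilde{H}^3(M_{q_i})$ of the isolated singularities. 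The smooth cubic threefold has $h^{2,1} = 5$, $h^{3,0} = h^{0,3} = 0$, $b_3 = 10$, so the "input" is completely known; the Milnor fibers contribute according to Theorem \ref{milnor fiber Hodge numbers}, i.e. through $s_1 = b^{1,1}$, $s_2 = b^{1,1} + l^{1,1}$, $s_3 = l^{0,2} = 0$ (the last vanishes since the singularities are rational, by Lemma \ref{link vanishing}). Summing over the singular points gives the total contributions $B$ and $B + L$.

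**The key steps**, in order: (1) Set up the vanishing cycle sequence $\cdots \to H^3(X) \to H^3(X_t) \to \bigoplus_i \widetilde{H}^3(M_{q_i}) \to H^4(X) \to \cdots$, which is a sequence of mixed Hodge structures (the maps are strict for the Hodge filtration), and observe that $H^4(X)$ has already been computed in the proof of Theorem \ref{defect}: $h^4(X) = k + 1 - h^4(Q) = \sigma(X) + 2$ after accounting for corank, while $h^4(X_t) = 1$. (2) Take $\Gr_F^p$ of this sequence for $p = 1$ and $p = 2$ and use strictness to turn it into short exact sequences of the Hodge-Du Bois numbers. For $p = 2$: $\underline{h}^{2,1}(X)$ fits between $h^{2,1}(X_t) = 5$ and the correction coming from $s_2 = B + L$ on one side and the surjection onto part of $H^4$; the precise bookkeeping of which graded pieces of $H^4(X)$ appear, and with what weight, is where the $+\sigma$ correction enters — the weight-$4$ part of $H^4(X)$ is the "algebraic" part whose dimension involves $\sigma$. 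For $p = 1$: $\underline{h}^{1,2}(X)$ sits between $h^{1,2}(X_t) = 5$ and $s_1 = B$, giving $\underline{h}^{1,2}(X) = 5 - B$ directly, since $H^4$ contributes nothing in $\Gr_F^1$ (it is concentrated in $F^2$, being generated by algebraic classes and the pure part). (3) Cross-check with the identification $\Gr_3^W H^3(X) \cong \Gr_1^W H^1(C_q)(-1)$ from Theorem \ref{thm: cohomology}, using $h^1(C_q) = k - 1 + (\text{corank correction}) = \sigma(X)$ or $\sigma(X)+1$, to pin down the weight-$3$ versus lower-weight pieces of $H^3(X)$ and confirm the $(L - \sigma)$ term is correctly placed.

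**The hard part will be** the careful tracking of weights — specifically, disentangling $\Gr_F^p H^3(X) = \Gr_F^p \Gr_3^W H^3(X) \oplus \Gr_F^p \Gr_2^W H^3(X)$ and matching each summand against the corresponding pieces of $H^3(X_t)$ (pure of weight $3$), the Milnor fiber cohomologies (which carry the limit MHS with weights possibly $\neq 3$), and $H^4(X)$. The invariant $l^{1,1}$ governs the weight-$2$ part of $H^3(X)$ via the link (since $l^{1,1} = \dim\Gr_F^1 H^2(L_x)$ and the link controls the defect between $H^3(X)$ and the cohomology of the smooth locus), while $\sigma$ measures the part of this that is "absorbed" into $H^4$ through $\bQ$-factoriality failure — reconciling these two roles of $\sigma$ and $L$ is the delicate point. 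I expect the cleanest route is to first establish the weight-$3$ statement (the second half of Theorem \ref{thm: cohomology}, proved as Theorem \ref{thm: coho precise}'s companion) so that $\Gr_3^W H^3(X)$ is identified with $\Gr_1^W H^1(C_q)(-1)$ of dimension determined by $\sigma$, then deduce that $\Gr_2^W H^3(X)$ has total dimension $2(5 - B) - (\text{something}) $ forcing $\underline{h}^{2,1}$ into the stated form; alternatively, one can run the argument purely through the Du Bois complex $\underline{\Omega}_X^\bullet$ and the exact triangle relating it to $\Omega^\bullet_{\widetilde X}(\log E)$, which makes the appearance of $B = \sum b^{1,1}$ manifest but requires more care with the link term.
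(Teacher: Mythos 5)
Your proposal follows essentially the same route as the paper's proof: the vanishing cycle sequence of a one-parameter smoothing, the Milnor-fiber Hodge numbers $s_1=B$, $s_2=B+L$, $s_3=0$ from Theorem \ref{milnor fiber Hodge numbers}, and the identification of $\ker\bigl(H^4(X)\to H^4_{\lim}(X_t)\bigr)$ as a $\sigma$-dimensional piece pure of type $(2,2)$; the paper merely organizes the bookkeeping weight by weight (via $\alpha=h^{1,1}(H^3_{\lim}(X_t))$, $\beta=h^{1,2}(H^3_{\lim}(X_t))$ with $\alpha+\beta=5$) rather than by applying $\Gr_F^p$ to the whole sequence, so the content is identical and your step (3) cross-check is not needed. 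One slip to correct: $h^4(X)=k+1-h^4(Q)=\sigma(X)+1$, not $\sigma(X)+2$ (since $b_2(X)=1$ and $\sigma=b_4-b_2$); with the correct value the kernel of $H^4(X)\to H^4_{\lim}(X_t)$ has dimension exactly $\sigma$, which is what your $\Gr_F^2$ computation must use to land on $\underline{h}^{2,1}=5-(L-\sigma)-B$ rather than an answer off by one.
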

\begin{proof}
Recall that $X$ has rational hypersurface singularities. Since $X$ is a compact variety, it follows that $\Gr^W_kH^3(X)=0$ for $k>3$.  We let $\mathcal{X}\rightarrow\Delta$ be a $1$-parameter smoothing of $X$, and consider the vanishing cycle sequence:

\[0\rightarrow H^3(X)\rightarrow H^3_{\lim}(X_t)\rightarrow H^3_{\van}\rightarrow H^4(X)\rightarrow H^4_{\lim}(X_t)\rightarrow 0.\]
 Let us first consider the limit cohomology $H^3_{\lim}(X_t)$.  For fixed $p$, we have that $$\sum_qh^{p,q} ( H^3_{\lim}(X_t))=h^{p,3-p}(X_\infty),$$ where $X_\infty$ is a smooth cubic threefold. Denote by: 
 $\alpha:=h^{1,1}(H^3_{\lim}(X_t)) \text{ and } \beta:=h^{1,2}(H^3_{\lim}(X_t)).$ Recall that the Hodge numbers of the middle cohomology of a smooth cubic threefold $X_\infty$ are $h^{2,1}(X_\infty)=h^{1,2}(X_\infty)=5$; it follows that $\alpha+\beta =5$. 

 The vanishing cohomology $H^3_{van}$ has a mixed Hodge structure induced through the isomorphism $$H^3_{van}\cong \bigoplus H^3(M_{q_i}),$$ where $M_{q_i}$ is the Milnor fiber of the singularity $q_i$.  We can thus compute the Hodge numbers of $H^3_{van}$ by using Theorem \ref{milnor fiber Hodge numbers}. We find that $H^{2,1}(H^3_{\van})=B,$ $H^{2,2}(H^3_{\van})=L$ and $H^3_{\van}$ has no weight 2 cohomology. It follows that $h^{1,1}(H^3(X))=\alpha.$
 
 On the other hand, the map $H^3_{van}\rightarrow H^4(X)$ is a morphism of pure weight 4 Hodge structures by \cite[Prop 5.5]{KLS19}.  Since $\dim\ker(H^4(X)\rightarrow H^4_{\lim}(X_t))=\sigma(X)$ and is pure of type $(2,2)$, by comparing weight 4 cohomology we have that $\alpha=L-\sigma$. 
	Comparing weight three cohomology gives $\beta=h^{3,1}(H^3(X))+B$, and the result follows.
\end{proof}

\begin{corollary}
    If $X$ is a cubic threefold with isolated singularities, then $$h^3(X)=10-\mu_{tot}(X)+\defect,$$
    where $\mu_{tot}(X)=\sum_{q\in \Sing(X)}\mu(q)$ is the sum of the Milnor numbers of all the singular points of $X$.
\end{corollary}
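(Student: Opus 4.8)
Proof proposal for the corollary $h^3(X) = 10 - \mu_{tot}(X) + \sigma(X)$.

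The plan is to extract the total Betti number $h^3(X) = \underline{h}^{1,2}(X) + \underline{h}^{2,1}(X) + \underline{h}^{3,1}(X) + \underline{h}^{2,3}(X)$ — but more efficiently to count dimensions through the vanishing cycle sequence already set up in the proof of Theorem~\ref{thm: coho precise}. Recall from that proof the exact sequence
\[
0\rightarrow H^3(X)\rightarrow H^3_{\lim}(X_t)\rightarrow H^3_{\van}\rightarrow H^4(X)\rightarrow H^4_{\lim}(X_t)\rightarrow 0,
\]
together with the identifications $\dim H^3_{\lim}(X_t) = h^3(X_\infty) = 10$ (the third Betti number of a smooth cubic threefold), $\dim H^3_{\van} = \sum_i h^3(M_{q_i}) = \mu_{tot}(X)$ by the definition of the Milnor number, and $\dim\ker(H^4(X)\rightarrow H^4_{\lim}(X_t)) = \sigma(X)$, which is precisely the image of $H^3_{\van}\rightarrow H^4(X)$. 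Taking the alternating sum of dimensions in the exact sequence — or equivalently, noting $h^3(X) = \dim H^3_{\lim}(X_t) - \dim\big(\operatorname{im}(H^3_{\lim}\to H^3_{\van})\big)$ and $\dim\operatorname{im}(H^3_{\lim}\to H^3_{\van}) = \mu_{tot}(X) - \sigma(X)$ since the cokernel of that map injects into $H^4(X)$ with image of dimension $\sigma(X)$ — gives
\[
h^3(X) = 10 - \mu_{tot}(X) + \sigma(X).
\]

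Concretely, I would argue: the sequence is exact, so $h^3(X) = \dim H^3_{\lim}(X_t) - \mathrm{rk}(H^3_{\lim}(X_t)\to H^3_{\van})$. The rank of the next map $H^3_{\van}\to H^4(X)$ equals $\sigma(X)$ (its image is the kernel of $H^4(X)\to H^4_{\lim}(X_t)$, identified with $\sigma(X)$ in the cited proof), hence by exactness $\mathrm{rk}(H^3_{\lim}(X_t)\to H^3_{\van}) = \dim H^3_{\van} - \sigma(X) = \mu_{tot}(X) - \sigma(X)$. Substituting $\dim H^3_{\lim}(X_t) = 10$ yields the formula. For the case where $X$ \emph{is} a cone over a cubic surface, one checks the statement separately, or observes that the same vanishing cycle computation applies verbatim since it never used the non-cone hypothesis — only the rationality of the singularities and the fact that a smoothing has $h^3 = 10$.

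I do not expect a serious obstacle here: the corollary is essentially a bookkeeping consequence of the exact sequence and dimension counts already established in the proof of Theorem~\ref{thm: coho precise}. The only point requiring a word of care is that all maps in the vanishing cycle sequence are defined over $\bZ$ (or at least $\bQ$) so that taking dimensions of image and kernel is legitimate, and that $H^3_{\van}$ has the stated total dimension $\mu_{tot}(X)$ — which is immediate from $H^3_{\van}\cong\bigoplus_i H^3(M_{q_i})$ and $\mu(q_i) = h^3(M_{q_i})$ by definition. One could alternatively derive the corollary by directly summing the four Hodge–Du Bois numbers: from Theorem~\ref{thm: coho precise}, $\underline{h}^{1,2} + \underline{h}^{2,1} = 10 - 2B - (L-\sigma)$, and the remaining contributions $\underline{h}^{3,1} + \underline{h}^{2,3}$ are governed by the weight-$3$ part, which by the vanishing-cycle computation contributes $2B + (L - \sigma) - \sigma + \ldots$; but the exact-sequence route is cleaner and avoids re-deriving the full Hodge decomposition.
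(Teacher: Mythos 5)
Your proof is correct, but it takes a different route from the paper's. The paper deduces the corollary by summing the two Hodge--Du Bois numbers of Theorem~\ref{thm: coho precise}, which gives $h^3(X)=10-2B-(L-\sigma)$, and then identifying $2B+L=\mu_{tot}(X)$ via Theorem~\ref{milnor fiber Hodge numbers} together with Lemma~\ref{link vanishing} (which kills $s_3=l^{0,2}$, so that $\mu(q_i)=s_1+s_2+s_3=2b^{1,1}+l^{1,1}$); the cone case is checked by hand using $h^3(X)=0$, $\sigma(X)=6$ and $\mu=16$. You instead return to the vanishing-cycle exact sequence and do a pure Betti-number count: $h^3(X)=10-\operatorname{rk}(H^3_{\lim}\to H^3_{\van})$ and $\operatorname{rk}(H^3_{\lim}\to H^3_{\van})=\mu_{tot}(X)-\sigma(X)$ since the cokernel of that map is $\ker(H^4(X)\to H^4_{\lim}(X_t))$, of dimension $\sigma(X)$. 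This is cleaner in that it bypasses the Hodge-theoretic bookkeeping ($B$, $L$, and the Milnor-fibre Hodge numbers) entirely, and it treats the cone case uniformly, since the exact sequence and the identification $\dim\ker(H^4(X)\to H^4_{\lim}(X_t))=\sigma(X)$ (which rests on $\sigma(X)=b_4(X)-b_2(X)$ for isolated rational singularities) do not use the non-cone hypothesis; the paper's route, by contrast, makes the finer Hodge-level statement of Theorem~\ref{thm: coho precise} do the work and gets the corollary for free once that theorem is in place. Both arguments are sound, and the one small point you should make explicit in your version is the justification that $\dim\ker(H^4(X)\to H^4_{\lim}(X_t))=\sigma(X)$ (namely $h^4(X)=b_2(X)+\sigma(X)=1+\sigma(X)$, surjectivity onto the one-dimensional $H^4_{\lim}(X_t)$), which the paper also asserts without proof inside Theorem~\ref{thm: coho precise}.
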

\begin{proof}
    The statement follows immediately from Lemma \ref{link vanishing}, Theorem \ref{milnor fiber Hodge numbers}, and Theorem \ref{thm: coho precise} above. For the cone case, it is known that $h^3(X)=0$ (see e.g. \cite[Theorem 2.1]{Dim86}), $\defect=6$ by Corollary \ref{defect bound} and the Milnor number is 16.
\end{proof}

We've seen in Theorem \ref{defect} that the topology of the curve $C_q$ determines the global invariant $\sigma(X)$; in fact, the local invariants $B, L$ are also determined by the singularities of $C_q$. More precisely:
\begin{proposition}\label{Gr} 
	There is an isomorphism $\Gr_3^WH^3(X)\cong \Gr_1^WH^1(C_q)(-1)$ of Hodge structures.
\end{proposition}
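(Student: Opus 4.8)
The plan is to compare the mixed Hodge structure on $H^3(X)$ with that on $H^1(C_q)$ by working through the blow-up $\widetilde{X}=Bl_qX\cong Bl_{C_q}\bP^3$ and the two discriminant squares $(a)$ and $(b)$ used in the proof of Theorem \ref{defect}. First I would recall from that proof the isomorphism of mixed Hodge structures $H^3(E,\bZ)\cong h\cdot g^*H^1(C,\bZ)$, which identifies $H^3(E)$ with $H^1(C)(-1)$ as mixed Hodge structures (the cup product with the relative hyperplane class $h\in H^2(E)$ is a morphism of MHS shifting weight by $2$, and $g^*$ is a morphism of MHS). Then the Mayer--Vietoris sequence for square $(b)$ gives an exact sequence of mixed Hodge structures
\[0\rightarrow H^3(\widetilde{X})\rightarrow H^3(E)\rightarrow H^4(\bP^3),\]
and since $H^4(\bP^3)$ is pure of weight $4$ while $H^3(E)$ has weights $\leq 3$, the map $H^3(\widetilde{X})\rightarrow H^3(E)$ is an isomorphism of mixed Hodge structures. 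Combining, $H^3(\widetilde{X})\cong H^1(C_q)(-1)$ as mixed Hodge structures.

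Next I would use square $(a)$ to relate $H^3(\widetilde{X})$ back to $H^3(X)$. The Mayer--Vietoris sequence for $(a)$ reads
\[H^2(Q)\rightarrow H^3(X)\rightarrow H^3(\widetilde{X})\oplus H^3(q)\rightarrow H^3(Q),\]
and since $q$ is a point and $Q$ is a (reduced) quadric surface or plane we have $H^3(q)=H^3(Q)=0$, while $H^2(Q)$ is pure of type $(1,1)$ (it is generated by algebraic classes). Thus $H^3(X)\rightarrow H^3(\widetilde{X})$ is surjective with kernel a pure Hodge structure of weight $2$ and type $(1,1)$; in particular this kernel lies in $W_2H^3(X)$, so it maps to zero in $\Gr_3^W H^3(X)$. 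Since $\Gr_3^W$ is exact on mixed Hodge structures, applying it to the surjection $H^3(X)\twoheadrightarrow H^3(\widetilde{X})$ and noting that $W_2 H^3(\widetilde X)=0$ (because $H^3(\widetilde X)\cong H^1(C_q)(-1)$ has weights in $\{2,3\}$, but more to the point we only need the induced map on $\Gr_3^W$ to be an isomorphism, which follows from exactness of $\Gr_3^W$ together with the kernel being pure of weight $2$ and $H^3(\widetilde X)$ having no weight $>3$ part) yields
\[\Gr_3^W H^3(X)\cong \Gr_3^W H^3(\widetilde X)\cong \Gr_3^W\bigl(H^1(C_q)(-1)\bigr)=\Gr_1^W H^1(C_q)(-1).\]

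The main obstacle I anticipate is justifying carefully that every arrow above is a morphism of mixed Hodge structures and that the relevant pieces have the claimed purity, rather than any serious geometric input — the geometric content is already packaged in Lemma \ref{blow up iso} and in the cohomological analysis of the proof of Theorem \ref{defect}. Concretely, one must confirm: that Mayer--Vietoris sequences for discriminant squares are exact sequences of MHS (this is \cite[Corollary-Definition 5.37]{mixedhodge}, already cited); that $H^2$ of a reduced quadric surface (smooth quadric, quadric cone, union of two planes, or a single plane) is always a pure Hodge structure of type $(1,1)$, which one checks case by case using that each such surface has only algebraic cohomology in degree $2$; and that the Tate twist bookkeeping is correct, i.e. that cup product with $h$ realizes $H^3(E)\cong H^1(C_q)(-1)$ rather than some other twist. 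Once these are in place, exactness of $\Gr^W_\bullet$ on the category of mixed Hodge structures finishes the argument. I would also remark that this immediately recovers, via Theorem \ref{thm: coho precise}, the fact that $B$ and $L-\sigma$ are determined by the Hodge numbers of $\Gr_1^W H^1(C_q)$, tying back to the discussion preceding the proposition.
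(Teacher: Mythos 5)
Your proposal is correct and follows essentially the same route as the paper: the projective-bundle identification $H^3(E)\cong h\cdot g^*H^1(C_q,\bZ)\cong \Gr^W_1 H^1(C_q)(-1)$ on the top graded piece, Mayer--Vietoris for square $(b)$ with the weight argument against $H^4(\bP^3)$, and Mayer--Vietoris for square $(a)$ using that $H^2(Q)$ is pure of type $(1,1)$. If anything you are slightly more careful than the paper at the square-$(a)$ step, since you only claim the isomorphism after applying $\Gr_3^W$ (killing the weight-$2$ image of $H^2(Q)$) rather than asserting a full MHS isomorphism $H^3(X)\cong H^3(\widetilde X)$; your parenthetical ``$W_2H^3(\widetilde X)=0$'' is not needed and not true in general, but you correctly discard it.
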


\begin{proof}
	Let us apply the Mayer-Vietoris sequence to the discriminant square $(a)$ of Theorem \ref{defect}:
	\[\begin{tikzcd}
		0\arrow[r]&H^2(X)\arrow[r]&H^2(\widetilde{X})\arrow[r]& H^2(Q)\arrow[r]&H^3(X)\arrow[r]&H^3(\widetilde{X})\arrow[r]&0.
	\end{tikzcd}\]
	Since $X$ is a hypersurface with isolated singularities, $H^2(X)$ has a pure weight 2 Hodge structure \cite[Lemma 1.5]{KL20}, and $h^2(X)=1$. Since $\widetilde{X}=Bl_qX$, it follows that $h^2(\widetilde{X})=2$. We see that the sequence $H^2(X)\rightarrow H^2(\widetilde{X})\rightarrow H^2(Q)$ is an exact sequence of pure weight 2 Hodge structures; it follows that we have an isomorphism of mixed Hodge structures:
	$H^3(X)\cong H^3(\widetilde{X}).$
	Now let us apply the Mayer-Vietoris sequence to $(b)$:
	\[\begin{tikzcd}
		0\arrow[r]&H^3(\widetilde{X})\arrow[r]& H^3(E)\arrow[r]&H^4(\bP^3)\arrow[r]& H^4(E)\arrow[r]&0.
	\end{tikzcd}\]
	Notice that $H^4(\bP^4)$ is a pure Hodge structure of weight $4$; since $H^3(E)$ has weights at most 3, it follows that we have an isomorphism of mixed Hodge structures: $H^3(\widetilde{X})\cong H^3(E).$
	In particular, $Gr_3^WH^3(X)\cong Gr_3^WH^3(E).$ We saw in the proof of Theorem \ref{defect} that  $H^3(E,\bZ)\cong h\cdot f^*H^1(C_q,\bZ),$ via pullback and the cup-product, both of which are morphisms of mixed Hodge structures. In particular, it follows that we have an isomorphism of Hodge structures $Gr_3^WH^3(E)\cong Gr_1^WH^1(C_q)(-1).$
\end{proof}

One can compute the non-zero Hodge Du Bois numbers of the mixed Hodge structure on $H^1(C_q)$ in terms of the local invariants of the singularities of $C_q$, and in turn can compute both $B$ and $L$.

\section{Planes and rational normal cubic scrolls}\label{defect=plane/scroll}
Throughout, let $X$ be a cubic threefold with isolated singularities that is not a cone over a cubic surface. We will prove the following theorem:

\begin{theorem}[Theorem \ref{claim}]\label{defect equiv to plane or scroll}
	Let $X$ be a cubic threefold with isolated singularities that is not a cone over a cubic surface. Then the defect $\sigma(X)>0$ if and only if $X$ contains a plane or a rational normal cubic scroll.
\end{theorem}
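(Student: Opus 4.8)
The plan is to prove this equivalence by running through the possibilities for the curve $C_q$ given by the classification results of \S\ref{subsec: Vik23}, and relating the \emph{reducibility} of $C_q$ (equivalently, by Theorem \ref{defect}, the positivity of $\sigma(X)$) to the presence of a plane or cubic scroll on $X$ via the birational correspondence $Bl_q X \cong Bl_{C_q}\bP^3$ of Lemma \ref{blow up iso}. The key geometric mechanism is this: under the projection $X \dashrightarrow \bP^3$, a low-degree component $D$ of $C_q$ (a line, conic, or twisted cubic) sweeps out, together with $q$, a ruled surface in $X$ — the union of the lines through $q$ parametrised by $D$. A line in $C_q$ gives a plane (the lines through $q$ meeting a fixed line $\ell$, together with $q$, span a plane in $X$); a conic in $C_q$ gives a cubic scroll; a twisted cubic in $C_q$ gives a cubic scroll as well. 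Conversely, a plane or cubic scroll $\Sigma \subset X$ through $q$ forces $C_q$ to acquire such a component. So the first step is to make this dictionary precise: show that $\sigma(X)>0$ iff $C_q$ is reducible (immediate from Theorem \ref{defect} once one checks the corank $2$ bookkeeping — there $C_q$ always has $\geq 2$ components, so $\sigma(X) = k-2 > 0$ iff $k \geq 3$, and one must verify the three-component case always produces a line or conic), and then show reducibility of $C_q$ is equivalent to $X$ containing a plane or cubic scroll.

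The forward direction $(1)\Rightarrow(2)$: assume $\sigma(X)>0$, so $C_q$ is reducible (with an extra component beyond the forced two in the corank $2$ case). By Corollary \ref{components An} (corank $\leq 1$ case), Proposition \ref{corank 2} together with Lemmas \ref{classofc}, \ref{classofc A2} and the analysis of the plane-curve components (corank $2$ case), and the corank $3$ case where $Q_q$ is a double plane so every component of $C_q$ sits in a plane and has degree $\leq 3$, one concludes that $C_q$ always contains a component of degree $1$, $2$, or $3$ that is either a line, a conic, or a twisted (rational normal) cubic — the point being that a component of degree $3$ which is a plane cubic does \emph{not} directly give a scroll, so one has to check that whenever $C_q$ is reducible there is genuinely a line, conic, or twisted cubic among the components (this is where one leans hardest on the explicit Picard-class lists: on $\bF_2$ the allowed component classes are $f$, $\sigma+2f$, $\sigma+3f$, etc., and $f \leftrightarrow$ line, $\sigma+2f\leftrightarrow$ conic, $\sigma+3f\leftrightarrow$ twisted cubic, and one rules out a decomposition of $[\widetilde C]$ into only "big" classes). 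Then: if $C_q$ contains a line $\ell$, the lines in $X$ through $q$ parametrised by points of $\ell$ fill out a plane $P \subset X$ (the span of $q$ and $\ell$'s worth of lines is $2$-dimensional because $\ell$ is a line in $Q_q \subset \bP^3$, and a plane in $\bP^3$ together with $q$ spans a $\bP^3$ but the surface swept is the $2$-plane through $q$ determined appropriately); if $C_q$ contains a conic or a twisted cubic, the corresponding ruled surface is a rational normal cubic scroll (degree $3$: the scroll over a conic is the cubic scroll in $\bP^4$, and over a twisted cubic one gets the same surface realised differently). One must check these swept surfaces actually lie on $X$ and are of the claimed type — a direct computation with the conic-bundle / ruled structure, not hard but needs care about whether the "scroll" degenerates.

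The reverse direction $(2)\Rightarrow(1)$: suppose $X$ contains a plane $P$ or a rational normal cubic scroll $\Sigma$. If $P \subset X$ is a plane, then $[P] \in \mathrm{Weil}(X)$ is not $\bQ$-Cartier: $P \cdot \ell = 1$ for a line $\ell \subset P$, while any Cartier (hence a multiple of the hyperplane class, since $\Pic(X)$ is generated by $\calO_X(1)$ for a hypersurface $X\subset\bP^4$ with $\dim \geq 3$) divisor has even... — more cleanly, $\calO_X(1)^2 \cdot P = 3$ is not divisible appropriately, or directly: $P$ and a residual quadric surface $\Sigma'$ satisfy $P + \Sigma' \sim \calO_X(1)$, but neither is Cartier; hence $\sigma(X) \geq 1$. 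Similarly a cubic scroll $\Sigma$ has a residual plane or the pair is linked by a hyperplane section, forcing $\sigma(X)>0$. Alternatively and more robustly, one argues that containing such a surface forces $b_4(X) > b_2(X)$ via the formula $\sigma = b_4 - b_2$ of \cite{NS95}, by exhibiting the class of $P$ or $\Sigma$ as independent from the hyperplane class in $H^4$.

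\textbf{Main obstacle.} The genuinely delicate step is the forward direction's claim that \emph{reducibility of $C_q$ always produces a line, a conic, or a twisted cubic} (not merely a plane cubic or higher-genus component), uniformly across all coranks. For corank $\leq 1$ this is Corollary \ref{components An} (so it's given). For corank $3$ it follows because $Q_q$ is a double plane and $C_q \subset$ a plane, forcing components of degree $\leq 3$ with at least one of small degree. The real work is corank $2$: there $C_q = C_1 \cup C_2$ with each $C_i$ a plane cubic, and an "extra" component (making $k\geq 3$) means one of the $C_i$ is itself reducible — a plane cubic breaks as line $+$ conic, or three lines, or line $+$ line $+$ line, etc. — and in every such breakup a line or a conic appears. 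So actually the argument closes, but one must be careful that the residual/scroll-swept surface is honestly a plane or a \emph{rational normal} cubic scroll and does not degenerate to a cone or a non-normal surface; handling that degeneration case (and confirming it still counts, or still forces $\sigma>0$) is where I expect to spend the most effort. A secondary subtlety is making the scroll construction work when the base conic or twisted cubic is itself singular or meets the vertex structure of $Q_q$ in the corank $2$/$3$ cases.
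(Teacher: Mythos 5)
Your skeleton matches the paper's: the easy direction via ``on a cubic hypersurface every Cartier divisor is a hypersurface section, and neither a plane nor a nondegenerate degree-$3$ surface can be one'' (the paper's Lemma \ref{plane => not palindromic}), and the hard direction by projecting from a singular point, invoking Theorem \ref{defect} to get reducibility of $C_q$, and extracting a low-degree component; your corank $\geq 2$ analysis (an extra component forces one of the two plane cubics to break off a line, hence a plane in $X$) is exactly the paper's Lemma \ref{lem: corank 2}. However, your central ``dictionary'' is wrong in the two cases that carry the weight, and the error is structural rather than a borderline degeneration. The union of the lines through $q$ parametrised by a component $D\subset C_q$ is the \emph{cone} over $D$ with vertex $q$, of degree $\deg D$. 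A conic component therefore sweeps out a quadric cone of degree $2$, not a cubic scroll; the correct move (made in the paper's Lemma \ref{lem: An}) is to pass to the hyperplane $\langle q,\Span(D)\rangle$ and take the residual \emph{plane} of that quadric cone in the cubic surface it cuts on $X$. Worse, a twisted cubic component sweeps out the cone over a twisted cubic: a degree-$3$ surface that is \emph{always} singular at $q$, hence never a rational normal cubic scroll in the sense of the theorem (which requires smoothness). This is not an occasional degeneration to be handled with care --- it happens every single time.

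The case where the only low-degree components of $C_q$ are twisted cubics genuinely occurs: for the $6$-nodal cubic, $C_q$ splits on the smooth quadric into two twisted cubics of bidegrees $(1,2)$ and $(2,1)$ meeting in five nodes, and for $A_n$ singularities one has decompositions of $[\widetilde{C}]=2\sigma+6f$ into two classes $\sigma+3f$ on $\bF_2$ (Lemma \ref{classofc}). In these cases your construction produces neither a plane nor a smooth scroll, and there is no residual-surface trick available either, since the cone over a twisted cubic spans all of $\bP^4$. The paper closes this case by a genuinely different mechanism: the two twisted cubics lie on the cubic surface $S_q$ and give a pair of determinantal representations of it, exhibiting $X$ as $V(\det A)$ for a $3\times 3$ matrix of linear forms, which contains an honest smooth cubic scroll by \cite{flops} (and the nodal case is handled by a direct citation of the same source). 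You correctly flagged ``the scroll degenerates to a cone'' as the main obstacle, but resolving it requires this additional idea, not more care with the swept surface; as written, the forward direction does not close.
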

Recall that a rational normal cubic scroll is a a smooth connected 
surface  $\Sigma\subset \bP^4$ of minimal degree with $\text{span}(\Sigma)=\bP^4$. Such a surface is isomorphic to $\bP^2$ blown up in a point.
 It is clear that if $X$ contains a plane or a cubic scroll, then $\sigma(X)>0$ -- we include a proof for completeness. 

\begin{lemma} \label{plane => not palindromic}
	If a cubic threefold $X$ contains a plane or a cubic scroll, then the defect $\sigma(X)>0.$
\end{lemma}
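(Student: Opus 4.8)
The plan is to show that containing a plane or a cubic scroll produces a Weil divisor on $X$ that is not $\bQ$-Cartier, which forces $\sigma(X)>0$. The cleanest way to detect this is via the characterization $\sigma(X)=b_4(X)-b_2(X)$ valid for a projective threefold with isolated rational singularities (cited from \cite{NS95} just above), or equivalently by producing a surface $S\subset X$ whose class in $H^4(X,\bQ)$ is not a multiple of the hyperplane class squared, i.e. such that $S$ is not homologous to a complete intersection. Since $b_2(X)=1$ with $H^2(X,\bQ)$ generated by the hyperplane class, it suffices to exhibit two surfaces in $X$ that are linearly independent in $H_2(X,\bQ)\cong H^4(X,\bQ)$; equivalently, to show the plane (resp. one ruling component of the scroll) is not numerically proportional to a hyperplane section of $X$.

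The key steps, in order, are as follows. First, suppose $P\subset X$ is a plane. A general hyperplane section $Y=H\cap X$ is a smooth cubic surface (we may choose $H$ to avoid the finitely many singular points and to be transverse), and on the smooth surface $Y$ the class of $P\cap Y$ — a line $\ell$ — is not proportional to the hyperplane class $\mathcal{O}_Y(1)$, since $\ell^2=-1$ while $(\mathcal{O}_Y(1))^2=3$ and $\ell\cdot\mathcal{O}_Y(1)=1$, so $\ell$ and a conic section are numerically independent on $Y$. Pushing forward to $X$, the classes $[P]$ and $[H\cap X]$ in $H_2(X,\bQ)$ must then be independent: if $[P]=\lambda[H\cap X]$ in $H^4(X,\bQ)$, restricting (cupping) with $[H]$ and comparing with the computation on $Y$ gives a contradiction, because intersection numbers of $1$-cycles on the smooth surface $Y$ are computed upstairs and would have to be consistent. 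Hence $b_4(X)\geq 2>1=b_2(X)$, so $\sigma(X)>0$. Second, for a cubic scroll $\Sigma\subset X$: a rational normal cubic scroll is $\bP^2$ blown up at a point, embedded so that $\mathrm{span}(\Sigma)=\bP^4$; it carries the class $f$ of a line of the ruling and the class $e$ of the $(-1)$-section, and a hyperplane section of $\Sigma$ is numerically $e+2f$ (degree $3$). Intersecting with a general hyperplane $H$ of $\bP^4$ gives a twisted cubic curve on $\Sigma\cap Y$, and again on the smooth cubic surface $Y$ this curve is not proportional to $\mathcal{O}_Y(1)$: a twisted cubic on a cubic surface has self-intersection $1$ and degree $3$, while $\mathcal{O}_Y(1)$ has self-intersection $3$, so they are numerically independent classes in $H_2(Y,\bQ)$. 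The same restriction-to-$Y$ argument then shows $[\Sigma]$ and $[H\cap X]$ are independent in $H^4(X,\bQ)$, giving $b_4(X)\geq 2$ and $\sigma(X)>0$.

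Alternatively, and perhaps more transparently, one can argue directly with divisor classes: the plane $P$ (resp. the scroll $\Sigma$) is a Weil divisor on $X$, and if it were $\bQ$-Cartier then $P$ would be $\bQ$-linearly equivalent to a multiple $\tfrac{d}{3}H|_X$ for some integer $d$ (since $\mathrm{Pic}(X)\otimes\bQ=\bQ\cdot H|_X$), hence numerically $P\equiv \tfrac{d}{3}H|_X$. Comparing degrees on a general line $L\subset X$ not meeting the singular locus: $P\cdot L = \tfrac{d}{3}(H\cdot L)=\tfrac{d}{3}$, while varying the line $L$ inside and outside $P$ shows $P\cdot L$ takes at least two distinct values (e.g. $0$ or $1$ for lines disjoint from or meeting $P$), which is impossible for a $\bQ$-Cartier divisor whose degree on a $1$-cycle depends only on the numerical class of that cycle in $X$. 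An entirely analogous computation with the ruling of the scroll handles the second case. Either way one concludes $P$ (resp. $\Sigma$) is not $\bQ$-Cartier, so $\mathrm{Weil}(X)/\mathrm{Cart}(X)\neq 0$ and $\sigma(X)>0$.

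The main obstacle is making the restriction argument to the general hyperplane section rigorous: one must ensure a general hyperplane $H\in(\bP^4)^*$ meets $X$ in a \emph{smooth} cubic surface $Y$ (Bertini, since $X$ has only finitely many singular points and is not a cone, so its singular locus is not a linear space containing enough of a pencil to force all sections singular), that $P\cap Y$ and $\Sigma\cap Y$ are the expected curves (a line, a twisted cubic), and — crucially — that the intersection-theoretic computation on the smooth surface $Y$ genuinely descends to a statement about the independence of cycle classes on the singular threefold $X$. This is where one invokes that for threefolds with isolated rational singularities the relevant groups behave well (again via \cite{NS95}, or by noting that Cartier divisor classes restrict compatibly and intersection numbers of a Cartier divisor with curves are numerical invariants), so that a hypothetical numerical proportionality $[P]\equiv \lambda[H\cap X]$ on $X$ would contradict the honest computation $\ell\not\equiv (\text{conic})$ on the smooth surface $Y$. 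Once this compatibility is in place, the rest is the elementary self-intersection bookkeeping on cubic surfaces indicated above.
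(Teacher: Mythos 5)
Your proof is correct in substance, but it takes a genuinely different route from the paper's. The paper's argument is two lines long: if $\sigma(X)=0$ then every Weil divisor is Cartier, and since $\Pic(X)$ is generated by $\calO_X(1)$, every effective Cartier divisor on $X$ is cut out by a hypersurface of $\bP^4$; a plane has degree $1<3$ and so cannot be such an intersection, while a rational normal cubic scroll has degree $3$ but is nondegenerate, whereas $X\cap H$ is contained in the hyperplane $H$. You instead verify $b_4(X)\geq 2>1=b_2(X)$ using the formula $\sigma(X)=b_4(X)-b_2(X)$, by restricting to a general smooth hyperplane section $Y$ and running the Hodge-index computation $(\ell\cdot h)^2=1\neq \ell^2h^2=-3$ (resp.\ $9\neq 3$ for the twisted cubic). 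This works and even produces an explicit class in $H_4(X,\bQ)$ witnessing the defect, but it is heavier machinery for the same conclusion, and it hinges on one genuinely nontrivial point: you need the Gysin restriction $H_4(X,\bQ)\to H_2(Y,\bQ)$ (well defined because the general $Y$ lies in the smooth locus of $X$, so has a tubular neighbourhood) to carry $[P]$ to $[P\cap Y]$ and $[X\cap H']$ to the hyperplane class of $Y$. Your parenthetical justification that ``intersection numbers of $1$-cycles on $Y$ are computed upstairs'' is literally false --- self-intersection on $Y$ is not determined by the class of the curve in $X$ --- but the correct statement, which you do articulate in your final paragraph, is that a proportionality $[P]=\lambda[X\cap H']$ in $H_4(X,\bQ)$ would force the proportionality $[\ell]=\lambda h$ in $H^2(Y,\bQ)$, which the index computation on $Y$ rules out.

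One concrete gap in your ``alternative'' argument: you assert that $P\cdot L$ takes both the value $0$ and a positive value as $L$ ranges over lines of $X$, but the existence of a line of $X$ disjoint from $P$ (resp.\ from the scroll) is not obvious and is not proved; when $X$ contains a plane the Fano surface $F(X)$ is reducible and one must check that not every line meets $P$. A cleaner variant avoids this entirely: if $mP$ were Cartier then $\calO_X(mP)\cong\calO_X(k)$ for some $k\in\bZ$, and restricting to a line $L\subset P$ avoiding $\Sing(X)$ gives $\deg\calO_X(mP)|_L=m\deg N_{P/X}|_L=-m<0$, while restricting to a general complete-intersection curve $X\cap H_1\cap H_2$ (which meets $P$ properly at smooth points of $X$) gives a nonnegative degree; the two force $k<0$ and $k\geq 0$ simultaneously. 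With either repair your argument is sound.
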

\begin{proof}
    Suppose that $\defect=0$; then every Weil divisor on $X$ is Cartier, and in particular for every Cartier divisor $Z\subset X$ there is a hypersurface $H\subset \bP^4$ with $Z=X\cap H$ (see for \cite[Remark 1.2]{CheltsovFactorial3fold} for instance). This is a contradiction: indeed, a plane can never be obtained as such an intersection, and a rational normal scroll is a nondegenerate surface of degree $3$. 
\end{proof}

To prove the converse we again will exploit the geometry of the associated curve $C_q$. Combining Lemmas \ref{lem: nodes}, \ref{lem: An}, \ref{lem: corank 2} below completes the proof of Theorem \ref{defect equiv to plane or scroll}.

\begin{lemma}\label{lem: nodes}
	Let $X$ be a cubic threefold with only isolated nodal singularities, and suppose $\sigma(X)>0$. Then $X$ contains a plane or a rational normal cubic scroll.
\end{lemma}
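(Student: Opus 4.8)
The plan is to deduce the statement from Theorem~\ref{defect} by a case analysis of the irreducible components of $C_q$, regarded as a curve of class $(3,3)$ on the smooth quadric $Q_q\cong\bP^1\times\bP^1$. Since $q$ is a node, $\corank(q)=0$, so $\sigma(X)=k-1$, and the hypothesis $\sigma(X)>0$ just says that $C_q$ is reducible. An irreducible curve on $\bP^1\times\bP^1$ has class $(1,0)$, $(0,1)$, or $(a,b)$ with $a,b\ge 1$, and --- since a plane meets $Q_q$ in a conic --- a class of degree $2$, respectively $3$, is represented only by a smooth conic, respectively a twisted cubic. Decomposing the class $(3,3)$ into such classes, at least one of the following holds: (i) some component of $C_q$ is a line of a ruling of $Q_q$; (ii) some component is a smooth conic; (iii) $C_q=T_1\cup T_2$ with $T_1,T_2$ twisted cubics of classes $(1,2)$ and $(2,1)$.

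In case (i), the cone over that line with vertex $q$ is a plane contained in $X$, since each of its points lies on a line of $X$ through $q$. In case (ii), let $\Pi\cong\bP^2$ be the plane spanned by the conic and set $\Lambda=\langle q,\Pi\rangle\cong\bP^3$; the cone $K$ over the conic with vertex $q$ is an irreducible quadric surface contained in the cubic surface $X\cap\Lambda$, so the defining cubic of $X\cap\Lambda$ is divisible by the defining quadric of $K$ and $X\cap\Lambda=K\cup P$ with $P$ a plane contained in $X$. The remaining case (iii) is where the cubic scroll appears, and I expect it to be the main obstacle. I would fix a general point $R\in T_1$; for $R'\in T_1$, the plane $P_{R'}=\langle q,R,R'\rangle$ contains the two lines $\langle q,R\rangle$ and $\langle q,R'\rangle$ of $X$ (note $P_{R'}$ meets the target $\bP^3$ of the projection from $q$ in the chord $\langle R,R'\rangle$ of $T_1$, which for general $R,R'$ meets $C_q$ only in $\{R,R'\}$). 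Hence the plane cubic $X\cap P_{R'}$ splits as $\langle q,R\rangle\cup\langle q,R'\rangle\cup\ell_{R'}$ with $\ell_{R'}$ a residual line not passing through $q$. As $R'$ varies over $T_1$, the lines $\ell_{R'}$ all meet the fixed line $\langle q,R\rangle$ and sweep out a ruled surface $\Sigma\subseteq X$ containing $\langle q,R\rangle$ as a section of degree $1$; since a ruling is a line, $\Sigma$ has minimal degree, so it is either a rational normal cubic scroll spanning $\bP^4$, or a smooth quadric surface --- and in the latter case $X$ contains a plane exactly as in case (ii).

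What remains, and what I expect to be the crux, is to verify the genericity underlying the construction in case (iii): that $X\cap P_{R'}$ is three distinct lines for general $R'$, and that the rulings $\ell_{R'}$ are pairwise disjoint, so that $\Sigma$ is a genuine (possibly degenerate) smooth rational normal scroll rather than, for instance, a cone over a twisted cubic. The plan is to show that each failure mode forces $X$ to be singular along $\langle q,R\rangle$ away from $q$, or forces all the $\ell_{R'}$ to pass through a common point of $\langle q,R\rangle$ --- which must then be $q$ itself, since $X$ has only isolated singularities and $R$ is general, so that $\langle q,R\rangle$ meets $\Sing(X)$ only in $q$ --- and hence forces $\ell_{R'}$ to coincide with $\langle q,R\rangle$ or $\langle q,R'\rangle$, contradicting the splitting above. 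The first alternative is excluded because the planes $P_{R'}$ span $\bP^4$ while a tangent hyperplane of $X$ at a smooth point of $\langle q,R\rangle$ is a proper subspace. Making this precise, and controlling the finitely many special values of $R'$ (chords of $T_1$ lying on $Q_q$, nodes of $C_q$, and planes along which $X$ is tangent), is the main work.
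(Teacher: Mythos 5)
Your reduction via Theorem~\ref{defect} and the decomposition of the class $(3,3)$ on $Q_q\cong\bP^1\times\bP^1$ is sound: the trichotomy (line component / conic component / two twisted cubics of classes $(1,2)$ and $(2,1)$) is complete, since $C_q$ is reduced when $X$ has isolated singularities, and your cases (i) and (ii) are fully proved. The genuine gap is in case (iii), and it is not only the genericity checks you defer: the inference ``$\Sigma$ is ruled by lines and has a line unisecant, hence has minimal degree'' is false as a general principle. A projection of the scroll $S(1,3)\subset\bP^5$ into $\bP^4$ is an irreducible nondegenerate ruled surface of degree $4$ with a line directrix, and nothing in your construction yet excludes that your $\Sigma$ has degree $4$, is degenerate, or is a cone over a twisted cubic. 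What actually controls the degree here is projection from $q$: each $\ell_{R'}$ projects to the chord $\langle R,R'\rangle$ of $T_1$, so $\pi_q(\Sigma)$ is the quadric cone of chords of $T_1$ through $R$, and $\pi_q|_\Sigma$ is birational because a twisted cubic has no trisecants; hence $\deg\Sigma=2+\mathrm{mult}_q\Sigma$, and you would still need to show $q\in\Sigma$ with $\mathrm{mult}_q\Sigma=1$ and that $\Sigma$ spans $\bP^4$. Your fallback argument that a common point of all the $\ell_{R'}$ ``must be $q$ since it must be singular'' also needs care: a smooth point of a cubic threefold can carry a one-dimensional family of lines, so such a point need not lie in $\Sing(X)$.

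For comparison, the paper avoids the synthetic construction entirely. It first disposes of the case of four coplanar nodes, which already forces a plane in $X$ by \cite[Lemma 2.10]{Avilov}; for nodes in general position it observes that reducibility of $C_q$ forces either a line or conic component (hence a plane, already excluded) or exactly the two-twisted-cubics configuration, which pins the number of nodes at six; and it then quotes \cite[Prop.~23]{flops} for the fact that a six-nodal cubic threefold with nodes in general position contains a cubic scroll. The analogous two-twisted-cubics case for $A_n$ singularities (Lemma~\ref{lem: An}) is handled via the two determinantal representations of $S_q$ attached to $|T_1|$ and $|T_2|$, following \cite{dolgnodal} and \cite[Prop.~17]{flops}. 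Either device --- the citation or the determinantal argument --- would close your case (iii) without the degree and genericity analysis that your approach still requires.
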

\begin{proof}
    The case of nodes is essentially contained in \cite{Finkelnberg}; we provide a proof using the language of this note for completion.
    Let $q\in X$ be a node, and project to obtain $C_q\subset \bP^3$. The curve $C_q$ lies on a smooth quadric, and the singularities of $C_q$ are in one-to-one correspondence with the remaining singularities of $X$. Since $\sigma(X)>0$, the curve $C_q$ is reducible by Theorem \ref{defect}. If $X$ has at least four nodes contained in a plane, then the plane is contained in $X$ by \cite[Lemma 2.10]{Avilov}, and we are done. Thus it suffices to consider $X$ with nodes in general position. In this case, we see that $C_q$ has at least $5$ nodes, thus $X$ must have at least $6$ nodes. If the number of nodes is strictly greater than $6$, the curve $C_q$ either contains a line, or a plane conic -- both imply the existence of a plane in $X$. 
    If $X$ has exactly $6$ nodes in general position, then $X$ contains a cubic scroll by \cite[Prop. 23]{flops}. 
\end{proof}

\begin{lemma}\label{lem: An}
	Let $X$ be a cubic threefold with a combination of $A_n$ singularities, with at least one with $n\geq 2$. Assume that $\sigma(X)>0$. Then $X$ contains a plane or a rational normal cubic scroll.
\end{lemma}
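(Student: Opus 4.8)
The plan is to use the projection method applied to a singular point $q$ of maximal type $A_n$ with $n \geq 2$, and exploit the classification of possible components of the strict transform $\widetilde{C}$ in $\Pic(\bF_2)$ provided by Lemmas \ref{classofc} and \ref{classofc A2}. Since $\corank(q) = 1$, Theorem \ref{defect} tells us $\sigma(X) = k - 1$, so the hypothesis $\sigma(X) > 0$ forces $k \geq 2$, i.e. $C_q$ is reducible. By Corollary \ref{components An}, a reducible $C_q$ must contain a line, a conic, or a twisted cubic as an irreducible component. The strategy is to translate each such low-degree component of $C_q$ into a surface swept out in $X$ by the corresponding family of lines through $q$, and identify that surface as a plane or a rational normal cubic scroll.

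The key geometric input is the correspondence from the projection diagram: a point of $C_q$ corresponds to a line in $X$ through $q$, so an irreducible component $C' \subset C_q$ of degree $d$ sweeps out a surface $\Sigma' \subset X$ that is the union of the lines parametrized by $C'$, and this surface is (the image of) a $\bP^1$-bundle or cone over $C'$. First I would handle the case where $C_q$ contains a line $\ell$: the lines in $X$ through $q$ parametrized by $\ell$ all meet $q$ and vary in a one-dimensional linear family, so they sweep out a plane in $X$ (a cone over $\ell$ with vertex $q$, which is $2$-dimensional and linear). Next, if $C_q$ contains a plane conic, the corresponding lines through $q$ sweep out a surface in a $3$-plane, which one checks is again a plane contained in $X$ — here one uses that the lines all pass through $q$ and the conic is planar, so the swept surface is a quadric cone, but being contained in the cubic $X$ it must degenerate to a plane (or one argues directly that a plane conic in $C_q$ combined with the structure of $Q_q$ forces a plane in $X$, as in the nodal case of Lemma \ref{lem: nodes} using a result like \cite[Lemma 2.10]{Avilov}). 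Finally, if $C_q$ contains a twisted cubic, the associated lines through $q$ sweep out a cubic scroll: a twisted cubic $C' \cong \bP^1$ with the lines forming a cone over it with vertex $q$; resolving at $q$ (i.e. passing to $\widetilde{X} = Bl_q X \cong Bl_{C_q}\bP^3$) one sees the corresponding divisor is a $\bP^1$-bundle over $\bP^1$, whose image in $\bP^4$ is a nondegenerate surface of degree $3$ spanning a $\bP^4$, hence a rational normal cubic scroll — and one must check nondegeneracy, i.e. that the twisted cubic together with $q$ spans all of $\bP^4$.

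I would organize the proof by the three possible low-degree components guaranteed by Corollary \ref{components An}: line, conic, twisted cubic, and in each case exhibit the swept surface and identify it. For the bookkeeping, the class computations of Lemmas \ref{classofc} and \ref{classofc A2} are exactly what certify that a reducible $\widetilde{C}$ contains one of the classes $f$ (line), $\sigma + 2f$ (conic), or $\sigma + 3f$ (twisted cubic), and one has to be slightly careful with the $A_2$ case where the possible classes are $H, 2H, 3H$ with $H = \sigma + 2f$ — a reducible $\widetilde{C}$ then contains a component of class $H$ or $2H$, and $H$ corresponds to a conic, so one still lands in a case already treated.

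The main obstacle I expect is the identification of the swept surface as genuinely a \emph{rational normal cubic scroll} in the twisted-cubic case — specifically verifying smoothness and nondegeneracy of the surface swept by the lines through $q$ over a twisted cubic component, rather than a cone or a degenerate scroll. This requires understanding how the twisted cubic sits relative to the cone point of $Q_q$: one needs the strict transform $\widetilde{C}$ to meet the $(-2)$-curve $\sigma$ in the right way so that the surface of lines is a smooth $\bP^1$-bundle (isomorphic to $\bP^2$ blown up at a point, as in the definition recalled after Theorem \ref{defect equiv to plane or scroll}) and spans $\bP^4$. A careful local analysis, or invoking a classification of surfaces of minimal degree (del Pezzo / scroll dichotomy), should close this gap, as should the observation that the only nondegenerate degree $3$ surface in $\bP^4$ is the rational normal scroll. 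I would also double-check the conic case to make sure the swept surface is a plane and not a genuine quadric cone that fails to lie in $X$ — this is where an incidence/counting argument in the spirit of Lemma \ref{lem: nodes} is needed.
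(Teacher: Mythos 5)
Your reduction is the same as the paper's: project from an $A_n$ point $q$ with $n\geq 2$, use Theorem \ref{defect} to see that $C_q$ is reducible, and use Corollary \ref{components An} (via Lemmas \ref{classofc} and \ref{classofc A2}) to extract a line, a conic, or a twisted cubic component. The line case is correct. The conic case reaches the right conclusion, but your justification is off: the lines of $X$ through $q$ lying over a plane conic sweep out an irreducible quadric cone $T$, and $T$ does not ``degenerate to a plane''; rather, $T$ is a degree-$2$ component of the cubic surface $X\cap\langle q,P\rangle$, where $P$ is the plane of the conic, so it is the \emph{residual} degree-$1$ component of that intersection which is the plane you want. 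That is an easy repair.

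The twisted cubic case, however, contains a genuine gap that your proposed fixes cannot close. The union of the lines of $X$ through $q$ parametrized by any curve is a cone with vertex $q$; over a twisted cubic component it is the cone over a twisted cubic, a nondegenerate degree-$3$ surface in $\bP^4$ that is \emph{singular} at $q$. By the paper's definition a rational normal cubic scroll is smooth (isomorphic to $\bP^2$ blown up at a point), and the cone over a twisted cubic is a different surface of minimal degree; your claimed ``observation that the only nondegenerate degree $3$ surface in $\bP^4$ is the rational normal scroll'' is false precisely because the classification of minimal-degree surfaces also contains this cone, and no local analysis will smooth out its vertex. So the swept surface can never be the scroll you need, and this case genuinely occurs: by the class computation it is the situation where $\widetilde{C}$ is a union of two components of class $\sigma+3f$ and nothing else. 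The paper handles it by an entirely different mechanism: the two twisted cubics $\Gamma_1,\Gamma_2$ on the cubic surface $S_q$ give transposed determinantal representations of $S_q$, which propagate to a determinantal representation $X=V(\det A)$ with $A$ a $3\times 3$ matrix of linear forms on $\bP^4$, and then \cite[Prop 17]{flops} produces a rational normal cubic scroll inside $X$. You would need to import this (or an equivalent) argument to complete the proof.
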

\begin{proof}
    Let $q\in X$ be the worst singularity (i.e $A_n$ for $n\geq 2)$; the curve $C_q\subset \bP^3$ lies on the quadric cone $Q_q$. The blow up $\widetilde{Q}_q$ of $Q_q$ at the cone point is isomorphic to $\bF_2$; let $\widetilde{C}\subset \bF_2$ be the strict transform of $C_q\subset Q_q$. Applying Theorem \ref{defect}, the curve $\widetilde{C}$ is reducible. By Corollary \ref{components An}, whenever $\widetilde{C}$ is reducible, it contains either the preimage of a ruling $\gamma$ of $Q_q,$ the preimage of a hyperplane section $\Gamma$ of $Q_q$ missing the cone point, or the preimage of two twisted cubics $\Gamma_1, \Gamma_2\subset \bP^3$.
  
    In the first two cases, $X$ clearly contains a plane; it suffices to consider the latter. We claim that $X$ contains a rational normal cubic scroll. Indeed, $\Gamma_1, \Gamma_2$ are two twisted cubics lying on the cubic surface $S_q$. Following the argument of \cite[Thm 3.5]{dolgnodal}, the linear systems $|\Gamma_1|, |\Gamma_2|$ define two determinantal representations of the surface $S_q$, each obtained from the other by taking the transpose of the matrix. In an identical way to \cite[Thm 3.5]{dolgnodal}, the cubic threefold $X$ is isomorphic to a hypersurface $V(\det(A))$, where $A$ is a $3\times 3$ matrix with linear forms in coordinates on $\bP^4$. By \cite[Prop 17]{flops}, it follows that $X$ contains a rational normal cubic scroll (see also \cite[Lemma 4.1.1]{hassett}).
\end{proof}

\begin{lemma}\label{lem: corank 2}
	Let $X\subset\bP^4$ be a cubic threefold that is not a cone over a cubic surface. Assume that there exists at least one singularity $q\in X$ of corank $\geq 2$. Assume that $\sigma(X)>0.$ Then $X$ contains a plane.
\end{lemma}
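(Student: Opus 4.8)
The plan is to leverage the corank $\geq 2$ hypothesis together with Theorem \ref{defect} and the structure results of \S\ref{subsec: Vik23}. The key observation is that when $q$ has corank $\geq 2$, the quadric $Q_q\subset\bP^3$ is already degenerate — either a double plane (corank 3) or a union of two distinct planes $P_1\cup P_2$ meeting in a line $L$ (corank 2) — so the cone $\widehat{Q}_q\subset X$ over $Q_q$ with vertex $q$ already contains planes inside $X$. The subtlety is that a plane in the \emph{tangent cone} picture corresponds to a plane in $\bP^4$ only if that plane is actually contained in $X$, and this need not be automatic: the cone over $P_i$ with vertex $q$ is a hyperplane $\Pi_i\subset\bP^4$, not a subvariety of $X$. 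So the real content is to extract from the components of $C_q$ an honest plane inside $X$.

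First I would handle the corank $3$ case. By Proposition \ref{corank 2} (the corank 3 statement), $q$ is the unique singularity, its type is among $U_{12},S_{11},Q_{10},T_{4pq},T_{333}$, and $C_q$ is reducible precisely when $q$ is $T_{444},T_{344},T_{334}$ or... — more carefully, $C_q$ is irreducible iff $q$ is $T_{333},T_{334}$ or $Q_{10}$. Since $\sigma(X)>0$ forces $C_q$ reducible by Theorem \ref{defect}, we are in the case $U_{12},S_{11},T_{444}$ or $T_{344}$. Here $Q_q$ is a double plane $2P$, and $C_q=P\cap S_q$ (with multiplicity) is a plane cubic inside the plane $P\subset\bP^3$. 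The cone over a line component of this plane cubic — or more directly, one checks via the local equation $f=x_4 g_2+g_3$ with $g_2$ the square of a linear form that $X$ contains the plane spanned by $q$ and a line in $V(g_2,g_3)$, provided such a line exists; reducibility of the plane cubic $C_q$ gives a line or the cubic degenerates appropriately. I would make this explicit using the normal forms from \cite{Arnold2012} Ch. 15.1, or cite \cite{viktorova2023classification} directly for the statement that these specific corank-3 singularities force a plane in $X$.

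For the corank $2$ case, by Proposition \ref{corank 2} we have $Q_q=P_1\cup P_2$ with $P_1\cap P_2=L$, and $C_q=C_1\cup C_2$ where $C_i\subset P_i$ is a plane cubic. The hyperplane $\Pi_i=\langle q,P_i\rangle\subset\bP^4$ cuts $X$ in a cubic surface containing the cone over $C_i$. Now $\sigma(X)>0$: since $C_q$ always has at least two components, having positive defect by Theorem \ref{defect} ($\sigma = k-2$) means $k\geq 3$, so at least one of $C_1,C_2$ — say $C_1$ — is itself reducible, hence contains a line $\ell\subset P_1$ (a plane cubic that is reducible contains a line). The cone over $\ell$ with vertex $q$ is a plane $\Pi_\ell\subset\bP^4$; every line through $q$ meeting $\ell$ lies on $X$ by definition of $C_q$, so $\Pi_\ell\subset X$, and we are done. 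The step requiring care is confirming that a reducible plane cubic necessarily contains a line defined over the ground field — this holds since a plane cubic is reducible iff it is a union of a line and a conic, or three lines, or has a linear component with multiplicity — in all cases a line appears. I would also need to address the non-reduced possibilities (when $C_i$ is non-reduced, e.g. a triple line or line-plus-double-line), but these even more evidently contain a line.

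**The main obstacle** I anticipate is the bookkeeping in the corank-3 case: unlike corank 2, the double-plane structure of $Q_q$ means $C_q=V(g_2,g_3)$ is a non-reduced scheme (the reduced curve lives in a single plane $P$ of $\bP^3$), and one must argue carefully that reducibility of the \emph{reduced} curve $(C_q)_{\mathrm{red}}$ — which is what controls $k$ and hence $\sigma(X)$ in Theorem \ref{defect} — produces a genuine line through $q$ on $X$. The cleanest route is probably to invoke the explicit normal forms: for each of $U_{12},S_{11},T_{444},T_{344}$ one writes down $g_2,g_3$ and exhibits the plane directly, or else cite the relevant computation in \cite{viktorova2023classification} where these curves were analyzed component-by-component. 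Assuming that reference does the work, the lemma follows, and the corank-2 argument above is self-contained given Proposition \ref{corank 2}.
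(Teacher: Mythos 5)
Your proposal is correct and follows essentially the same route as the paper: project from $q$, use Theorem \ref{defect} to force enough components of $(C_q)_{\mathrm{red}}$ that one of the plane cubics is reducible, extract a line component, and take the plane it spans with $q$. The extra caution you express in the corank $3$ case is not needed --- a reducible plane cubic over $\bC$ always has a degree-one component, so no appeal to normal forms is required, exactly as in the paper's (very brief) argument.
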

\begin{proof}
	We project from the point $q$ - the quadric surface $Q_q$ is either the union of two planes $P_1\cup P_2\subset \bP^3$, or a double plane. In the first case, $C_q=C_1\cup C_2$, where $C_i\subset P_1$ are plane cubics, whereas in the second $C_q$ is a double cubic. Since $X$ has positive defect, in both cases it follows that $C_q$ contains a line component $L$, and $X$ contains a plane spanned by the lines through $q$ parametrised by $L$.
\end{proof}

\begin{remark}\label{ref remark}
    We note the following alternative proof that $\defect>0$ implies that $X$ contains a plane or a scroll, as suggested by the referee.

    We first note that the singularities of $X$ are terminal, since $X$ is not a cone.
    Since $\sigma(X)>0$, the variety $X$ has a non-trivial $\mathbf Q$-factorialization $X'\to X$.
One applies the classification of possible Mori contractions (see for example \cite{Cut}), for a cubic threefold and sees that
one of the following holds: 
\begin{enumerate}
    \item there is a fibration $X'\to \mathbf P^1$ whose general fiber is a quadric,
    \item there is a birational contraction whose exceptional divisor is a plane,
    \item there is a $\mathbf P^1$-bundle structure $X'\to \mathbf P^2$ so that the pull-back of a line is a cubic scroll.
\end{enumerate}

\end{remark}

\section{Very good lines}\label{lack of v good line}
In this section we relate Theorem \ref{claim} to another geometric property of the cubic threefold $X$, namely the non-existence of a very good line (see Definition \ref{defn: v good}).
In particular, we will prove:
\begin{theorem}\label{thm: v good iff defect=0}
    Let $X$ be a cubic threefold with isolated singularities. Assume $X$ does not have a singularity of type $T_{333}, T_{334}, Q_{10}$, and is not a cone over a cubic surface. Then $X$ has a very good line if and only if $\sigma(X)=0.$
\end{theorem}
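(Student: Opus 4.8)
The plan is to run the conic-bundle description of $X$ and feed it into Theorem~\ref{defect equiv to plane or scroll}. Recall that for a line $\ell\subset X$ missing $\Sing(X)$ and not contained in a plane of $X$, projection away from $\ell$ realizes $\widetilde X=Bl_\ell X$ as a conic bundle $p\colon\widetilde X\to\bP^2$ whose discriminant is a plane quintic $D_\ell\subset\bP^2$ carrying a double cover $\pi\colon\widetilde D_\ell\to D_\ell$ (recording the two lines of each degenerate conic fibre); by definition $\ell$ is a \emph{good line} if $\Sing(D_\ell)$ is in bijection with $\Sing(X)$, and a \emph{very good line} if moreover $\pi$ is étale and $D_\ell$ (equivalently $\widetilde D_\ell$) is irreducible. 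Both implications will be proved by confronting the position of $\ell$ with the surface supplied by Theorem~\ref{defect equiv to plane or scroll}; the Mumford--Prym identification $J(X)\cong\Prym(\widetilde D_\ell/D_\ell)$ of \cite{MR0572974} is present in the background but is \emph{not} what forces $\sigma(X)=0$, since $\sigma(X)$ is in general strictly smaller than the toric rank of $J(X)$ (already for a one-nodal cubic $\sigma(X)=0$ while $J(X)$ is a nontrivial $\bG_m$-extension).

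For $(\Rightarrow)$ I would argue contrapositively. If $\sigma(X)>0$ then by Theorem~\ref{defect equiv to plane or scroll} the cubic $X$ contains a plane $P$ or a rational normal cubic scroll $\Sigma$, and I claim no line of $X$ is very good. For the plane: a line contained in $P$ gives $p$ a two-dimensional fibre over $[P]\in\bP^2$, so it is not even good; a line meeting $P$ in a nonempty finite set projects $P$ onto a line $m\subset\bP^2$ over which every fibre of $p$ contains a line of $P$, so $m\subset D_\ell$ and $D_\ell$ is reducible; and a line disjoint from $P$ maps $P$ isomorphically onto $\bP^2$, so $P$ is a section of $p$, whence $\pi$ has a section and $\widetilde D_\ell$ is disconnected. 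For the scroll: a general line disjoint from $\Sigma$ meets each fibre of $p$ in the three points $\Sigma\cap Q_t$, which on the discriminant curve split as $2+1$ on the two lines of the degenerate conic away from a proper closed subset (no ruling of $\Sigma$ lies in a fibre-plane of $p$, since $\ell\cap\Sigma=\varnothing$ and two lines in a common $\bP^2$ always meet), so choosing the minority line gives a rational section of $\pi$ and again $\widetilde D_\ell$ is disconnected; equivalently, by the proof of Lemma~\ref{lem: An} a cubic scroll forces a presentation $X=V(\det A)$ with $A$ a $3\times 3$ matrix of linear forms, and the two transposed presentations split $\pi$. Lines of $X$ meeting or lying on $\Sigma$ are excluded from being very good by separate, elementary arguments of the same flavour. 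In all cases $\ell$ fails to be very good, so a very good line forces $\sigma(X)=0$.

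For $(\Leftarrow)$, assume $\sigma(X)=0$; by Theorem~\ref{defect equiv to plane or scroll}, $X$ contains neither a plane nor a rational normal cubic scroll. Under the standing hypotheses --- no $T_{333},T_{334},Q_{10}$ singularity and $X$ not a cone --- the construction of \cite{CML09} provides a good line, and concretely a general line avoiding $\Sing(X)$ is good (such lines are plentiful because $\dim F(X)=2$ while the lines through each of the finitely many singular points trace out only a curve), and for a general such line $\pi$ is étale. It remains to see that $\pi$ is nontrivial and $D_\ell$ is irreducible. If $\pi$ were disconnected, the lines of $X$ selected by a section of $\pi$ would sweep out a surface $S\subset X$; if $D_\ell$ were reducible, a component would likewise sweep out a surface $S\subset X$; in either case $S$ is a plane, a rational normal cubic scroll, or (examining how $S$ meets a general hyperplane) a surface whose degree is not a multiple of $3$, so $S$ defines a Weil divisor on $X$ that is not Cartier --- contradicting $\sigma(X)=0$. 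Hence $\ell$ is very good.

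The main obstacle is the genericity input for $(\Leftarrow)$: one must know that a general line is good and that $\pi$ is étale for \emph{every} cubic threefold in the stated class with $\sigma(X)=0$, rather than only for the mildly singular or Hodge-general cubics treated in \cite{CML09,LSV}, and it is exactly here that the exclusion of the corank $3$ singularities $T_{333},T_{334},Q_{10}$ (and of cones) enters. Intertwined with this is the case analysis that turns every degeneration of the pair $(D_\ell,\widetilde D_\ell)$ into a plane or cubic scroll; its geometric heart is the dictionary ``section or odd multisection of the conic bundle $p$ $\;\leftrightarrow\;$ low-degree rational surface in $X$ $\;\leftrightarrow\;$ plane or cubic scroll'', together with the observation that a good line is automatically disjoint from any plane or scroll contained in $X$ (otherwise $D_\ell$ is already reducible). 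The implication $(\Rightarrow)$ is essentially formal once Theorem~\ref{defect equiv to plane or scroll} is in hand.
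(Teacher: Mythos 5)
Your forward direction (a plane or scroll obstructs very good lines) is a reasonable alternative to the paper's route, which instead uses that $F(X)$ is reducible whenever $X$ contains a plane or scroll and then shows $\widetilde{D}_l$ must acquire components from several components of $F(X)$ (Proposition \ref{prop: plane implies no v good line}); your case analysis on the position of $\ell$ relative to $P$ or $\Sigma$ could probably be made to work, though the sub-cases you dismiss as ``separate, elementary arguments of the same flavour'' are not written down. The converse, however, has two genuine gaps. First, the existence and genericity of good lines when $\sigma(X)=0$ is \emph{not} supplied by \cite{CML09}: that paper only produces good lines for allowable singularities or total Milnor number at most $5$, and there are cubics with $\sigma(X)=0$ and non-allowable singularities (e.g.\ an $A_{10}$ point). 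The paper has to prove this afresh: Lemma \ref{lem: F(X) irreducible} shows $F(X)$ is irreducible and reduced when $\sigma(X)=0$ and there is no corank $3$ point (via a birational map to $\Sym^2(C_q)$ or $C_1\times C_2$), and only then does the openness argument of \cite[Lemma 3.9]{CML09} apply (Lemma \ref{good lines}). You flag this as ``the main obstacle'' but do not close it.

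Second, your central dichotomy --- that a component of $D_\ell$ or a section of $\pi$ sweeps out a plane, a scroll, or a surface of degree prime to $3$, hence a non-Cartier Weil divisor --- fails in the key case. If $D_\ell$ contains a line component $L_\ell$, the surface swept out by the corresponding degenerate conics is $\pi^{-1}(L_\ell)=X\cap\langle\ell,L_\ell\rangle$, a \emph{hyperplane section}: it has degree $3$ and is Cartier, so no contradiction with $\sigma(X)=0$ can come from degrees. The paper instead observes that this cubic surface is singular along a line through a singular point of $X$ and that only finitely many such surfaces can exist (Lemma \ref{lem: only finite bad surfaces}), while varying $\ell$ over the open set of good lines would produce infinitely many (Proposition \ref{line component case}). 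Likewise, your parenthetical ``(equivalently $\widetilde D_\ell$)'' in the definition of a very good line is false: $D_\ell$ can be irreducible while the \'etale cover $\widetilde D_\ell$ splits, and ruling this out is the hardest part of the proof (Theorem \ref{claim:quintic irreducible}), requiring the incidence-variety argument of \cite[\S 2]{LSV} adapted to the irreducible $F(X)$; your one-line degree count is a placeholder for the $(k_1,k_2)=(2,3),(1,4)$ analysis of \cite[Lemma 2.13]{LSV}, not a proof. Finally, the conic-plus-cubic case of reducible $D_\ell$ is settled in the paper by matching the singularities of $D_\ell$ with those of $X$ and invoking the classification of \cite{viktorova2023classification} to force $C_q$ reducible --- none of which is visible in your sketch.
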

This proves Theorem \ref{thm: no very good lines}: the excluded singularities are those of corank $3$ for which $\defect=0.$ We outline the structure of this section:
in \S \ref{v good lines} we will recall the necessary definitions. In particular, we recall the existence of good lines for cubic threefolds with $\defect=0$. Next in \S \ref{subsec: plane or scroll=> no v good line} we will prove that a cubic containing a plane or a rational normal cubic scroll does not contain a very good line, proving one direction of Theorem \ref{thm: v good iff defect=0}. The remaining direction is proved in \S \ref{no v good line}: let us outline the strategy. We prove by contradiction, and suppose that $\defect=0$ but none of the good lines $l\subset X$ are very good, i.e $\tD$ is reducible. There are two possibilities: either the plane quintic $D_l=\tD/\tau$ is reducible or irreducible. We first deal with the case that for all good lines $l\subset X$ the curve $D_l$ is reducible, and obtain a contradiction by analysing the singularities of $D_l$ (that are in correspondence with that of $X$). We finish the proof by considering the case that for one good line $l$ the curve $D_l$ is irreducible: we adapt an argument of \cite[\S 2]{LSV} to achieve the desired contradiction.

\subsection{Very good lines}\label{v good lines}
We begin by recalling the definitions.

\begin{definition}\cite[Def 3.4]{CML09},\cite[Def 2.2]{LSV}
     A line $l\subset X$ is \textbf{good} if for any plane $P\subset \bP^4$ containing $l$ and another line, the intersection $P\cap X$ consists of three distinct lines.
\end{definition}

Projecting from a line $l\subset X$ gives a conic bundle structure $f:Bl_lX\rightarrow \bP^2$, where the discriminant curve $D_l\subset \bP^2$ has degree $5$. This curve has a natural double cover $\tD\rightarrow D$, where $\tD$ parametrises the lines in $X$ intersecting $l$. 

We denote by $F(X)$ the Fano variety of lines contained in the cubic threefold $X$ - when $X$ is smooth, this is a smooth surface. For $X$ with isolated singularities, $F(X)$ is singular along lines that pass through a singularity. Note that the curve $\tD\subset F(X).$

\begin{proposition}\cite[Prop 3.6]{CML09}\label{prop: good line sings of D}
    Let $l\subset X$ be a good line, $D_l$ and $\tD$ as above. Then:
    \begin{enumerate}
        \item There exists a natural 1-to-1 correspondence between the singularities of $D_l$ and those of $X$, including the analytic type.
        \item The double cover $\tD\rightarrow D_l$ is \'etale.
    \end{enumerate}
\end{proposition}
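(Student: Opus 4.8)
The plan is to read everything off the conic-bundle structure $f\colon Bl_lX\to\bP^2$ coming from projection from $l$. A point $p\in\bP^2$ is a plane $P_p\supset l$, and $P_p\cap X=l\cup Q_p$ for a conic $Q_p$, which is the fibre of $f$ over $p$; writing the conics as $\{v^{T}M_p v=0\}$ for a family of symmetric $3\times3$ matrices $M_p$, the discriminant $D_l$ is $\{\det M_p=0\}$, and $\tD\to D_l$ is the double cover separating the two rulings of $Q_p$ over $p\in D_l$. This cover is ramified precisely where $Q_p$ is a non-reduced line, equivalently where $\corank M_p\ge2$. The single geometric input I would isolate from the good-line hypothesis is: \emph{for every $p\in D_l$ the conic $Q_p$ is a reduced union of two distinct lines, neither of them $l$}; this is exactly where the definition of a good line enters (it is essentially the content of \cite[Prop.~3.6]{CML09}), since each $p\in D_l$ produces a plane $P_p\supset l$ meeting $X$ in $l$ together with a degenerate conic, and the alternatives $P_p\cap X=P_p$, $\,=2l+l'$, $\,=l+2l'$, $\,=l+l_1+l_1$ all fail to be "three distinct lines". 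Granting this, $\corank M_p=1$ on all of $D_l$, so the ramification locus of $\tD\to D_l$ is empty; being finite, unramified and of degree $2$, it is étale, which is (2). A fortiori $\corank M_p\le2$ everywhere, i.e. no plane of $X$ contains $l$.

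For (1) I would first use that a good line avoids $\Sing(X)$ (see \cite[\S3]{CML09}), so that $Bl_lX\to X$ is an isomorphism near $\Sing(X)$: then $\Sing(Bl_lX)$ is identified, analytically, with $\Sing(X)$, and all of its points lie in genuine conic-bundle fibres of $f$. The heart of the matter is then a local computation at a point $x=([v_0],p_0)$ of $Bl_lX$ lying over $p_0\in D_l$, so that $[v_0]$ is the vertex of the degenerate conic $Q_{p_0}$ and $\corank M_{p_0}=1$. After a holomorphic-in-$p$ change of coordinates on $\bP^2_{[v]}$ we may take $M_p=\mathrm{diag}(1,1,\lambda(p))$ with $\det M_p=(\text{unit})\cdot\lambda$, so $D_l=\{\lambda=0\}$ near $p_0$; in the chart $z=1$ the total space becomes $\{x^2+y^2+\lambda(u,v)=0\}\subset\bA^4$, where $(u,v)$ are local coordinates on $\bP^2$ at $p_0$. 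Two things are then visible: (i) $Bl_lX$ is singular at $x$ if and only if $d\lambda(p_0)=0$, i.e. if and only if $D_l=\{\lambda=0\}$ is singular at $p_0$; and (ii) when singular, the germ $(Bl_lX,x)$ is the suspension $\{x^2+y^2+h(u,v)=0\}$ of the plane-curve germ $(D_l,p_0)=\{h=0\}$. Since the fibres of $f$ over $\bP^2\setminus D_l$ are smooth conics, no singular points of $Bl_lX$ are hidden there, so (i) already yields a bijection of points $\Sing(Bl_lX)\leftrightarrow\Sing(D_l)$.

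It remains to match analytic types. The singularities that occur on cubic threefolds — the $ADE$ types together with $T_{pqr}$, $Q_{10}$, $S_{11}$, $U_{12}$ — are exactly those preserved by the suspension operation $h\mapsto x^2+y^2+h$ (Arnold's normal forms, \cite[Ch.~15]{Arnold2012}): a plane-curve germ of one of these types suspends to a threefold germ of the same name, and conversely. Combined with the analytic identification $\Sing(Bl_lX)\cong\Sing(X)$ from the previous paragraph, this gives the type-preserving bijection $\Sing(X)\leftrightarrow\Sing(D_l)$ asserted in (1).

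I expect the main obstacle to be the local normalisation $M_p=\mathrm{diag}(1,1,\lambda)$ together with the dictionary between Arnold's normal forms on the two sides of the suspension, i.e. verifying that each relevant curve singularity suspends to the threefold singularity bearing the same label. This is also the point at which one implicitly uses that only corank-$\le2$ plane-curve singularities can appear — a cubic threefold with a corank-$3$ singularity admits no good line, since $D_l$ would then need a corank-$3$ plane-curve singularity, which does not exist — consistent with the hypotheses of \Cref{thm: v good iff defect=0}. A secondary subtlety is the fine analysis of the fibre conics over $D_l$ (excluding the non-reduced cases $Q_p=2l_1$ and $Q_p=2l$, and the excess-degeneration case $P_p\subset X$), which is precisely where the full strength of the good-line condition is needed; here I would defer to the careful treatment in \cite[\S3]{CML09}.
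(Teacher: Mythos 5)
The paper gives no proof of this statement: it is imported verbatim from \cite[Prop.~3.6]{CML09}, so the only meaningful comparison is with that reference, and your argument is essentially the one given there --- \'etaleness of $\tD\to D_l$ because the good-line condition forces every degenerate fibre to be a reduced conic of corank one, and the type-preserving bijection of singularities via the local normal form $x^2+y^2+\lambda(u,v)=0$ together with stable equivalence of Arnold's normal forms under suspension. Your reconstruction is correct, and the two inputs you explicitly defer (that a good line misses $\Sing(X)$, and the exclusion of the fibres $2l$, $2l'$ and $P\subset X$) are exactly the preparatory content of \cite[\S 3]{CML09}, so nothing is missing relative to what the paper itself relies on.
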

\begin{remark}
    A cubic threefold with a singularity of corank $3$ does not have a good line - indeed if it did, then the quintic curve $D_l$ would have a singularity of corank 3, which is impossible.
\end{remark}

\begin{definition}\cite[Defn 2.9]{LSV}\label{defn: v good}
    Let $X\subset \bP^4$ be a cubic threefold, and $l\subset X$ a good line. Then $l$ is \textbf{very good} if the curve $\tD$ of lines in $X$ meeting $l$ is irreducible.
\end{definition}

In \cite[\S 3.2]{CML09}, the authors show that a cubic threefold $X$ has a good line if its total Milnor number $\mu_{tot}(X)=\sum_{q\in \Sing(X)}\mu(q)$ is less than or equal to $5$, or if the singularities of $X$ are \textbf{allowable}  (see also \cite[\S 2]{LSV}): recall that a  cubic threefold $X$ has allowable singularities if it has singularities of type $A_k$ with $k\leq 5$ or $D_4$ singularities.

We will first extend this result, and prove existence of good lines for a cubic threefold with $\defect=0$. Notice that there exist cubics with $\defect=0$ but non-allowable singularities. On the other hand, having $\mu_{tot}(X)\leq 5$ or allowable singularities does not imply $\defect=0$. For instance, there exist cubic threefolds containing a plane with exactly four nodes, thus with $\mu_{tot}(X)=4$ and allowable singularities.

We will use the following lemma, which seems to be well-known but we could not locate a reference.

\begin{lemma}\label{lem: F(X) irreducible}
    Let $X$ be a cubic threefold with $\sigma(X)=0,$ and assume there are no singularities of corank 3. Then $F(X)$ is irreducible.
\end{lemma}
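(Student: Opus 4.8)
The strategy is to show that the Fano variety of lines $F(X)$ is connected, and then argue that connectedness together with the hypotheses forces irreducibility. For connectedness, I would use the fact that $F(X)$ carries an ample line bundle and appears naturally inside the intermediate Jacobian picture: recall that for a smooth cubic threefold, $F(X)$ is connected, and the family of Fano schemes $F(X_t)$ over a smoothing $\mathcal{X}\to\Delta$ of $X$ is flat and proper. Since the generic fiber $F(X_t)$ is an irreducible smooth surface of the expected dimension $2$, and $F(X)$ has dimension $2$ (this uses that $X$ has isolated singularities and no corank $3$ point, so that the locus of lines through a singular point is at most a curve, hence does not raise the dimension of $F(X)$ above $2$), one concludes that $F(X)$ is connected by the connectedness of fibers of a proper flat morphism with connected generic fiber (Zariski's connectedness / the fact that $h^0(\mathcal{O}_{F(X_t)})=1$ is constant in a flat family of reduced curves... actually here one wants $\dim H^0(\mathcal{O})$).

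\textbf{From connected to irreducible.} Once $F(X)$ is known to be connected of pure dimension $2$, I would rule out the possibility that it is reducible (or non-reduced) by a cohomological argument. The key input is Theorem \ref{defect}: if $\sigma(X)=0$ then for every singular point $q$ the curve $C_q$ is irreducible (when $\corank(q)\le 1$) or has exactly two components (when $\corank(q)=2$). The Fano surface $F(X)$ is covered by the planes swept out in the projection construction: for each $q\in\Sing(X)$, the lines through $q$ form the curve $C_q\subset F(X)$, while the generic line avoids all singular points and these are parametrised by an open subset that, via Lemma \ref{blow up iso}, is tied to the geometry of $Bl_{C_q}\mathbb{P}^3$. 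I would show that $F(X)\setminus\bigcup_q C_q$ is irreducible — this is the Fano variety of lines on the smooth locus, and irreducibility here can be deduced from the incidence correspondence with $Bl_qX\cong Bl_{C_q}\mathbb{P}^3$ being irreducible. Then any extra component of $F(X)$ would have to be contained in $\bigcup_q C_q$, hence would be $1$-dimensional, contradicting purity of dimension $2$.

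\textbf{Main obstacle.} The hard part will be handling the singular locus of $F(X)$ and the excluded corank $3$ case cleanly: I need to be sure that $\dim F(X)=2$ exactly (no jumping of dimension), which is where the hypothesis ``no corank $3$ singularity'' is essential — a corank $3$ point would parametrise a $2$-dimensional family of lines (a cubic surface of lines), potentially giving $F(X)$ an extra $2$-dimensional component. With corank $\le 2$, Propositions \ref{corank 2} and the $A_n$ analysis show $C_q$ is a curve, so $F(X)$ stays pure of dimension $2$, and the argument closes. A secondary subtlety is whether $F(X)$ could be non-reduced along $C_q$ when $Q_q$ is a double plane; but since we are proving irreducibility (a statement about $F(X)_{\mathrm{red}}$), this does not affect the conclusion. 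I would therefore organise the proof as: (i) $\dim F(X)=2$ using absence of corank $3$ points; (ii) $F(X)$ is connected via the flat family over a smoothing; (iii) the open part parametrising lines through the smooth locus is irreducible, using the blow-up isomorphism of Lemma \ref{blow up iso}; (iv) purity of dimension rules out lower-dimensional components, so $F(X)$ is irreducible.
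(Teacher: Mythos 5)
There is a genuine gap, and it sits exactly where the real content of the lemma lies. Your architecture is: (ii) $F(X)$ is connected, (iii) the open locus of lines avoiding $\Sing(X)$ is irreducible, (iv) no small components. Connectedness is a red herring here — a connected, pure-dimensional surface can perfectly well be reducible (and indeed, for cubics containing a plane, $F(X)$ is connected with three components), so everything rests on step (iii). But step (iii) is only asserted: you say irreducibility of the open part "can be deduced from the incidence correspondence with $Bl_qX\cong Bl_{C_q}\mathbb{P}^3$ being irreducible," and that does not follow — the blow-up is irreducible because $X$ is, which carries no information about the Fano scheme. The paper's proof supplies precisely the missing mechanism: for a line $l$ not through $\Sing(X)$, the plane $\langle q,l\rangle$ meets $X$ in $l\cup l_x\cup l_y$ with $l_x,l_y$ through $q$ (the residual conic is singular at $q$), giving a birational map $F(X)\dashrightarrow \Sym^2(C_q)$ when $\corank(q)\le 1$, and $F(X)\dashrightarrow C_1\times C_2$ when all singularities have corank $2$ (using that $\pi(l)$ meets each plane $P_i$ once). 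Irreducibility of the target — hence of $F(X)$ — is then exactly where $\sigma(X)=0$ enters via Theorem \ref{defect}. You cite the right input (irreducibility of $C_q$) but never build the bridge from $C_q$ to $F(X)$; without some such explicit correspondence the proof does not close.

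A secondary issue: your explanation of why corank $3$ must be excluded is off. A corank $3$ singularity does not give a $2$-dimensional family of lines — that happens only for cones; rather, $Q_q$ is a double plane and $C_q$ is non-reduced, which is what breaks the parametrization by $\Sym^2(C_q)$ (and note $T_{333}$, $T_{334}$, $Q_{10}$ do have $\sigma(X)=0$, so the hypothesis is not subsumed by $\sigma(X)=0$). Also, flatness of the relative Fano scheme over a smoothing, which your connectedness step leans on, is not automatic for a singular central fiber — though since connectedness is not what you need, this matters less.
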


\begin{proof}
    Assume first that at least one singular point $q\in X$ is of corank $1$, and let $C_q$ be the corresponding curve of lines through $q$. Since $\sigma(X)=0,$ by Theorem \ref{defect} the curve $C_q$ is irreducible. We claim $F(X)$ is birational to $\Sym^2(C_q)$, and is thus irreducible. 
    Recall that $F(X)$ is singular at $[m]\in F(X)$ if and only if the line $m\subset X$ passes through a singular point of $X$ (\cite[Cor 1.11]{AltKlei} and\cite[Cor 4]{BVdV}).
    Let $l\in F(X)\setminus \Sing(F(X))$, and consider the plane $H:=\langle q, l\rangle\subset \bP^4$. Note that because $\sigma(X)=0,$ this plane is not contained in $X$ (by applying Lemma \ref{plane => not palindromic}), and we intersect $H\cap X$. Since $X$ is singular at $q$, we have $H\cap X=l\cup l_x\cup l_y$, and $l_x, l_y$ both contain $q$. Thus under the projection $\pi:Bl_qX\rightarrow \bP^3$, both $\pi(l_x)=x$ and $\pi(l_y)=y$ are contained in $C_q$. Thus $\phi([l])=(x,y)$ defines a rational map $\phi:F(X)\dashrightarrow \Sym^2(C_q).$ We construct the rational inverse as follows: let $(x,y)\in \Sym^2(C_q)$ where $x\neq y$. Then there exists lines $l_x, l_y$ parameterised by $x,y\in C_q$ passing through the singular point $q;$ we take the residual line of the intersection $\langle l_x, l_y\rangle \cap X.$

    Now suppose that all singular points are corank $2$, and project from such a singular point $q\in X$. In this case, the quadric $Q_q=P_1\cup P_2$ where $P_i\cong \bP^2$. Since $\sigma(X)=0$, by Theorem \ref{defect} the curve $C_q=C_1\cup C_2$ where $C_i\subset P_i$ is an irreducible cubic curve.

    We claim that $F(X)$ is birational to $C_1\times C_2,$ and is thus irreducible. Let $[l]\in F(X)\setminus \Sing(F(X)),$ and consider the plane $H:=\langle q, l\rangle\subset \bP^4$. Again, intersecting $H\cap X=l\cup l_x\cup l_y$ where $l_x, l_y$ are lines in $X$ passing through $q,$ and are thus parameterised by points $x,y\in C_q.$ Note that $\pi(l)$ is a line in $\bP^3$, and thus intersects each $P_i$ in a single point ($\pi(l)$ cannot be contained in $P_i$ - if it was, by analysing the equation of $X$ it must be a component of the $(2,3)$ intersection curve $C_q$). 

    It follows that exactly one of the points $x,y$ belongs to each component $C_i;$ we assume $x\in C_1,$ $y\in C_2.$ Thus $\phi([l])=(x,y)$ defines a rational map $$\phi: F(X)\dashrightarrow C_1\times C_2.$$
    We construct the rational inverse: let $(x,y)\in C_1\times C_2$, with $x\neq y$. Then there exists lines $l_x, l_y$ parameterised by $x,y\in C_1\cup C_2$ passing through the singular point $q$; we take the residual line of the intersection $\langle l_x, l_y\rangle \cap X.$
\end{proof}
\begin{remark}\label{remark: F(X) reducible}
    The Fano variety of lines has been studied in the case of $\sigma(X)>0.$ For example, in \cite{flops} the authors describe the components of $F(X)$ explicitly for a general cubic threefold containing a rational normal cubic scroll (see also \cite{dolgnodal}); in particular there are 3 components. The birational geometry of a Fano variety for a cubic threefold containing a plane appears in \cite[\S 3.3]{MR1738983}, and in detail in \cite{CBrooke} over fields of characteristic not 2. Again, there are three components.
\end{remark}

It follows that the general line on a cubic threefold with $\defect=0$ is a good line. More precisely:

\begin{lemma}\label{good lines}
    Let $X$ be a cubic threefold with $\sigma(X)=0$, and assume there are no singularities of corank $3$. Then the set of good lines $U\subset F(X)$ is a (Zariski) open subset of $F(X).$
\end{lemma}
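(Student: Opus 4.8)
The plan is to show that being a good line is a Zariski-open condition on $F(X)$, and then combine this with the irreducibility of $F(X)$ from Lemma \ref{lem: F(X) irreducible} to conclude that the good locus $U$ is nonempty and open. The statement to prove is only openness, so the core of the argument is a constructibility/semicontinuity argument; irreducibility of $F(X)$ is what guarantees $U$ is actually dense (and in particular nonempty), which is the point we really want downstream.

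First I would recall the definition: $l\subset X$ is good if for every plane $P$ containing $l$ and at least one further line of $X$, the intersection $P\cap X$ is three distinct lines. The natural approach is to work on the incidence variety. Consider the locus $Z\subset F(X)$ of \emph{bad} lines, i.e. lines $l$ for which there exists a plane $P\supset l$ with $P\cap X\supset l\cup l'$ for some line $l'$, but $P\cap X$ is \emph{not} three distinct lines (equivalently $P\cap X = 2l_1 + l_2$ or $3l$, i.e. has a repeated component, or equivalently is non-reduced). I would set up the flag-type incidence scheme $\mathcal{I}=\{(l, l')\in F(X)\times F(X): l\cap l'\neq\emptyset\}$ together with the induced family of planes $P=\langle l, l'\rangle$ (defined away from the diagonal, where $l=l'$; the diagonal needs separate, easy handling or can be included by working with the span $\mathrm{span}(l\cup l')$ suitably compactified), and the residual-cubic/residual-line construction: for $(l,l')$ generic, $P\cap X = l\cup l'\cup l''$ with $l''$ the residual line, giving a morphism to $F(X)$; the bad locus inside $\mathcal I$ is where two of $l,l',l''$ coincide, which is a closed condition (vanishing of a discriminant of the binary form cutting out $P\cap X$ inside the pencil-of-lines structure on $P$). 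This shows the bad locus in $\mathcal I$ is closed, hence constructible, and its image $Z$ under the proper (since $F(X)$ is projective) projection $\mathcal{I}\to F(X)$ is closed. Therefore $U = F(X)\setminus Z$ is open.

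The one subtlety I would be careful about is the clause "containing $l$ and another line": a plane $P\supset l$ that meets $X$ only in $l$ plus a conic (no second line) does not disqualify $l$, so I cannot simply take $Z$ to be the image of "all planes through $l$ whose intersection with $X$ is non-reduced". This is exactly why I phrase $Z$ via the incidence $\mathcal I$ of \emph{pairs} of incident lines: by construction every plane $P$ that enters the definition already contains two lines $l, l'$, so it is automatically of the form $\langle l, l'\rangle$ with $(l,l')\in\mathcal I$, and the condition to check is only whether the third component is distinct. I would also note the edge case where $l$ itself passes through a singular point $q$ of $X$: then $F(X)$ is singular at $[l]$, but this causes no problem since openness of $U$ does not require smoothness of $F(X)$, only that $Z$ be closed.

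Finally, to upgrade "open" to "nonempty dense open" — which is the content actually used in the sequel — I would invoke Lemma \ref{lem: F(X) irreducible}: since $\sigma(X)=0$ and $X$ has no corank $3$ singularity, $F(X)$ is irreducible, so it suffices to exhibit a single good line. For this I would point to the existing literature: \cite[\S 3.2]{CML09} produces a good line whenever $\mu_{tot}(X)\le 5$ or the singularities are allowable, and more to the point, the generic line on an irreducible $F(X)$ avoids the finitely many singular points of $X$ and the finitely many "special" planes, so genericity combined with irreducibility gives a good line directly; alternatively one checks that the complement $Z$ cannot be all of $F(X)$ because a general plane through a general line meets $X$ transversally in three distinct lines. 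The main obstacle, as flagged, is purely bookkeeping: setting up the incidence variety $\mathcal I$ and the residual-line morphism carefully enough that "two of the three lines coincide" is visibly a closed condition, handling the diagonal $l=l'$ (where the span jumps) without breaking properness of the projection to $F(X)$.
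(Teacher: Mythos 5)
Your proposal is correct and takes essentially the same route as the paper: the published proof merely records that $F(X)$ is reduced and irreducible by Lemma~\ref{lem: F(X) irreducible} and then defers the openness argument to \cite[Lemma 3.9]{CML09}, which is precisely the incidence-variety/residual-line/discriminant construction you sketch (including the need to treat the diagonal and the non-reduced degenerations with care). The one factual slip is in your optional density discussion, where the planes $P\supset l$ with $P\cap X$ non-reduced generally form a positive-dimensional family rather than a finite set; this does not affect the openness statement actually claimed in the lemma, since density follows from irreducibility of $F(X)$ together with the bad locus being a proper closed subset.
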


\begin{proof}
   Since $\sigma(X)=0,$ the Fano variety of lines $F(X)$ is reduced and irreducible by Lemma \ref{lem: F(X) irreducible}. The argument follows as in \cite[Lemma 3.9]{CML09}.
\end{proof}

\subsection{Containing a plane or a cubic scroll}\label{subsec: plane or scroll=> no v good line}
In this subsection, we will show that a cubic threefold containing a plane or a rational normal cubic scroll (and thus by Theorem 
 \ref{defect equiv to plane or scroll} $\defect>0$) does not contain a very good line.
This proves one direction of Theorem \ref{thm: v good iff defect=0}.

\begin{proposition}\label{prop: plane implies no v good line}
    Suppose that $X$ contains a plane or a rational normal cubic scroll. Then for every good line $l\in X$, the corresponding  curve $\widetilde{D}_l$ is reducible. In particular, $X$ does not contain a very good line.
\end{proposition}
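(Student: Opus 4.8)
The plan is to exploit the $1$-to-$1$ correspondence between lines in $X$ meeting a good line $l$ and the curve $\widetilde{D}_l\subset F(X)$, together with the fact that $\widetilde{D}_l$ is an ample (or at least movable) curve class on the Fano surface-type locus, and show that the presence of a plane or cubic scroll forces $\widetilde{D}_l$ to break into pieces. Concretely, I would argue as follows. Suppose first $X$ contains a plane $P$; then the lines of $X$ contained in $P$ form a $2$-dimensional family (a $\bP^2{}^*$ worth of lines), which sits inside $F(X)$ as a distinguished component $F_P\cong (\bP^2)^*$. For any good line $l$, the curve $\widetilde{D}_l$ of lines meeting $l$ must intersect $F_P$ (a line $l$ not contained in $P$ meets the plane $P$ in a point, and through that point of $P$ there is a pencil of lines in $P$, all of which meet $l$), so $\widetilde{D}_l\cap F_P\neq\emptyset$. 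On the other hand, $F(X)$ is reducible (Remark \ref{remark: F(X) reducible}), with $F_P$ one of its several components; since $\widetilde{D}_l$ is a connected curve meeting at least two distinct irreducible components of $F(X)$ — it has points in $F_P$ and also points corresponding to lines meeting $l$ but not lying in $P$, e.g. lines through a singular point off $P$ — it cannot be irreducible. The case of a cubic scroll $\Sigma\subset X$ is handled identically: $\Sigma$ contributes a distinguished ruling-family of lines forming a component $F_\Sigma$ of $F(X)$, and any good line $l$ meets $\Sigma$ (a surface in $\bP^4$) in at least one point, through which passes the ruling line of $\Sigma$, so again $\widetilde{D}_l\cap F_\Sigma\neq\emptyset$ while $\widetilde{D}_l$ also carries lines outside $\Sigma$.

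\textbf{Key steps, in order.} (1) Recall from Remark \ref{remark: F(X) reducible} and the cited references \cite{flops}, \cite{MR1738983}, \cite{CBrooke} the explicit component structure of $F(X)$ when $X$ contains a plane (three components, one being $(\bP^2)^*$) or a cubic scroll (three components, one a $\bP^1$ or $\bP^2$ parametrising the ruling / determinantal data). (2) Show every good line $l$ gives a curve $\widetilde{D}_l$ that meets the distinguished component $F_P$ (resp. $F_\Sigma$): this is the incidence computation that $l$, being a line in $\bP^4$, meets the plane $P$ (resp. the surface $\Sigma$), and through a point of $P$ lies a whole pencil of lines of $X$, all incident to $l$. (3) Show $\widetilde{D}_l$ is not entirely contained in $F_P$ (resp. $F_\Sigma$): otherwise every line meeting $l$ would lie in $P$, which is false — e.g. $l$ itself lies in some plane $\langle l, l'\rangle\cap X = l\cup l'\cup l''$ with $l',l''\not\subset P$ for generic choices, or more robustly, $\widetilde{D}_l$ has the wrong degree/genus to sit inside $(\bP^2)^*$. (4) Conclude: a connected curve ($\widetilde{D}_l$ is connected, being an étale double cover base or by the conic-bundle discriminant description) that meets but is not contained in an irreducible component of the reduced scheme $F(X)$ must be reducible. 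Hence no good line is very good, by Definition \ref{defn: v good}.

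\textbf{Main obstacle.} The delicate point is step (3)–(4): ensuring $\widetilde{D}_l$ genuinely spreads across at least two components of $F(X)$ rather than, a priori, being an irreducible curve lying in the (possibly singular) intersection locus of two components, or lying entirely in the distinguished component. I expect this to require either (a) a degree or arithmetic-genus count — $\widetilde{D}_l$ has a known class in $F(X)$ (it is cut out by an incidence condition, of degree matching the quintic discriminant $D_l$), which is incompatible with being an irreducible curve in $(\bP^2)^*$ or in a $\bP^1$-factor — or (b) a direct geometric argument producing, for any good $l$, at least one line meeting $l$ and not lying on $P$ (resp. $\Sigma$), e.g. a line through a singular point of $X$ not on $P$, using that $X$ is not a cone and that good lines by definition avoid degenerate configurations. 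Alternatively, one can invoke that $\widetilde{D}_l$ is connected (étale double cover of the connected quintic $D_l$, Proposition \ref{prop: good line sings of D}) and that an irreducible $\widetilde{D}_l$ would force $F(X)$ itself to be irreducible near $\widetilde{D}_l$, contradicting the explicit reducibility recalled in step (1); I would phrase the final argument this way to keep it clean.
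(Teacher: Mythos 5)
Your argument hinges on the incidence claim in step (2): that a good line $l$ must meet the plane $P$ (resp.\ the scroll $\Sigma$). This is not automatic and is false in general: $l$ and $P$ are a line and a surface in $\bP^4$, where the expected dimension of their intersection is $1+2-4<0$, so they are generically disjoint. Nor can you intersect them inside the threefold $X$, because $P$ is exactly a Weil divisor that is not $\bQ$-Cartier (this is the source of the positive defect), so there is no intersection-theoretic reason for a curve in $X$ to meet it; for instance $X=V(x_0q_0+x_1q_1)\supset P=V(x_0,x_1)$ contains lines disjoint from $P$. Everything downstream (the pencil of lines of $P$ through $l\cap P$, hence a positive-dimensional piece of $\widetilde{D}_l$ inside $F_P$) depends on this unproved point. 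Two further problems: for the scroll, the lines contained in $\Sigma$ (the ruling plus the exceptional line) form only a one-dimensional family, so they do not constitute a component of the surface $F(X)$ --- the three components described in \cite{flops} are different two-dimensional families --- and there is no ``distinguished component $F_\Sigma$'' in your sense. Finally, your fallback via connectedness does not work: $\widetilde{D}_l\to D_l$ is an \'etale double cover, so connectedness of $\widetilde{D}_l$ cannot be taken as given (its failure is one of the ways reducibility occurs); and even granting connectedness, an irreducible curve lying in one component of $F(X)$ can still touch another component at a point of their intersection, so ``meets $F_P$ but is not contained in it'' does not imply reducibility.

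The paper's proof takes a different and more robust route that never localizes on the plane or scroll. It uses only that $F(X)=\bigcup F_i$ is reducible (Remark \ref{remark: F(X) reducible}), shows $\widetilde{D}_l\not\subset\Sing(F(X))$ so that an irreducible $\widetilde{D}_l$ would lie in a single component $F_1$, and then for each $i\neq1$ runs a dichotomy on the union $Z_i$ of the lines parametrized by $F_i$: if $Z_i=X$, then through every point of $l$ there is an $F_i$-line, which already produces a curve of $\widetilde{D}_l$ inside $F_i$ with no incidence hypothesis at all; if $Z_i$ is a surface, upper semicontinuity of fiber dimension for $\bL_i\to Z_i$ yields a one-parameter family of $F_i$-lines through any point of $Z_i\cap l$. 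To salvage your approach you would have to prove the incidence statement that a good line meets every plane (or surface swept out by a component of $F(X)$) contained in $X$; that is the genuinely delicate sub-case, not a formality about lines and planes.
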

\begin{proof}
    Since $X$ contains a plane or a cubic scroll, the Fano variety of lines is reducible (see Remark \ref{remark: F(X) reducible}). Let $F(X):=\cup F_i$, where each $F_i$ is an irreducible surface. 
    First we will show that $\widetilde{D}_l$ is not contained in $\Sing(F(X))$. Recall that by definition,
    $$\widetilde{D}_l:=\{[l']\in F(X)| l'\cap l\neq \varnothing\}.$$ Suppose $\widetilde{D}_l\subset \Sing(F(X));$ then every line $[l']\in \widetilde{D}_l$ passes through a singular point of $X$. Under the projection $f:Bl_lX\rightarrow \bP^2$, the pairs of lines $l', \tau(l')$ (where here $\tau$ is the covering involution associated to $\widetilde{D}_l\rightarrow D_l)$ map to a point on the discriminant curve $D_l$. 
    Since $l$ is a good line, the singularities of $D_l$ are in one-to-one correspondence with those of $X$, hence the image under projection of a singular point $q$ of $X$ is a singular point. Since every line in the fibers parametrised by $D_l$ passes through a singular point, every point of $D_l$ is singular - this is a contradiction to $X$ having isolated singularities.

Assume now that $\widetilde{D}_l$ is irreducible. Since $\tD\not\subset \Sing(X)$, it follows that $\widetilde{D}_l$ is contained in a unique component of $F(X),$ say $F_1$. For $i\neq 1$, let
$$Z_i:=\{x\in X : \exists [l']\in F_i \text{ with } x\in l'\}.$$
 There are two possibilities, either $Z_i$ covers $X,$ or $Z_i$ is a surface contained in $X$. If $Z_i$ covers $X$, then for every point $x\in l$ there exists a line $[l']\in F_i$ intersecting $l$ at $x$ - it follows that $\widetilde{D}_l$ has a component contained in $F_i$, and is thus reducible.

 Now assume that $Z_i\subset X$ is a surface. Let $p:\bL\rightarrow F(X)$ be the universal line over $F(X)$, with projection $q:\bL\rightarrow X$. Note that $p$ is a $\bP^1$-bundle over $F(X)$, and $q$ is generically finite.
 Let $p_i:\bL_i\rightarrow F_i$, $q_i:\bL_i\rightarrow Z_i$ be the restrictions to the component $F_i$.
 Then $\bL_i$ is a $\bP^1$ bundle over the surface $F_i$, and $q_i$ is a dominant map to the surface $Z_i$. In particular, the generic fiber of $q_i$ is $1$-dimensional. 
 Since $Z_i\subset X\subset \bP^4$, the intersection of the good line $l$ and $Z_i$ is non empty; pick $z\in Z_i\cap l$, and consider the fiber $q_i^{-1}(z).$ 
 Since the dimension of the fibers of $q_i$ is an upper-semi continuous function, $\dim q_i^{-1}(z)\geq 1.$ In particular, there is a $1-$parameter family of lines contained in $F_i$ that pass through $z$, parametrised by a curve $C_z\subset F_i$. Since $z\in l$, the curve $C_z
\subset F_i$ is a component of $\widetilde{D}_l$, and $\widetilde{D}_l$ is reducible.
\end{proof}

\subsection{Assuming non-existence of a very good line}\label{no v good line}
In order to complete the proof of Lemma \ref{thm: v good iff defect=0}, it remains to prove the converse: if $X$ does not have a very good line, then $\defect>0$. Note that if $X$ does not have a good line, then $\sigma(X)>0$ by Lemma \ref{good lines}, or $X$ has a corank 3 singularity: we thus assume the existence of a good line in what follows. Further, a cubic threefold has a very good line as long as it appears as a hyperplane section of a general cubic fourfold \cite[Proposition 2.10]{LSV}. In particular, any cubic threefold with $\mu(X) \leq 5$ has a very good line. Thus we can assume that $X$ has $\mu(X)\geq 6$.

Suppose that $l\subset X$ is a good line, but is not very good. Then by definition the associated curve
$\widetilde{D}_l\subset F(X)$ has multiple components, but the discriminant curve $D_l:=\widetilde{D}_l/\tau\subset \bP^2$ can be either reducible or irreducible. Theorem \ref{thm: v good iff defect=0} will then follow from Theorem \ref{claim: quintic reducible}, which considers the case where $D_l$ is reducible, and Theorem \ref{claim:quintic irreducible}, which considers the case where $D_l$ is irreducible.

\begin{theorem}\label{claim: quintic reducible}
    Let $X$ be a cubic fourfold with isolated singularities such that there exists at least one good line (in particular, $X$ has no corank 3 singularity).
    Suppose that for all good lines $l\in X$, the discriminant curve $D_l$ is reducible (in particular, $\mu(X)\geq 6)$. Then $X$ contains a plane or a cubic scroll.
\end{theorem}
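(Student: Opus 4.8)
The plan is to argue by contradiction, exploiting the one-to-one correspondence between the singularities of $D_l$ and those of $X$ (Proposition \ref{prop: good line sings of D}) together with the classification of singular plane quintics. Since $D_l$ is a plane quintic, reducibility means $D_l = D' \cup D''$ where $\deg D' + \deg D'' = 5$, so the possible splittings of the degree are $1+4$, $2+3$, $1+1+3$, $1+2+2$, and so on, down to a union of five lines. The preimages under the conic bundle projection $f: Bl_l X \to \bP^2$ of these components will produce surfaces in $X$, and the bulk of the work is to show that one of these surfaces is forced to be a plane or a rational normal cubic scroll. Concretely, a line component $D' \subset D_l$ gives a one-parameter family of lines in $X$ meeting $l$, all lying over $D'$; sweeping out this family produces a surface $\Sigma' \subset X$ ruled over $\bP^1$, and one must identify its degree and check it is one of the two allowed surfaces (a plane, if the family is a pencil of lines through a common point, or a scroll).

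The key steps, in order: First, I would recall that since $l$ is a good line, $D_l$ is reduced (the double cover $\widetilde D_l \to D_l$ is \'etale by Proposition \ref{prop: good line sings of D}(2), which forces $D_l$ reduced), so a reducible $D_l$ is genuinely a union of distinct curves of smaller degree. Second, for each component $D_i \subset D_l$, I would analyze the surface $\Sigma_i \subset X$ swept out by the lines parametrized by the preimage of $D_i$ in $\widetilde D_l \subset F(X)$; using that $X \subset \bP^4$ and that these lines all meet the fixed line $l$, one controls $\deg \Sigma_i$ and $\operatorname{span}(\Sigma_i)$. Third, I would push the constraint that the singularities of $X$ distribute among the $D_i$ according to the correspondence: because $\mu(X) \geq 6$, the quintic $D_l$ is quite singular, and a plane curve of low degree can only carry so many singularities (e.g. a conic is smooth or a pair of lines; a cubic has at most one node or cusp if irreducible, etc.), so the singularities must be spread across several components, forcing at least one component to be a line or to be such that the associated surface degenerates. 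Fourth, I would invoke the structural results already used in \S\ref{defect=plane/scroll}: a pencil of lines through a point sweeps a plane, and a nondegenerate one-parameter family of lines of the right degree sweeps a rational normal cubic scroll (cf. the arguments in Lemmas \ref{lem: An}, \ref{lem: corank 2} and the references \cite{flops}, \cite{dolgnodal}).

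I expect the main obstacle to be the case analysis when $D_l$ splits as a conic plus a cubic, or a line plus a quartic, with the quartic or cubic component irreducible but very singular: here one must rule out the possibility that the swept surfaces are all of degree $\geq 4$ and nondegenerate (hence neither planes nor cubic scrolls), and this requires a careful count matching $\mu(X)$ against the maximal number of singularities an irreducible quartic (three) or cubic (one) can have, combined with the genus/adjunction constraints on where $\widetilde D_l$ can be singular. A secondary subtlety is ensuring the swept-out surface $\Sigma_i$ is genuinely two-dimensional and irreducible (not all of $X$, and not a lower-dimensional locus) — this is handled exactly as in the fiber-dimension argument of Proposition \ref{prop: plane implies no v good line}, via upper semicontinuity of fiber dimension of the evaluation map from the universal line. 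Once every reducible configuration of $D_l$ is shown to contain a line component or a "pencil-type" component or a twisted-cubic-type component, the corresponding plane or cubic scroll in $X$ follows, contradicting (via Lemma \ref{plane => not palindromic} and Theorem \ref{defect equiv to plane or scroll}) nothing — rather, it directly establishes the conclusion of the theorem, so in fact no contradiction is needed: the statement is proved outright once the component analysis is complete.
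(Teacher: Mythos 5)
Your opening moves match the paper's (argue via the singularity correspondence of Proposition \ref{prop: good line sings of D} and decompose $D_l$ into components), but the two steps that carry the actual weight are missing, and the substitutes you propose do not work. First, the surface swept out by the lines lying over a component $D_i\subset D_l$ is the preimage $\pi^{-1}(D_i)$ under the conic-bundle projection, which has degree $3\deg D_i$; for a line component $L_l$ it is a cubic surface $S_l$ contained in the hyperplane $\langle l, L_l\rangle\cong\bP^3$, hence degenerate in $\bP^4$ and therefore \emph{not} a rational normal cubic scroll, and generically not a plane either. So "identify the swept surface as a plane or a scroll" fails already in the line-component case. What the paper actually does (Proposition \ref{line component case}) is show that if $S_l$ is irreducible it must be singular along a line passing through a singular point of $X$, and then uses the hypothesis that \emph{every} good line has this property to produce infinitely many such cubic surfaces, contradicting Lemma \ref{lem: only finite bad surfaces} (which requires $F(X)$ irreducible, i.e. $\sigma(X)=0$). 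Note that the quantifier "for all good lines" in the statement is essential to this step; your proposal analyzes a single good line and never uses it.

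Second, and more seriously, your plan for the case with no line component --- $D_l$ a smooth conic plus an irreducible cubic --- cannot succeed as described. A smooth conic meeting an irreducible cubic transversally gives $D_l$ exactly six nodes, which is entirely consistent with $\mu(X)\geq 6$; there is no line component, no pencil-type component, and the two swept surfaces have degrees $6$ and $9$, so your singularity count and your "sweep out a plane or scroll" mechanism both come up empty. The paper resolves this case by a genuinely different route: it transfers the combination of singularities from $D_l$ to $X$, projects from a singular point $q\in X$, and invokes the classification results of \cite{viktorova2023classification} (together with the exclusion of $D_n$, $n>8$) to show that the curve $C_q$ is reducible in every arising configuration, whence $\sigma(X)>0$ by Theorem \ref{defect} and $X$ contains a plane or scroll by the already-established Theorem \ref{defect equiv to plane or scroll}. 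Your sketch never routes through the defect, which is the actual engine of the paper's argument; without it (or a substitute of comparable strength) the conic-plus-cubic case remains open and the proof is incomplete.
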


In order to prove Theorem \ref{claim: quintic reducible}, there are two situations to consider: 
\begin{enumerate}
        \item for every good line $l$, the curve $D_l$ contains a line component $L_l\subset D_l$; or
        \item for at least one good line the curve $D_l$ is the union of a smooth conic and an irreducible plane cubic.
    \end{enumerate}

\begin{proposition} \label{line component case}
Let $X$ be as in Theorem \ref{claim: quintic reducible}.
    Suppose that for every good line $l\subset X$, the quintic $D_l$ contains a line component $L_l.$ Then $\sigma(X)>0.$
\end{proposition}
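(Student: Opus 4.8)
The plan is to show directly that $X$ contains a plane, so that $\sigma(X)>0$ follows from Lemma \ref{plane => not palindromic}. Fix any good line $l\subset X$; by hypothesis the discriminant quintic $D_l\subset\bP^2$ has an irreducible component $L_l$ which is a line. It is enough to carry this out for one good line.

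First I would translate the presence of the line component into the conic bundle $f\colon Bl_lX\to\bP^2$. A point of $\bP^2$ is a plane $P\supseteq l$ in $\bP^4$ with $P\cap X=l\cup C_P$ for the residual conic $C_P$; over $D_l$ the conic $C_P$ degenerates to a pair of lines, and over a general point of $L_l$ to a pair of \emph{distinct} lines. Since $L_l$ is a line in $\bP^2$, the corresponding planes $\{P_t\}_{t\in L_l}$ form a pencil and sweep out a hyperplane $H^*\subset\bP^4$ containing $l$. Because $\tD\to D_l$ is \'etale (Proposition \ref{prop: good line sings of D}) and $L_l\cong\bP^1$ is simply connected, the induced double cover over $L_l$ is trivial, $\widetilde L'\sqcup\widetilde L''$ with each summand a copy of $\bP^1$ exchanged by $\tau$. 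These parametrise two pencils of lines $\{l_t\}$ and $\{l_t''\}$ on $X$, each line meeting $l$, with $l_t\neq l\neq l_t''$ and $\langle l,l_t\rangle=\langle l,l_t''\rangle=P_t$, $\,P_t\cap X=l\cup l_t\cup l_t''$. Set $S:=\overline{\bigcup_t l_t}$ and $S'':=\overline{\bigcup_t l_t''}$; these are irreducible subvarieties of $X$, and both are $2$-dimensional, since if, say, $\{l_t\}$ were constant $=l_0$ then every $P_t$ would contain both $l$ and $l_0$, which is impossible for a pencil of planes.

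Now $S,S''\subseteq H^*\cap X$, a cubic surface, so each of $S,S''$ is an irreducible component of $H^*\cap X$; hence $\deg S\le 2$ unless $S=S''=H^*\cap X$. (Equivalently one may use $[f^{-1}(L_l)]=h-E$ on $Bl_lX$, where $h$ is the pullback of the hyperplane class of $X$ and $E$ the exceptional divisor, together with $h^2\cdot(h-E)=h^3=\deg X=3$.) If $\deg S=1$ then $S$ is a plane in $X$. If $\deg S=2$ then $S$ spans a $\bP^3=:\Pi$ and $X\cap\Pi$ is a cubic surface containing the quadric $S$, so $X\cap\Pi=S\cup(\text{plane})$ and $X$ contains a plane; likewise for $S''$. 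The remaining case is that $H^*\cap X$ is a reduced irreducible cubic surface ruled by the lines $l_t$ (if it is non-reduced or reducible it already contains a plane), hence either a cone over a plane cubic $\Gamma_0$ with vertex $v$, or a non-normal cubic surface with a double line $\delta$. In the cone case: if $\Gamma_0$ is reducible, the cone over a line component of $\Gamma_0$ is a plane in $X$; if $\Gamma_0$ is irreducible, the nonconstant family $\{l_t\}$ of rulings gives a nonconstant, hence surjective, morphism $\bP^1\to\Gamma_0$, and since the only lines on the cone are rulings while $l\subseteq H^*\cap X$, we get $l=l_t$ for some $t$, contradicting $l_t\neq l$. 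In the double-line case the only lines on $H^*\cap X$ are the rulings and $\delta$, so $l\subseteq H^*\cap X$ and $l\neq l_t$ force $l=\delta$; but then a general plane $P$ with $l\subset P\subset H^*$ satisfies $P\cap X=P\cap(H^*\cap X)=2l+(\text{a line})$, so $l$ is not a good line, a contradiction. In every surviving case $X$ contains a plane, and $\sigma(X)>0$.

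I expect the final step to be the main obstacle: the classification of the reduced irreducible cubic surface $H^*\cap X$ into the two listed shapes, together with the use of the goodness of $l$ (and of the nonconstancy of the pencils $\{l_t\},\{l_t''\}$) to discard both of them. A smaller technical point is to make the bound $\deg S+\deg S''\le 3$ precise; the cleanest route is the elementary observation that these surfaces are components of the cubic surface $H^*\cap X$, with the intersection computation on $Bl_lX\to\bP^2$ as a back-up that also records which, if any, components are contracted by $Bl_lX\to X$.
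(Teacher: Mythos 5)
Your strategy is genuinely different from the paper's: the paper assumes $\sigma(X)=0$, uses Lemma~\ref{lem: F(X) irreducible} and Lemma~\ref{good lines} to produce infinitely many good lines, attaches to each one a cubic surface $S_l\subset X$ singular along a line passing through a singular point of $X$, and contradicts the finiteness statement of Lemma~\ref{lem: only finite bad surfaces}; you instead work with a single good line and try to exhibit a plane in $X$ directly. Most of your reduction is sound (the splitting of the \'etale cover over $L_l\cong\bP^1$, the two pencils $\{l_t\},\{l_t''\}$, the degree bookkeeping inside the cubic surface $H^*\cap X$, and the cone case). The gap is in the non-normal case: it is \emph{not} true that the only lines on a reduced irreducible non-normal, non-cone cubic surface are the rulings and the double line $\delta$. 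For instance $T=V(x_0x_1^2-x_3x_2^2)$ is singular exactly along $\delta=V(x_1,x_2)$, its rulings are the lines $V(\alpha x_1+\beta x_2,\ \beta^2x_0-\alpha^2x_3)$, and yet $T$ also contains the line $V(x_0,x_3)$ (the image of the directrix of the cubic scroll under projection), which is neither a ruling nor $\delta$. Consequently the deduction ``$l\subseteq H^*\cap X$ and $l\neq l_t$ force $l=\delta$'' is unjustified, and your contradiction via $P\cap X=2l+(\text{line})$ does not cover this residual possibility.

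The missing case can be closed within your framework. If $l$ is any line of $H^*\cap X$ that is not a ruling, then every ruling $r=l_t$ is coplanar with $l$ (both lie in $P_t$), hence meets it, so $r\mapsto\langle l,r\rangle$ defines a non-constant morphism $\psi$ from the $\bP^1$ parametrising rulings to $L_l\cong\bP^1$, of which $t\mapsto l_t$ and $t\mapsto l_t''$ are both sections; multiplicativity of degrees forces $\psi$ to be an isomorphism, whence $l_t=l_t''$ for all $t$, contradicting goodness. (Equivalently, the induced double cover over $L_l$ would be a degree-two self-map of $\bP^1$, which is always ramified by Riemann--Hurwitz, contradicting Proposition~\ref{prop: good line sings of D}(2).) With this repair your argument works; it is local to one good line and yields the stronger conclusion that $X$ contains a plane, whereas the paper's proof genuinely uses the hypothesis ``for every good line'' to manufacture infinitely many singular cubic surfaces.
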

\begin{proof}
    Assume that $\sigma(X)=0.$ Then $F(X)$ is irreducible and reduced by Lemma \ref{lem: F(X) irreducible}, and the set of good lines is open in $F(X).$ For each good line $l\subset X$, let $D_l=L_l\cup \Gamma_l$, where $L_l$ is the line component, and $\Gamma_l$ is a (possibly reducible) quartic. 

    Denote by $S_l:=\pi^{-1}(L_l)$ the preimage in $X$ of the line component under the projection map $\pi:Bl_lX\rightarrow \bP^2$. Then $S_l$ is a cubic surface in $X$ that contains a 1-parameter family of (pairs) of lines that meet the given line $l$. If $S_l$ is reducible for some $l$, then $X$ contains a plane. Thus we assume $S_l$ is irreducible for every choice of good line $l$. 

    For every plane $P$ that contains $l$, the intersection $P\cap S_l$ has a residual singular conic. The singular point moves in $S$ -- it follows by Bertini that $\Sing(S_l)=M'_l$ is a line. 
    This line $M'_l\subset S_l\subset X$ necessarily passes through a singular point of $X$; indeed, $L_l$ intersects the quartic $\Gamma_l$ in at least one point, whose preimage is a singular point of $X$ contained in $M'_l.$ It follows that $X$ contains an infinite number of cubic surfaces that are singular along a line passing through a singular point of $X$. This is a contradiction by Lemma \ref{lem: only finite bad surfaces} below.
\end{proof}

\begin{lemma}\label{lem: only finite bad surfaces}
    Let $X$ be a cubic threefold with singularities of corank $\leq 2.$ Assume $\sigma(X)=0.$ Then $X$ contains only finitely many cubic surfaces which are singular along a line.
\end{lemma}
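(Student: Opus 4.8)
The plan is to show that a cubic surface $S\subset X$ that is singular along a line $M$ imposes enough constraints to force either a plane component (contradicting $\sigma(X)=0$ via Lemma \ref{plane => not palindromic}) or to pin down $S$ from the singular data of $X$, leaving only finitely many possibilities. First I would analyze the local structure: if $S$ is a reduced irreducible cubic surface singular along a line $M$, then $S$ is a ruled (in fact rational) cubic scroll, so $S$ is either the cone over a plane cubic or, more relevantly, $S$ is swept out by the pencil of planes through $M$, each such plane meeting $S$ in $M$ (with multiplicity $2$) plus a residual line. Thus $S$ is determined by the pair $(M,$ and the curve traced by these residual lines$)$. If instead $S$ is reducible, one of its components is a plane, which is excluded. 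So we may assume $S$ is an irreducible cubic scroll singular along $M$.

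Next I would observe that $M$ must pass through a singular point of $X$: since $S\subset X$ is singular along $M$ while $X$ itself has only isolated singularities, a general point of $M$ is a smooth point of $X$ but a singular point of $S$, which is consistent; however, $S$ being a cubic scroll of the stated type means its directrix line (or the vertex behaviour) interacts with $\Sing(X)$. More carefully: projecting from a singular point $q\in X$ of corank $\leq 2$ as in \S\ref{subsec:project}, the cubic surfaces in $X$ singular along a line correspond, via Lemma \ref{blow up iso}, to particular configurations in $Bl_{C_q}\bP^3$ — specifically to planes in $\bP^3$ (if $M$ passes through $q$) or to components/structures tied to the curve $C_q$. Since $\sigma(X)=0$, Theorem \ref{defect} tells us $C_q$ is irreducible (corank $1$) or a union of two irreducible cubics (corank $2$), so there is no room for a line or conic component in $C_q$; this rigidity is what should bound the count.

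The cleanest route is likely: (i) show each such $S$ corresponds to a line $M_S\subset X$ through a point of $\Sing(X)$, together with the requirement that the plane $\langle M_S, \cdot\rangle$ sweeps residual conics degenerating appropriately — equivalently $S$ is the union of the trisecant lines to a suitable space curve; (ii) parametrize: the line $M_S$ lies in the Fano variety $F(X)$, and since it passes through a singular point, $[M_S]\in \Sing(F(X))$, which by \cite{AltKlei,BVdV} is a curve (indeed maps to $C_q$ or its components under projection); (iii) for each choice of $[M_S]$ in this curve, the cubic scroll $S$ singular along $M_S$ and contained in $X$ is then unique (or belongs to a finite set), because the residual-line data is forced by $X\cap(\text{plane through }M_S)$ having a prescribed singular conic — a closed condition cutting the parameter curve down to finitely many points.

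The main obstacle I expect is step (iii): ruling out a positive-dimensional family of cubic scrolls $S$ singular along a \emph{varying} line $M_S$. The danger is that as $[M_S]$ moves in the curve $\Sing(F(X))$, the associated scroll $S$ could vary continuously, producing infinitely many such surfaces. To exclude this I would argue that a $1$-parameter family of cubic surfaces singular along lines, all contained in the fixed threefold $X$, would sweep out a divisor (or all of $X$) and force the generic such $S$ to deform inside $X$; but a cubic scroll in $\bP^4$ spans $\bP^4$ and is nondegenerate of degree $3$, and $X$ being a cubic hypersurface, the family of degree-$3$ surfaces on $X$ is controlled — ultimately one shows the corresponding subvariety of the Hilbert scheme $\Hilb^{3t+1}(X)$ (or $\Hilb^{3t}(X)$ for the cone case) is zero-dimensional by a dimension count against $h^0(\calO_{\bP^4}(1))=5$ and the constraint that the singular line lies in the finite-plus-curve locus $\Sing(F(X))\cup\{\text{lines through }\Sing X\}$, whose generic point supports only finitely many such scrolls. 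Alternatively, and perhaps more robustly, if the family were infinite then for a \emph{general} good line $l$ (which exists and moves in an open subset of $F(X)$ by Lemma \ref{good lines}) one of these scrolls would be exactly the $S_l$ of Proposition \ref{line component case}, and infinitely many distinct $S_l$ would force, by the incidence/Bertini argument there, the singular locus of $X$ to be non-isolated — the contradiction we want. I would write the proof along this last line, as it dovetails directly with how the lemma is invoked.
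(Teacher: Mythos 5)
Your proposal does not close the argument. The paper's own proof is short: it invokes \cite[Lemma 2.6]{LSV}, observing that the only global input that proof needs is the irreducibility of $F(X)$ (supplied here by Lemma \ref{lem: F(X) irreducible} since $\sigma(X)=0$ and all singularities have corank $\leq 2$), and that the contradiction reached there is that the curve $C_q$ would have to be nonreduced --- impossible outside corank $3$. Your sketch instead tries to rebuild the finiteness statement from scratch, and the decisive step --- ruling out a positive-dimensional family of cubic surfaces singular along a varying line --- is exactly the point you flag as ``the main obstacle'' and never actually carry out. The Hilbert-scheme dimension count is only gestured at, and the assertion in your step (i)--(ii) that the singular line $M_S$ must pass through a point of $\Sing(X)$ is not justified for an arbitrary such surface (it is established in Proposition \ref{line component case} only for the specific surfaces $S_l$ arising from a line component of $D_l$, using the intersection of $L_l$ with $\Gamma_l$).

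More seriously, the route you say you would actually write up is circular: you propose to derive the contradiction ``by the incidence/Bertini argument'' of Proposition \ref{line component case}, but that proposition's proof terminates precisely by citing the present lemma. You cannot use it here. To repair the argument you would need an independent finiteness mechanism; the one in \cite[Lemma 2.6]{LSV} shows that an infinite family of such surfaces forces, after projection from a singular point, a nonreduced structure on $C_q$, which is excluded when $\corank(q)\leq 2$ since $Q_q$ is then not a double plane. Either cite that argument and check it only uses irreducibility of $F(X)$, or supply the dimension count you allude to in full.
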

\begin{proof}
This is essentially \cite[Lemma 2.6]{LSV}
 -- their proof only uses the fact that $F(X)$ is irreducible, and thus holds for $X$. 
    The conclusion of the proof of the lemma is that this forces the curve $C_q$ to be nonreduced, which is true only in the corank 3 case. Thus the result holds.
\end{proof}

\begin{proof}[Proof of Theorem \ref{claim: quintic reducible}]
    Assume that $X$ does not contain a plane or a cubic scroll. In particular, we assume $\sigma(X)=0.$
    By Proposition \ref{line component case}, it suffices to consider the situation when there exists at least one good line $l\subset X$ such that $D_l$ is the union of an irreducible cubic $T_l$ and a smooth conic $Q_l$. We will analyse the singularities of $D_l$ and use Theorem \ref{defect} to conclude that the defect of $X$ should be positive, leading to a contradiction.

    Since the component $T_l$ of $D_l$ is irreducible, it is either smooth or has one singularity. If $T_l$ is smooth, then $D_l$ has only $A_n$ singularities occuring as the six intersection points of $T_l$ and $Q_l$ (counted with multiplicities). The combination of $A_n$ singularities is determined by a partition of six into a sum: there are eleven possibilities. In the case of $6$  $A_1$ singularities (i.e. the partition 1+1+1+1+1+1), $X$ contains a cubic scroll or a plane, since the singularities of $X$ are the same as those of $D_l$ by Proposition \ref{prop: good line sings of D}. In the remaining cases, the proofs of \cite[Propositions 4.6-4.9]{viktorova2023classification}) imply that the curve $C_q$ associated to any singularity $q$ of $X$ is reducible. By Theorem \ref{defect}, it follows that $\sigma(X)>0,$ a contradiction.
    
    Next we assume that $T_l$ has a node. If the smooth conic $Q_l$ does not pass through the node, then $X$ has a combination of $A_n$ singularities again, and we proceed as above. If $Q_l$ passes through the node, then the corresponding singularity of $D_l$ has type either $D_4, D_6, D_8, D_{10}$ or $D_{12}$. Singularities of types $D_{n}$ with $n>8$ do not occur on cubic threefolds by \cite[Theorem I]{viktorova2023classification}, and thus $D_{10}$ or $D_{12}$ do not occur on $D_l$. Analysing the possible combinations of singularities on $X$ following \cite[Section 3.2]{viktorova2023classification}, we see immediately that $\defect>0$ in all the cases in question.
    
    Finally, we can assume that $T_l$ has a cusp and $Q_l$ passes through the cusp. If the resulting singularity is $ADE$, then it is of $D_5$, $D_7$ or $E_7$ type. The case by case analysis again follows from \cite[Section 3.2]{viktorova2023classification} and leads to the conclusion that $\sigma(X)>0$. If the resulting singularity of $D_l$ is non-$ADE$ of corank 2, then projection from the corresponding singularity $q\in X$ results in a reducible curve $C_q$ with at least three components, also implying $\sigma(X)>0$. Corank 3 singularities are impossible since we are considering a plane curve.
\end{proof}
 Finally, we finish the proof of Theorem \ref{thm: v good iff defect=0} by considering the case when the quintic plane curve $D_l$ is irreducible.
\begin{theorem}\label{claim:quintic irreducible}
   Let $X$ be a cubic threefold with isolated singularities of corank $\leq 2.$ Suppose that for every good line $l\in X$ the curve $\widetilde{D}_l$ is reducible. Then $\sigma(X)>0$.
\end{theorem}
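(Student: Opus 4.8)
The plan is to argue by contradiction: assume $\sigma(X) = 0$, so that by Lemma \ref{lem: F(X) irreducible} the Fano surface $F(X)$ is irreducible, and by Lemma \ref{good lines} the good lines form a dense open $U \subset F(X)$. The hypothesis says that for every good line $l$, the curve $\widetilde{D}_l$ is reducible, while (having already dealt with the reducible-$D_l$ case in Theorem \ref{claim: quintic reducible}) we may assume $D_l$ is irreducible. Since $\widetilde{D}_l \to D_l$ is an étale double cover of an irreducible curve by Proposition \ref{prop: good line sings of D}, $\widetilde{D}_l$ reducible means the cover is disconnected, i.e. \emph{trivial}: $\widetilde{D}_l = D_l^{(1)} \sqcup D_l^{(2)}$, two disjoint copies of the normalization-level data of $D_l$ swapped by $\tau$. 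So the real content is: if $D_l$ is irreducible for some good line $l$ but its étale double cover is trivial for \emph{every} good line, then $\sigma(X) > 0$.

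First I would recall the cohomological/Prym interpretation. For a smooth cubic threefold and a general line, Mumford's construction identifies the intermediate Jacobian $J(X)$ with the Prym variety $\mathrm{Prym}(\widetilde{D}_l/D_l)$, and this persists in families; the triviality of the connected étale double cover forces the associated Prym variety (or its limit) to degenerate — concretely, a disconnected double cover $\widetilde{D}_l \sqcup \widetilde{D}_l \to D_l$ has Prym variety that is not an abelian variety of the expected dimension, indicating the intermediate Jacobian of $X$ degenerates completely. The cleanest route, following \cite[\S 2]{LSV}, is to translate "$\widetilde{D}_l$ disconnected for all good $l$" into a statement purely about $F(X)$ and the incidence correspondence of lines, and derive a contradiction with $F(X)$ being irreducible. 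Specifically: $\widetilde{D}_l$ disconnected means there is a way, for each good $l$, to partition the lines meeting $l$ into two $\tau$-swapped halves consistently; as $l$ varies in the irreducible surface $U$, one would show these partitions glue to give a nontrivial decomposition or a nontrivial étale double cover of (an open subset of) $F(X) \times F(X)$ restricted to the incidence locus, and then run the Lefschetz-type/monodromy argument of \cite[Lemma 2.6, Lemma 2.7]{LSV} to conclude that the universal such cover is connected when $F(X)$ is irreducible — contradiction.

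Concretely the steps I would carry out are: (1) Reduce, using Proposition \ref{prop: good line sings of D}(2) and irreducibility of $D_l$, to the statement that the étale double cover $\widetilde{D}_l \to D_l$ is trivial for every good line $l$; note $\widetilde D_l$ then has exactly two components, each mapping isomorphically to $D_l$. (2) Set up the universal picture: let $\bL \to F(X)$ be the universal line, and consider the incidence variety $I = \{(l, l') : l \cap l' \neq \varnothing\} \subset F(X) \times F(X)$, with its involution swapping factors; for fixed good $l$, the fiber of $I \to F(X)$ over $l$ is (an open subset of) $\widetilde{D}_l$. (3) Show that a consistent choice of "first component" $D_l^{(1)} \subset \widetilde{D}_l$ for $l$ in a dense open of $F(X)$ produces a section, over that open set, of a two-sheeted cover of $I$ — equivalently a rational map $F(X) \dashrightarrow$ (Hilbert scheme / Chow variety of curves) splitting the universal $\widetilde{D}$. (4) Adapt \cite[\S 2]{LSV}: use that $F(X)$ is irreducible (Lemma \ref{lem: F(X) irreducible}) together with the connectedness of the generic fiber behaviour of the incidence correspondence to show such a splitting cannot exist — the monodromy of the family $\{\widetilde{D}_l\}_{l}$ acts transitively on the two components of a single $\widetilde{D}_l$, so a global consistent splitting is impossible. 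This yields the contradiction, hence $\sigma(X) > 0$.

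The main obstacle I expect is Step (4): making the monodromy/connectedness argument of \cite{LSV} go through when $X$ is singular (rather than smooth, as in the original), given that $F(X)$, although irreducible by Lemma \ref{lem: F(X) irreducible}, is singular precisely along lines through singular points of $X$, and the curves $\widetilde{D}_l$ may meet $\Sing F(X)$. One must either work on the smooth locus (using that good lines are generic and the relevant incidence locus avoids the bad locus generically, which requires a dimension count using $\mu(X) \geq 6$ and the corank $\leq 2$ hypothesis) or normalize and track how the double cover behaves under normalization. A secondary subtlety is ensuring that "reducible $\widetilde D_l$ for \emph{every} good line" — an a priori family-wide hypothesis — can legitimately be upgraded to the uniform "trivial cover" statement needed for the monodromy argument; this uses that the number of components of $\widetilde D_l$ is lower semicontinuous on $U$ together with Proposition \ref{prop: good line sings of D}(1) to rule out jumps in singularity type.
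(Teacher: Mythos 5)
Your overall skeleton matches the paper's: reduce to the case where $D_l$ is irreducible for some good line (the reducible case being Theorem \ref{claim: quintic reducible}), pass to the incidence correspondence of pairs of intersecting lines, and adapt \cite[\S 2]{LSV} using Lemma \ref{lem: F(X) irreducible} to show the total incidence space $(P\times_XP)\setminus\Delta_P$ is simultaneously reducible and irreducible. However, both halves of that contradiction are left unjustified at exactly the points where the work lies. For the reducibility half, you assert that a "consistent choice of first component" over a dense open of $F(X)$ produces a global splitting, but you never say why such a consistent choice exists; the paper obtains it from \cite[Lemma 2.12]{LSV}, where the essential point is that the two components $\tD^1,\tD^2$ have distinct degrees $(k_1,k_2)\in\{(1,4),(2,3)\}$ over $P$ (their total degree $5$ is odd), so monodromy over the irreducible family of good lines cannot interchange them and the total space genuinely decomposes as $\calC_1\cup\calC_2$. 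This directly conflicts with your proposed mechanism for the irreducibility half, namely that "the monodromy of the family acts transitively on the two components of a single $\tD$": that statement is false here precisely because of the degree obstruction, so a proof resting on it would collapse.

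The paper's actual route to irreducibility uses three inputs you omit: (i) every component of $(P\times_XP)\setminus\Delta_P$ dominates $P$ under $pr_2$ (\cite[Lemma 2.14, Claim 2.15]{LSV}); (ii) the residual-line involution $i$, whose restriction to a fiber $\tD$ is the covering involution $\tau$, has irreducible quotient because $D_l=\tD/\tau$ is irreducible (\cite[Lemma 2.16]{LSV}), so there are at most two components and they are exchanged by $i$; and (iii) the geometric analysis of \cite[Lemma 2.13]{LSV}, which rules out components of degree $(1,4)$ or $(2,3)$ over $P$. Your citations to \cite[Lemmas 2.6, 2.7]{LSV} point at the wrong statements, and the Prym-degeneration discussion plays no role in the argument. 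The concerns you raise about the singular locus of $F(X)$ and about semicontinuity of the number of components are reasonable but secondary; without the degree bookkeeping and the involution/quotient step the proof does not close.
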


\begin{proof}
    It suffices to assume that $D_l\subset\bP^2$ is an irreducible quintic plane curve for some choice of good line $l\subset X$. Indeed, if no such good line exists, then $D_l$ is reducible for all good lines and we are in the set up of Theorem \ref{claim: quintic reducible}. 

    Let $l\subset X$ be such a good line as above. Since $\widetilde{D}_l\rightarrow D_l$ is an \'etale double cover, $\widetilde{D}_l=\widetilde{D}_l^1\cup \widetilde{D}_l^2$. Assume for a contradiction that $\sigma(X)=0.$ This implies $F(X)$ is irreducible and reduced by Lemma \ref{lem: F(X) irreducible}. We follow the strategy of \cite[\S 2]{LSV}: almost all of the arguments work with the assumption that $F(X)$ is irreducible and reduced.
    
    Consider the incidence variety $P\subset F(X)\times X$, which is a $\bP^1$ bundle $p:P\rightarrow F(X).$
    Since $F(X)$ is irreducible and reduced, so is $P$. 
    Note also that the degree of $q:P\rightarrow X$ is $6$ as for a smooth cubic threefold, and the degree of $pr_2:(P\times_X P)\setminus \Delta_P\rightarrow P$ is $5$. We first claim that the assumption of $\tD$ being reducible implies that  $(P\times_X P)\setminus \Delta_X$ has two irreducible components dominating $X$.
   Indeed, this is \cite[Lemma 2.12]{LSV}; the two components are given as $$\calC_1=\bigcup_{[l]\in F(X)}\tD^1 \text{ and }\calC_2=\bigcup_{[l]\in F(X)}\tD^2.$$

Next, we will show that $(P\times_XP)\setminus\Delta_P$ is irreducible, giving a contradiction as in \cite[\S 2]{LSV}.
First, we note that every component $Z$ of $(P\times_XP)\setminus\Delta_P$ dominates $P$ by the second projection map $pr_2$. In \cite[Lemma 2.14]{LSV}, the authors claim that the only possibility for $Z$ not to dominate $P$ is if there is a curve $W\subset X$ such that for every $x\in W,$ there is a curve $C_x$ of lines passing through $x$. This is impossible by \cite[Claim 2.15]{LSV}: there are only finitely many such points $x\in X.$

Recall that we have the map
\[\begin{tikzcd}
    P\times_XP\arrow[r, "pr_2"]& P\arrow[r, "p"]& F(X)
\end{tikzcd}\]
and the fiber over $[l]\in F(X)$ is identified with $\tD.$ Consider the involution $i$ of $P\times_XP\setminus \Delta_P$ mapping $(l_1, l_2,x)$ where $l_1\cap l_2=x$ to $(l_3, l_2, z)$ where $l_3$ is the residual line of $\langle l_1, l_2\rangle \cap X,$ and $z=l_3\cap l_2.$ When restricted to $\tD,$ this is the involution $\tau$ giving the double cover $\tD\rightarrow D_l.$ 
By the proof of \cite[Lemma 2.16]{LSV}, since $D_l=\tD/\tau=\tD/i$ is irreducible, then so is $(P\times_XP)\setminus \Delta_P/i$. It follows that $(P\times_XP)\setminus \Delta_P$ has at most two components that are exchanged under the involution.

We have shown that ($P\times_XP)\setminus \Delta_P=\calC_1\cup\calC_2$, where $\calC_1=\bigcup_{[l]\in F(X)}\tD^1$ and $\calC_2=\bigcup_{[l]\in F(X)}\tD^2$. Let $k_1, k_2$ be the degrees of $pr_2:\calC_i\rightarrow P$; we have either $(k_1,k_2)=(2,3), (1,4)$ since these are the only possible degrees of the components $(\tD^1, \tD^2)$ over $l$. To reach the contradiction, one applies the same arguments as in the proof of \cite[Lemma 2.13]{LSV}, which culminates in a contradiction of the possibility of the degrees $(k_1,k_2)$ -- we omit the details.

Thus assuming $\tD$ is reducible for all good lines $l$, we have shown that if $\sigma(X)=0$ then $P\times_X P\setminus \Delta_P$ is both reducible and irreducible - a contradiction. Thus $\sigma(X)>0.$

\end{proof}

\bibliographystyle{alpha}
\bibliography{bibliography}

\end{document}